\documentclass{article}
\usepackage[utf8]{inputenc}
\usepackage{latexsym}
\usepackage{amssymb,amsmath}
\usepackage[amsmath,amsthm,thmmarks]{ntheorem}
\usepackage{xcolor}
%\usepackage{refcheck}

%\renewtheoremstyle{nonumberplain}%
%
%{\item[\normalfont\bfseries\hskip\labelsep ##1.]}%
%    {\item[\normalfont\bfseries\hskip\labelsep ##1\ (##3).]}

\theoremstyle{plain}
\newtheorem{theorem}{Theorem}[section]
\newtheorem{prop}[theorem]{Proposition}
\newtheorem{lemma}[theorem]{Lemma}
\newtheorem{cor}[theorem]{Corollary}
\theoremstyle{definition}
\newtheorem{defi}[theorem]{Definition}

\newtheorem{ex}[theorem]{Example}

%          environment name
{\left(\begin{smallmatrix}}%            begin code
{\end{smallmatrix}\right)}%             end code

\newenvironment{smallbmatrix}%          environment name
{\left[\begin{smallmatrix}}%            begin code
{\end{smallmatrix}\right]}%             end code

\newcommand{\Rc}{\mathcal{R}}

\newcommand{\dom}{{\rm dom\,}}

\newcommand{\Span}{{\rm span\,}}
\newcommand{\diag}{{\rm diag\,}}

\DeclareMathOperator{\mul}{mul}
\DeclareMathOperator{\rank}{rank}

%--------------------------------------------------------------------------------------
%
%     GENERAL DEFINITIONS
%
%--------------------------------------------------------------------------------------
% Things I cannot remember

% Numbers
    % Reelle Zahlen
\newcommand{\C}{\ensuremath{\mathbb C}}    % Komplexe Zahlen
\newcommand{\N}{\ensuremath{\mathbb N}}    % Nat"urliche Zahlen
    % Ganze Zahlen
    % Irgendein Koerper

% Inner product stuff

               % Nur mit package stmaryrd.sty
               % Nur mit package stmaryrd.sty
 % Nur mit package stmaryrd.sty
        % Nur mit package stmaryrd.sty

% Letters

\newcommand{\calR}{\mathcal R}

  % For scr-font package "mathrsfs" is needed

\newcommand{\la}{\lambda}

% Matrices and vectors

% Not in italic mode
\renewcommand{\Im}{\operatorname{Im}}
\renewcommand{\Re}{\operatorname{Re}}

% Spectra

% Arrows

% Misc

% Only here
\newcommand{\Rcc}{\Rc_{\rm c}}

%--------------------------------------------------------------------------------------
%--------------------------------------------------------------------------------------

\title{On characteristic invariants of matrix pencils and linear relations}
%under finite rank perturbations}

\author{H.\ Gernandt\thanks{Institut f\"{u}r  Mathematik, 
Sekretariat MA 4--5, Technische Universit\"{a}t Berlin, Stra\ss e des 17.\ Juni 136, D-10623 Berlin, Germany,  ({\tt gernandt@math.tu-berlin.de}).}
\and F.\ Mart\'{\i}nez Per\'{\i}a\thanks{
Centro de Matem\'atica de La Plata -- Facultad de Ciencias Exactas, Universidad Nacional de La Plata, C.C.\ 172, (1900) La Plata,  and Instituto Argentino de Matem\'{a}tica ``Alberto P. Calder\'{o}n'' (CONICET), Saavedra 15 (1083) Buenos Aires, Argentina ({\tt francisco@mate.unlp.edu.ar}).}
\and F.\ Philipp\thanks{Institute for Mathematics,  Technische Universit\"{a}t Ilmenau, Postfach 10 05 65, D-98684 Ilmenau, Germany
({\tt friedrich.philipp@tu-ilmenau.de} and
{\tt carsten.trunk@tu-ilmenau.de}).} \and C.\ Trunk$^\ddagger$}

\date{\today}

\usepackage{graphicx}
 \usepackage{xcolor}

\begin{document}

\maketitle

\begin{abstract}
The relationship between linear relations and matrix pencils is investigated. Given a linear relation, we introduce its Weyr
characteristic. If the linear relation is the range (or the kernel) representation of a given matrix pencil, we show that there is a correspondence between this characteristic and the Kronecker canonical form of the pencil. This relationship is exploited to obtain estimations on the invariant characteristics of matrix pencils under rank one perturbations.
%The relationship between linear relations and matrix pencils is investigated. Given a linear relation, we introduce its Weyr
%characteristic  and show that there is a close correspondence
 %between this characteristic and the Kronecker canonical form of the associated matrix pencil. This relationship is exploited for the investigation of matrix pencils under rank one perturbations.
\end{abstract}

\section{Introduction}%\label{1}
Canonical forms for square matrices like the Jordan normal form are a fundamental tool in matrix analysis and uniquely describe the matrix up to similarity transformations: Two matrices are similar if their Jordan normal forms coincide. This is the case if and only if 
they share the same eigenvalues and in
these eigenvalues the Segre or the Weyr characteristic coincide.
These two characteristics count the Jordan chains at an eigenvalue in two different
ways.

The Weyr characteristic (for a given eigenvalue $\lambda$) is a finite, non-increasing sequence $w$ of natural numbers, the first entry of $w$ corresponds  to the dimension of the kernel $N(A-\lambda)$ of $A-\lambda$, the second to the dimension of the quotient space $N (A-\lambda)^2/N (A-\lambda) $, etc. The
Segre characteristic is the conjugate partition of the
 Weyr characteristic; it counts
 the dimensions of the Jordan blocks corresponding to the eigenvalue $\lambda$ or, what is the same,
it counts the length of the corresponding Jordan chains
 \cite{LS09,M33,S15,S99}.
 
 Weyr  and Segre characteristics exist also for matrix pencils. 
%Here we will introduce the Weyr characteristic also for
 %subspaces in $\mathbb C^m\times \mathbb C^m$, which is new
 %(Section \ref{sec:weyr}),
 %and study their relationship.
 Recall that matrix pencils allow a normal form: the 
Kronecker canonical form, see Appendix \ref{sec:kron}. Instead of one Segre characteristic as above,
four different types of indices appear: the finite and the infinite 
elementary divisors, the row minimal indices and the column minimal indices.
These indices measure the sizes of the different blocks in the
Kronecker canonical form. 
Moreover, two matrix pencils are strictly equivalent if and only if all the four indices coincide, see \cite{G59}, or e.g. \cite{BergTren12,BergTren13}. Thus they can be considered as Segre-type
indices for matrix pencils.
As in the case of matrices, where the Segre and the Weyr characteristics are conjugate partitions,  we define here the Weyr characteristic of a matrix pencil as the conjugate partition of the
four Segre-type indices, see Section \ref{sec:examples}.
We cite 
\cite{DD07}  and  \cite{KK86} where the terms Segre and
Weyr characteristics were used for (regular) matrix pencils.

Parallel to the Weyr characteristic of matrix pencils, we introduce the Weyr characteristic of subspaces in the cartesian product
$\mathbb C^m\times \mathbb C^m$ in Section \ref{sec:weyr}. These subspaces  are usually called
linear relations in  $\mathbb C^m$ \cite{Arens,c}. Different from the matrix
situation, it is well-known that linear relations contain not 
only Jordan chains but also \emph{chains at the eigenvalue $\infty$, singular chains,} and \emph{multi-shifts}, see \cite{BSTW20,BSTW21,BTW16,SandDeSn05,SandDeSn07}. 
A recent result   states that every linear relation
in a finite dimensional space allows a direct sum 
decomposition into four parts which are related to the
above chains \cite{BSTW21}. We use this decomposition and define in Section
\ref{sec:weyr} the Weyr characteristic of a linear relation.
As in the case of matrices and matrix pencils, it turns out
that the so defined Weyr characteristic 
is a set of invariants  of a linear relation in the 
following sense (cf.\  Section \ref{sec:weyr_kernel}): Given two linear relations in $\C^m$, their Weyr
characteristics coincide if and only 
if the linear relations are strictly equivalent.
Concerning this notion, we refer to Section \ref{sec:relations}.

The main goal of this paper is to reveal 
the connection between the  Weyr characteristic
of matrix pencils and the 
Weyr characteristic of linear relations. The connection is established
by the kernel and range representations of a matrix pencil, see also \cite{BTW16,LMPPTW18}.
In order to introduce those representations, we consider a matrix pencil of
the form
\begin{equation*}\label{ArnstadtSued}
s E-F, \quad \mbox{with}\quad  E,F\in\C^{n\times m}
\end{equation*}
If $E$ is non-singular, multiplication of the pencil from the left and the right with the inverse of $E$ leads to the pencils
\begin{equation*}%\label{inverses}
    s I -E^{-1} F\qquad \mbox{and} \qquad s I-FE^{-1},
\end{equation*}
which shows that the spectrum of the original pencil coincides with those of the matrices $E^{-1}F$ and $FE^{-1}$.
%There are two possibilities to
%multiply \eqref{ArnstadtSued} with the inverse of $E$ in the sense of linear relations. 
%
%Hence, from \eqref{ArnstadtSued}, we obtain
%\begin{equation*}%\label{inverses}
    %\lambda I -E^{-1} F\quad \mbox{or} \quad \lambda I-FE^{-1},
%\end{equation*}
%where $E^{-1} F$ and $FE^{-1}$ are given by
However, in general the matrix $E$ is singular. But its graph is a linear relation, and 
the inverse of a linear relation always exists. Hence, we consider the linear relations
\begin{equation*}%\label{inverses2}
E^{-1} F = \left\{(x,y)
\in\C^{m}\times \C^m : Fx=Ey\right\}
\quad \mbox{and} \quad
FE^{-1}= \left\{ (Ex,Fx) : x\in \C^m\right\},
\end{equation*}
which were already studied in~\cite{BennByer01,BB06}. 
We
call $E^{-1} F$ the {\em kernel} and $FE^{-1}$ the {\em range} representation
(see also \cite{BTW16})
as we have
$$
E^{-1} F=N[F,-E]\qquad \mbox{ and }\qquad 
FE^{-1}=R\begin{bmatrix}E\\ F\end{bmatrix}.
$$

It is natural to compare the Weyr characteristic
of a matrix pencil  and the Weyr characteristic of the corresponding
kernel and range representations. For a given matrix pencil $sE-F$ with 
nontrivial row minimal indices and/or column minimal indices
a certain peculiarity happens: depending on the  representation (either $E^{-1}F$ or $FE^{-1}$) only the 
row minimal indices and/or column minimal indices
which are large enough are visible in the linear relation. 
We provide two examples for this in Section \ref{sec:examples}.
Nevertheless, we show 
%in Sections  \ref{sec:weyr_kernel}\,\&\,\ref{sec:weyr_range} 
how to calculate the Weyr characteristic of the corresponding
kernel and range representations from the Kronecker canonical form of a given matrix pencil. 

%But also,  considering the sizes of the matrices $E$ and $F$, it 
%is possible to read off the Kronecker form of a pencil
%$s E -F$ from the Weyr characteristic either of the
%kernel or of the range representation.

%This is our main finding: I
Also, considering the sizes of the matrices $E$ and $F$, it is possible to recover
the Weyr characteristic of the matrix pencil $sE-F$ from the 
Weyr characteristic of its kernel or its range representation,
%and vice versa
 see Sections~\ref{sec:weyr_kernel} and \ref{sec:weyr_range}. Hence,
one can associate 
a linear object (the kernel or the range representation) to a 
matrix pencil. This is extremely helpful, for instance, in perturbation 
theory or stability, see e.g. \cite{Gap,Wafa}.
As a consequence, new perturbation results for the Kronecker form of matrix pencils under rank one perturbations can be derived from perturbation results for the Weyr characteristic of linear relations given in \cite{LMPPTW18}. These results can then be used to describe the controllability indices of the associated descriptor systems, see \cite{ChenResp21}.
In Section~\ref{sec:pert} below we give an application to the rank one perturbation theory for matrix pencils where both (kernel and range) representations are used.

\section{Linear relations in $\mathbb C^{m}$} \label{sec:relations}

Throughout this paper elements (pairs) from $\C^{m}\times \mathbb C^m$ will be denoted by $(x,y)$, where $x,y\in \C^{m}$. A {\em linear relation} in $\mathbb C^{m}$ is a linear subspace of $\mathbb C^{m}\times \mathbb C^m$. A matrix $A\in\mathbb C^{m\times m}$ can be identified with a linear relation in $\mathbb C^{m}$ via its
graph:
\begin{align*}
\Gamma(A):=\left\{ (x,Ax):\ x\in \C^m\right\}.
\end{align*}
For the basic notions and properties of linear relations we refer to \cite{Arens,DS2}, and for the more advanced notions see \cite{BSTW20,SandDeSn05,SandDeSn07}. Here, we denote the domain and the range of a linear relation $S$ in $X$ by $\dom S$ and $R(S)$, respectively,
\begin{align*}
 \dom S &= \left\{ x\in \C^m \ :\  (x,y) \in S  \ \text{for some $y\in\C^m$} \right\} \quad \mbox{and}\\
 R(S)&=\left\{ y\in \C^m \ :\   (x,y) \in S \ \text{for some $x\in\C^m$} \right\}.
\end{align*}
Furthermore, $N(S)$ and $\mul(S)$ denote the \textit{kernel} and the
\textit{multivalued} part of $S$,
\begin{align*}
 N(S)=\left\{ x\in \C^m \;:\; (x,0) \in S \right\} \quad \mbox{and} \quad
\mul(S)=\left\{y\in \C^m \;:\; (0,y)\in S \right\}.
\end{align*}
Obviously, a linear relation $S$ is the graph of a matrix if and only if $\mul (S)=\{0\}$.

Given a linear relation $S$ and $\lambda\in\C$, one defines $\lambda S =\left\{ (x,\lambda y) \ :\  (x,y)\in S \right\}$.
For relations $S_1$ and $S_2$ in $\C^m$ the {\em operator-like sum} $S_1+S_2$ is the relation defined by
\begin{align*}
S_1+S_2 = \left\{ (x,y+z) \ : \  (x,y) \in S_1,  \  (x,z) \in S_2 \right\}.
\end{align*}
If $S_1\cap S_2=\{0\}$ then the \emph{direct sum} is given by
\begin{align*}
S_1\oplus S_2:=\{(x_1+x_2,y_1+y_2)\ : \ (x_i,y_i)\in S_i,\; i=1,2 \}.
\end{align*}
The product of the linear relations $S_1$ and $S_2$ in $\mathbb C^m$ is the linear relation in $\mathbb C^m$ defined by
\begin{equation*}%\label{Produkti}
S_1S_2 = \left\{ (x, z)\ :\ (y, z) \in S_1 \mbox{ and } (x,y) \in S_2 \;\mbox{ for some } y \in \C^m\right\}
\end{equation*}
As for matrices, the sum and product of linear relations are associative operations.

The inverse $S^{-1}$ of a linear relation $S$ in $\mathbb C^{m}$ always exists, and it is the linear relation in $\mathbb C^m$ given by
\begin{equation*}%\label{Dammtor}
S^{-1} =\left\{ (y,x)  \ :\  (x,y)\in S \right\}.
\end{equation*}

In the following, if $M$ is a matrix in $\mathbb C^{m\times m}$ and $S$ is a linear relation in $\C^m$, we denote by $\left[\begin{smallmatrix}M&0\\0&M\end{smallmatrix}\right]\cdot S$ the action of $M$ onto $S$:
\begin{align*}
\begin{bmatrix}M&0\\0&M\end{bmatrix}\cdot S:= \{
     (Mx, My) \in \mathbb C^{m}\times\C^m : (x,y)\in S
     \}.
\end{align*}
Given two linear relations $S_1$ and $S_2$ in $\C^m$ we say that they are \emph{strictly equivalent} if there exists an invertible matrix $T\in\C^{m\times m}$ such that
\begin{align}\label{DuaLipa}
S_2=\begin{bmatrix}
T&0 \\ 0&T
\end{bmatrix}\cdot S_1.
\end{align}
If one identifies the matrices $T$ and $T^{-1}$ with their graphs, \eqref{DuaLipa}
can be written in the sense of linear relations in the following form
\begin{align*}
S_2=TS_1T^{-1}.
\end{align*}

The notions of eigenvalue, root manifolds and point spectrum also apply to linear relations. Given $\lambda\in\C$, $S-\lambda$ stands for the linear relation $S-\lambda I$:
\begin{align*}
S-\lambda = \left\{ (x,y-\lambda x) \ :\  (x,y)\in S \right\}.
\end{align*}
Then, $\lambda\in\C$ is an \emph{eigenvalue of $S$} if $N(S-\lambda)\neq\{0\}$. On the other hand, we say that $S$ has an \emph{eigenvalue at} $\infty$ if $\mul(S)\neq\{0\}$. The point spectrum of $S$ is the set $\sigma_p(S)$ consisting of the eigenvalues $\lambda\in\C\cup\{\infty\}$ of $S$.

We denote $S^0 := I$, where $I$ denotes the identity relation (i.e.\ the graph of the identity matrix in $\C^{m}$). For $k=1,2,\ldots$ the $k$-th power of $S$ is defined recursively by
\begin{equation*}%\label{Dietendorf}
S^k := S\cdot S^{k-1}.
\end{equation*}
Thus, we have $(x_k, x_0)\in S^k$ if and only if there exist $x_1,\ldots,x_{k-1}\in \C^m$ such that
\begin{equation}\label{Minaj}
(x_k, x_{k-1}), (x_{k-1}, x_{k-2}), \ldots, (x_1, x_0)\in S.
\end{equation}

In what follows we show that any linear relation $S$ in a finite-dimensional space admits a  decomposition into four different parts. To do so, we have
to introduce some subspaces of $\C^m$ and subrelations of $S$. 
We start with the following notions:
A sequence of pairs in $S$ such as in \eqref{Minaj} is called a {\em chain} in $S$. The chain is called {\em singular}, if $x_k=x_0=0$. 
The root manifolds $\Rc_\lambda(S)$ and $\Rc_\infty(S)$ are given by
\begin{align*}
\Rc_\lambda(S):= \bigcup_{k\in\N} N (S-\lambda)^k \quad \text{and} \quad
\Rc_\infty(S):= \bigcup_{k\in\N}  \mul S^k,
\end{align*}
respectively. 
%Since $N (S-\lambda)^k\subseteq N (S-\lambda)^{k+1}$ for every $k\in\N$, there exist $k_0\leq m$ such that $N (S-\lambda)^k=N (S-\lambda)^{k+1}$ for every $k\geq k_0$, i.e. $\Rc_\lambda(S)=N (S-\lambda)^{k_0}$. A similar statement holds for $\Rc_\infty(S)$. 
Note that $\calR_\la(S) = \calR_0(S-\la)$ and $\calR_\infty(S) = \calR_\infty(S-\la)$ for all $\la\in\C$.
If $\lambda \in \mathbb C$ and $x\in N(S-\lambda)^{l}$, then there exist vectors $x_1,\ldots,x_{l-1}\in \C^m$ with
\begin{equation}\label{JC}
(x, x_{l-1}), (x_{l-1}, x_{l-2}), \ldots, (x_1, 0)\in S-\lambda.
\end{equation}
A chain in $S-\la$ as in \eqref{JC} is called a \emph{Jordan chain of $S$ at $\lambda$}. A chain of the form 
\begin{equation*}%\label{DJCooper}
(0, x_{1}), (x_{1}, x_{2}), \ldots, (x_{l-1}, x_l)\in S
\end{equation*}
is called a \emph{Jordan chain of $S$ at $\infty$}.
Note that a singular chain is always a Jordan chain at both zero and $\infty$. The notion of singular chains
will be used for a finer analysis of the root manifolds. For this,
the \emph{singular chains subspace} associated to $S$ is defined by
$$
\Rc_{\rm c}(S) := \Rc_0(S)\cap\Rc_\infty(S).
$$
The following lemma is from \cite{BSTW20}.

\begin{lemma}\label{CHain}
Let $S$ be a linear relation in a vector space $X$.
Consider a singular chain of the form
\begin{equation*}%\label{schain}
 (0,x_{s}), (x_{s},x_{s-1}), \dots ,(x_{2},x_{1}),  (x_{1},0) \in S.
\end{equation*}
Then, for every $\lambda \in\mathbb C$, the vectors defined by
\begin{equation}\label{zk}
z_j:=\sum_{i=1}^j \begin{pmatrix} s-i \\ j-i \end{pmatrix} \lambda^{j-i}x_i,
\quad j=1,\ldots, s,
\end{equation}
satisfy
\begin{equation*}
(0,z_s), (z_s,z_{s-1}), \ldots, (z_2,z_1), (z_1,0)\in S-\lambda.
\end{equation*}
\end{lemma}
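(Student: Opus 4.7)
My plan is to prove the assertion directly: for each $j=0,1,\ldots,s$ (with the convention $z_0=z_{s+1}=0$), I would show that $(z_{j+1},z_j)\in S-\lambda$, or equivalently $(z_{j+1},\,z_j+\lambda z_{j+1})\in S$, by writing this pair as an explicit $\C$-linear combination of the chain elements $(x_i,x_{i-1})\in S$, where I extend by $x_0=x_{s+1}=0$. Since $S$ is a linear subspace of $\C^m\times\C^m$, every such combination automatically lies in $S$, so the lemma reduces to a purely algebraic identity in the binomial coefficients appearing in \eqref{zk}.

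Fix $j\in\{0,\ldots,s\}$. The strategy is then two-fold. First, read off the coefficients $\beta_i\in\C$ of the combination from the first-coordinate equation $\sum_i\beta_i x_i=z_{j+1}$; matching against \eqref{zk} pins down $\beta_i$ as the coefficient of $x_i$ in $z_{j+1}$, namely $\binom{s-i}{(j+1)-i}\lambda^{(j+1)-i}$ with the usual convention that the binomial vanishes outside its natural range. Second, verify the second-coordinate identity $\sum_i\beta_i\,x_{i-1}=z_j+\lambda z_{j+1}$. After the re-indexing $i\mapsto i+1$ in the sum on the left, this becomes a coefficient-by-coefficient comparison involving the binomials $\binom{s-i}{j-i}$ at suitably shifted values of $i$ and $j$.

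The central identity I expect to exploit is Pascal's rule $\binom{n}{k}=\binom{n-1}{k}+\binom{n-1}{k-1}$, applied to the binomial coefficients that arise from \eqref{zk}. Matching powers of $\lambda$ between the two sides of the second-coordinate equation, Pascal's identity collapses the two contributions to the common expression predicted by the formula for $z_j+\lambda z_{j+1}$, and the termwise equality then yields the desired membership $(z_{j+1},\,z_j+\lambda z_{j+1})\in S$.

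The main obstacle, as I see it, is careful bookkeeping at the boundaries of the chain. The coefficients $\beta_i$ vanish outside the range $1\le i\le j+1$; at the extreme indices $i=1$ and $i=s+1$ the verification must rely on the vanishing of out-of-range binomials (such as $\binom{s-i}{-i}=0$ or $\binom{-1}{\cdot}=0$) together with the endpoint conventions $x_0=x_{s+1}=0$ and $z_0=z_{s+1}=0$, so as to close the combination without residual terms at either end of the chain. Once the boundary cases are dealt with, the interior identities follow termwise from Pascal's rule, and linearity of $S$ delivers the lemma.
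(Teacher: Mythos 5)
The paper itself gives no proof of Lemma \ref{CHain} (it is quoted from \cite{BSTW20}), so your attempt has to stand on its own. Your overall strategy --- exhibit each pair $(z_{j+1},\,z_j+\lambda z_{j+1})$ as an explicit linear combination of the chain pairs and close the coefficient bookkeeping with Pascal's rule --- is the right one, and essentially the only elementary argument available. The gap is that the decisive step, the second-coordinate comparison, is asserted rather than carried out, and for the statement exactly as printed it fails, and not merely at the boundary indices. With $\beta_i=\binom{s-i}{j+1-i}\lambda^{j+1-i}$ forced by the first coordinate, the second coordinate of your combination is $\sum_k\binom{s-k-1}{j-k}\lambda^{j-k}x_k$, whereas the coefficient of $x_k$ in $z_j+\lambda z_{j+1}$ is $\binom{s-k}{j-k}\lambda^{j-k}+\binom{s-k}{j+1-k}\lambda^{j-k+2}$: the two contributions carry different powers of $\lambda$, so Pascal's rule cannot merge them, and already for $s=2$, $j=1$, $k=1$ one gets $1\neq 1+\lambda^2$. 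The endpoint $j=s$ fails even more visibly: \eqref{zk} gives $z_s=\sum_{i=1}^s\lambda^{s-i}x_i$, and $(0,z_s)$ is not a linear combination of the pairs $(0,x_s),(x_s,x_{s-1}),\ldots,(x_1,0)$ when the $x_i$ are linearly independent and $\lambda\neq0$.

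The source of the trouble is an orientation mismatch in the statement: formula \eqref{zk} is calibrated to the singular chain written as $(0,x_1),(x_1,x_2),\ldots,(x_{s-1},x_s),(x_s,0)\in S$, i.e., built from the pairs $(x_{i-1},x_i)$ rather than $(x_i,x_{i-1})$. For that orientation your scheme closes verbatim: the combination $\sum_{i=1}^{s+1}\binom{s-i+1}{j-i}\lambda^{j-i}(x_{i-1},x_i)$ has first coordinate $z_{j-1}$ and, by Pascal's rule $\binom{s-i+1}{j-i}=\binom{s-i}{j-i}+\binom{s-i}{j-i-1}$, second coordinate $z_j+\lambda z_{j-1}$, with the matching power $\lambda^{j-i}$ appearing in every term. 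So you have the correct idea, but by not actually performing the coefficient comparison you missed both that the printed version cannot be proved this way and the reindexing (replace $x_i$ by $x_{s+1-i}$ and $z_j$ by $z_{s+1-j}$; equivalently, take $z_j=\sum_{i=j}^{s}\binom{i-1}{i-j}\lambda^{i-j}x_i$ for the chain as printed) that makes the lemma true.
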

This implies that, for every $\lambda \in \C \cup \{\infty\}$, 
$$
\Rcc(S)\subset \Rc_{\lambda}(S).
$$
Moreover, for $\lambda, \mu \in \C \cup \{\infty\}$ 
with $\lambda \neq \mu$, in \cite{BSTW20} it is shown that
\begin{equation}\label{Trumpeter}
\Rcc(S)= \Rc_{\lambda}(S)\cap \Rc_{\mu}(S).
\end{equation}

Obviously, if $\Rcc(S) \ne \{0\}$  then $\sigma_p(S)=\C \cup \{\infty\}$.
Otherwise, if  $\Rcc(S) = \{0\}$, then the number of different eigenvalues in $\sigma_p(S)$ is bounded by the space dimension $m$, cf.\ 
\cite[Proposition 3.2 and Theorem 4.6]{SandDeSn05}. Hence, it is natural
to call an eigenvalue $\la\in\sigma_p(S)$ {\em degenerate} if $\calR_\la(S)\subset\Rcc(S)$ and {\em proper} otherwise.
A linear relation $S$ is called \emph{completely singular} if $S\subseteq \Rc_{c}(S) \times \Rc_{c}(S)$. If $S$ is not completely singular,
\begin{align*}
S_{\rm sing}:= S\cap (\Rc_{c}(S) \times \Rc_{c}(S)),
\end{align*}
denotes the \emph{completely singular part of $S$}.

Finally, there may exist a part of $S$ which has no Jordan chains
at all, e.g.\ 
$$
S:=\text{span} \{(x_1,x_2), (x_2,x_3), \ldots , (x_{l-1},x_l)\},
$$
where $x_1 , \ldots , x_l$ are linear independent vectors in $\mathbb C^m$.
Then this linear relation has empty spectrum. A linear relation
 with empty spectrum is called a \emph{multi-shift}.
 
% Now we are in a position to state 
In the following we present a decomposition result for 
 linear relations in finite dimensional spaces, see \cite{BSTW20}. 
To do so, we need the notion of a reducing sum decomposition, cf.\ \cite{BSTW21}.
\begin{defi}\label{decomm} \rm
Let $S$ be a linear relation in a vector space $X$. Assume that
\begin{equation*}%\label{red1-}
% \mathbb C^m=\mathfrak H_1 \oplus \mathfrak H_2 \oplus \cdots \oplus \mathfrak H_n
 X=D_1 \oplus D_2 \oplus \cdots \oplus D_n
\end{equation*}
and
\begin{equation}\label{red1}
 S=S_1 \oplus S_2 \oplus \cdots \oplus S_n,
\end{equation}
where $S_j$ is a linear relation in $D_j$ for $j=1,\ldots,n$.
Then the decomposition \eqref{red1} is called a \emph{reducing sum decomposition of~$S$ with
respect to $(D_1,\ldots,D_n)$}.
\end{defi}

The following result is taken  from~\cite[Lemma~8.1]{SandDeSn07}.
\begin{lemma}\label{Lem:PowersDecomp}
Let $S$ be a linear relation in $\mathbb C^m$ and assume that
$S$ has a reducing sum decomposition
with respect to $(D_1,\ldots,D_n)$ as in Definition~\ref{decomm}.
Let $k\in\mathbb N$, then
\begin{equation*}%\label{redk}
    S^k= S_1^k \oplus S_2^k \oplus \cdots \oplus S_n^k,\quad
\end{equation*}
is a reducing sum decomposition of $S^k$
with respect to $(D_1,\ldots,D_n)$.
\end{lemma}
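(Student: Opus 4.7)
The plan is to proceed by induction on $k$, with the base case $k=1$ being the hypothesis. For the inductive step I aim to show that whenever both $S$ and $S^k$ admit reducing sum decompositions with respect to the same tuple $(D_1,\ldots,D_n)$, so does the product $S^{k+1} = S\cdot S^k$. The crux is therefore a product-type lemma for reducing decompositions.

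First I would note that if $S_j\subset D_j\times D_j$, then a straightforward induction using the definition of the product of linear relations gives $S_j^k\subset D_j\times D_j$ for all $k\in\N$. Consequently, $S_i^k\cap S_j^k\subset (D_i\cap D_j)\times(D_i\cap D_j)=\{0\}$ for $i\ne j$, so the formal direct sum $S_1^k\oplus\cdots\oplus S_n^k$ is well-defined, and it is again a linear relation in $D_1\oplus\cdots\oplus D_n=\C^m$.

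Next I would prove the two inclusions separately. The inclusion $S_1^{k+1}\oplus\cdots\oplus S_n^{k+1}\subset S^{k+1}$ is direct: for each $i$, a pair $(x_i,z_i)\in S_i^{k+1}$ comes with some intermediate $y_i\in D_i$ satisfying $(x_i,y_i)\in S_i^k$ and $(y_i,z_i)\in S_i$; summing over $i$ and using the inductive hypothesis together with the reducing decomposition of $S$ yields the intermediate $y=\sum y_i$ witnessing $(x,z)\in S\cdot S^k = S^{k+1}$. For the reverse inclusion, take $(x,z)\in S^{k+1}$, pick an intermediate $y$ with $(x,y)\in S^k$ and $(y,z)\in S$, and decompose uniquely $y=\sum y_i$ with $y_i\in D_i$. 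By the inductive hypothesis the pair $(x,y)\in S^k = \oplus S_i^k$ splits as $(x_i,y_i')\in S_i^k\subset D_i\times D_i$, and similarly $(y,z)\in S = \oplus S_i$ splits as $(y_i'',z_i)\in S_i\subset D_i\times D_i$. The uniqueness of the decomposition of $y$ in $D_1\oplus\cdots\oplus D_n$ forces $y_i'=y_i''=y_i$ for each $i$, so $(x_i,z_i)\in S_i\cdot S_i^k=S_i^{k+1}$, and $(x,z)\in\oplus S_i^{k+1}$.

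The main (and essentially only) subtlety is this last uniqueness step: the two a priori unrelated decompositions of the pair $(x,y)$ inside $S^k$ and of the pair $(y,z)$ inside $S$ have to produce the \emph{same} splitting of $y$. This is guaranteed precisely because each $S_j$ (and therefore each $S_j^k$) lies in $D_j\times D_j$ and the $D_j$ are complementary, so the $D_j$-components of $y$ are intrinsic to $y$ and not to the particular representation as a sum. Once this is established, the inductive step is complete and the lemma follows for all $k\in\N$.
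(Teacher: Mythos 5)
Your proof is correct. Note that the paper itself gives no argument for this lemma: it is simply quoted from \cite[Lemma~8.1]{SandDeSn07}, so there is no in-paper proof to compare against. Your induction on $k$ is the natural self-contained argument, and you correctly isolate and resolve the one genuinely delicate point, namely that the splitting of the intermediate vector $y$ induced by $(x,y)\in S^k=\bigoplus_i S_i^k$ and the splitting induced by $(y,z)\in S=\bigoplus_i S_i$ must coincide; this follows, as you say, from $S_j^k\subset D_j\times D_j$ together with the uniqueness of components in $\C^m=D_1\oplus\cdots\oplus D_n$. Two cosmetic remarks: for the direct sum of $n>2$ relations one should check $S_i^k\cap\sum_{j\ne i}S_j^k=\{0\}$ rather than only pairwise intersections, but your containment argument $\sum_{j\ne i}S_j^k\subset\bigl(\bigoplus_{j\ne i}D_j\bigr)\times\bigl(\bigoplus_{j\ne i}D_j\bigr)$ gives this immediately; and your use of $S^{k+1}=S\cdot S^k$ matches the paper's recursive definition of powers, so the orientation of the product is consistent.
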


Let $S$ be a linear relation in $\mathbb C^m$ and assume that
$S$ has a reducing sum decomposition $S=S_1\oplus S_2$
with respect to $\C^m=D_1\oplus D_2$. First of all, let us show that
\begin{equation}\label{Natti1}
     N(S)=N(S_1)\oplus N(S_2).
 \end{equation}
Obviously, $N(S_1)+ N(S_2)\subset N(S)$. On the other hand, if $x\in N(S_1)\cap N(S_2)$ then
$(x,0) \in S_1$ and $(x,0) \in S_2$.
As $S = S_1\oplus S_2$ is a direct sum, $x=0$ follows and
the sum of the nullspaces is a direct sum.

Also, if $x\in N(S)$ then there exist $y_1 \in D_1$ and
 $y_2 \in D_2$ such that $(x,0)=(x_1,y_1) + (x_2,y_2)$
 with $(x_1,y_1)\in S_1$ and $(x_2,y_2)\in S_2$. As $ S_1\oplus S_2$ is a reduced sum decomposition, $y_1=y_2=0$ and the proof is completed.

By Lemma \ref{Lem:PowersDecomp} it is immediate that 
 %\begin{equation*}%\label{Natti2}
     %S^k= S_1^k\oplus S_2^k
 %\end{equation*}
 %is a reducing sum decomposition with respect to  $(\C^{m_1}, \C^{m_2})$.
 %Replacing $S,S_1,S_2$ by $S^k,S_1^k,S_2^k$, respectively, in \eqref{Natti1}, we obtain
\begin{equation}\label{Natti3}
      N(S^k)=N(S_1^k)\oplus N(S_2^k) \qquad \text{and} \qquad \Rc_{0}(S)=\Rc_{0}(S_1)\oplus \Rc_{0}(S_2).
 \end{equation}
 %and, hence,
%\begin{equation}\label{Natti4}
     %\Rc_{0}(S)=\Rc_{0}(S_1)\oplus \Rc_{0}(S_2).
 %\end{equation}
Given $\la\in\C$, replacing the linear relations $S, S_1$, and $S_2$ by $S-\lambda, S_1-\lambda$, and $S_2-\lambda$, respectively,  we conclude 
 \begin{equation}\label{Natti3AA}
      N((S-\lambda)^k)=N((S_1-\lambda)^k)\oplus N((S_2-\lambda)^k)
      \mbox{ and }
       \Rc_{\lambda}(S)=\Rc_{\lambda}(S_1)\oplus \Rc_{\lambda}(S_2).
 \end{equation}
 Moreover, if $\mu\in\mathbb C$,
%by \eqref{Trumpeter}, for some $\lambda, \mu\in\mathbb C$,
$\mu \neq \lambda$, by \eqref{Trumpeter} we have
\begin{equation}\label{Natti7}
\begin{aligned}
\Rcc(S) &= \Rc_{\lambda}(S)\cap \Rc_{\mu}(S)
 = \left(\Rc_{\lambda}(S_1)\oplus \Rc_{\lambda}(S_2) \right)\cap 
 \left( \Rc_{\mu}(S_1)\oplus \Rc_{\mu}(S_2)\right)\\[1ex]
 &= \left(\Rc_{\lambda}(S_1)\cap \Rc_{\mu}(S_1) \right)\oplus
 \left( \Rc_{\lambda}(S_2)\cap \Rc_{\mu}(S_2)\right)=
 \Rcc(S_1)\oplus \Rcc(S_2).
\end{aligned}
\end{equation}
In a similar way, replacing $S,S_1$, and $S_2$ by their inverses we obtain
%Taking inverses we obtain from \eqref{Natti3}
\begin{equation}\label{Natti5}
     \mul(S^k)=\mul(S_1^k)\oplus \mul(S_2^k).
 \end{equation}
%Similar as in \eqref{Natti3} one shows
Finally, notice that it is possible to show that
\begin{equation}\label{Natti8}
R(S^k)=R(S_1^k)\oplus R(S_2^k).
\end{equation}
If $S$ has a reducing sum decomposition as in \eqref{red1} then decompositions similar to those presented in \eqref{Natti1}-\eqref{Natti8} follow by induction. 

\medskip

Given a proper eigenvalue $\la\in\C$ of $S$, consider the subrelation 
$$S\cap \left(\Rc_{\lambda}(S)\times\Rc_{\lambda}(S)\right).$$ 
Since $S_{\rm sing}$ is contained in it and $\Rc_c(S)\subset \Rc_\lambda(S)$, it is possible to find a subspace $D_\la$ such that $\Rc_\lambda(S)=\Rcc(S)\oplus D_{\lambda}$ and a linear operator $J_\la:D_\la\rightarrow D_\la$ with $\sigma_p(J_\la)=\{\lambda\}$ such that
\begin{equation}\label{Drew}
S\cap \left(\Rc_{\lambda}(S)\times\Rc_{\lambda}(S)\right)
= S_{\rm sing}\oplus J_{\lambda}, 
\end{equation}
is a reducing sum decomposition with respect to $(\Rcc(S),D_{\lambda})$, see \cite{BSTW21}. If $\infty$ is a proper eigenvalue of $S$ then  $J_\infty$ stands for a linear relation with $\sigma_p(J_\infty)=\{\infty\}$, which is the inverse (of the graph) of a linear operator on a subspace $D_\infty$ such that $\Rc_\infty(S)=\Rcc(S)\oplus D_{\infty}$ and
\begin{equation}\label{Drews}
S\cap \left(\Rc_{\infty}(S)\times\Rc_{\infty}(S)\right)
= S_{\rm sing}\oplus J_\infty
\end{equation}
is a reducing sum decomposition with respect to $(\Rcc(S),D_{\infty})$.

\begin{theorem}\label{splitit}
Let $S$ be a linear relation in $\mathbb C^m$ with proper point spectrum $\{\lambda_1, \ldots, \lambda_l\}\subset\C\cup\{\infty\}$. Then,
there exist subspaces $D_{\la_1},\ldots,D_{\la_l}, D_M$ such that $\C^m=\Rc_c(S)\oplus D_{\la_1}\oplus \ldots \oplus D_{\la_l}\oplus D_M$ and
linear relations $S_{\rm sing}, J_{\la_1}, \ldots, J_{\la_l}, S_M$ as in \eqref{Drew} and \eqref{Drews} such that
\begin{equation}\label{DemiLovato}
S=S_{\rm sing} \oplus  J_{\la_1}\oplus \cdots\oplus J_{\la_l} \oplus S_M
\end{equation}
is a reduced sum decomposition of~$S$ with respect to
$$
\left(\Rc_{c}(S), D_{\lambda_1}, \ldots,  D_{\lambda_l},
D_M\right).
$$
The linear relation $S_{\rm sing}=S
\cap \left(\Rcc(S)\times\Rcc(S)\right)$ is completely singular and $S_M$ is a multishift. For $1\leq j\leq l$ the linear relation
$J_{\la_j}$ satisfies $\sigma_p(J_j)=\lambda_j$  and
\begin{equation}\label{Juergen}
S\cap \left(\Rc_{\lambda_j}(S)\times\Rc_{\lambda_j}(S)\right)
= S_{\rm sing}\oplus J_{\lambda_j} 
\end{equation}
is a reducing sum decomposition with respect to $(\Rcc(S),D_{\lambda_j})$.
%If $\infty$ is a proper eigenvalue, then  $J_\infty^{-1}$ is an operator with $\sigma_p(J_\infty)=\{\infty\}$ and
%\begin{equation}\label{Drews}
%S\cap \left(\Rc_{\infty}(S)\times\Rc_{\infty}(S)\right)
%= S_{\rm sing}\oplus J_\infty
%\end{equation}
%is a reducing sum decomposition with respect to $(
%\Rcc(S),R( J_{\infty}))$.
%Moreover, if $\infty$ is not a proper eigenvalue, then
%$J_\infty$ is not present.
\end{theorem}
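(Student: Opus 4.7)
The plan is to iterate the single-eigenvalue decompositions \eqref{Drew} and \eqref{Drews}, established in \cite{BSTW21}, across all the proper eigenvalues of $S$ and then extract a multishift part on a complementary subspace. The argument divides naturally into three steps.

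First, I would prove that the subspace sum $\Rcc(S) + D_{\lambda_1} + \cdots + D_{\lambda_l}$ is direct. Fix $j$ and set $V_{\hat\jmath} := \Rcc(S) + \sum_{i\ne j} D_{\lambda_i}$. Since each $D_{\lambda_i}\subseteq \Rc_{\lambda_i}(S)$, any element of $\Rc_{\lambda_j}(S)\cap V_{\hat\jmath}$ must, projecting componentwise, lie in $\Rc_{\lambda_j}(S)\cap \Rc_{\lambda_i}(S) = \Rcc(S)$ for each $i\ne j$ by \eqref{Trumpeter}. Combined with $D_{\lambda_j}\cap \Rcc(S)=\{0\}$, which is built into the construction of $D_{\lambda_j}$ in \eqref{Drew}/\eqref{Drews}, this yields $D_{\lambda_j}\cap V_{\hat\jmath}=\{0\}$. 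A short induction on $l$ then shows that $V_l:=\Rcc(S)\oplus D_{\lambda_1}\oplus\cdots\oplus D_{\lambda_l}$ is a genuine direct sum of subspaces.

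Next, I would prove by induction on $l$ that
$$
S\cap (V_l\times V_l)= S_{\rm sing}\oplus J_{\lambda_1}\oplus\cdots\oplus J_{\lambda_l}
$$
is a reducing sum decomposition with respect to $(\Rcc(S), D_{\lambda_1}, \ldots, D_{\lambda_l})$. The base case $l=1$ is precisely \eqref{Drew}/\eqref{Drews}. In the inductive step, combining the hypothesis with \eqref{Juergen} applied at the new eigenvalue $\lambda_l$ and with the inclusions $S_{\rm sing}\subseteq \Rcc(S)\times\Rcc(S)$ and $J_{\lambda_j}\subseteq D_{\lambda_j}\times D_{\lambda_j}$, the summands remain pairwise disjoint. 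That they exhaust $S\cap(V_l\times V_l)$ follows from the root-manifold identities \eqref{Natti3AA} and \eqref{Natti7} applied inductively to the decomposed subrelations.

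Finally, one selects a subspace $D_M\subseteq \C^m$ complementary to $V_l$ together with a subrelation $S_M\subseteq D_M\times D_M$ so that the total becomes a reducing sum decomposition \eqref{DemiLovato}. The verification that $S_M$ is a multishift is essentially automatic: any $\lambda\in\sigma_p(S_M)$ would, via \eqref{Natti3AA} applied to the decomposition, produce a root manifold $\Rc_\lambda(S_M)$ sitting inside $\Rc_\lambda(S)$ yet disjoint from $\Rcc(S)$, forcing $\lambda$ to be a proper eigenvalue of $S$ outside $\{\lambda_1, \ldots, \lambda_l\}$, contradicting the hypothesis. The main obstacle of the proof will be this last step: the existence of a complement $D_M$ and of a subrelation $S_M$ on it for which $(S\cap(V_l\times V_l))\oplus S_M = S$ is genuinely a \emph{reducing} decomposition is not automatic from \eqref{Drew}/\eqref{Drews} alone. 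It must be obtained either by an explicit construction using Lemma \ref{Lem:PowersDecomp} and the identities \eqref{Natti1}--\eqref{Natti8}, or by invoking directly the underlying decomposition theorem of \cite{BSTW21}.
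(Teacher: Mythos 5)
The paper does not actually prove Theorem \ref{splitit}: it is imported from \cite{BSTW21} (the sentence preceding it reads ``we present a decomposition result for linear relations in finite dimensional spaces, see \cite{BSTW20}'', and the constructions \eqref{Drew}, \eqref{Drews} are likewise attributed there), so there is no in-paper argument to compare yours against. Judged on its own merits, your sketch has the right architecture --- iterate the single-eigenvalue splittings over all proper eigenvalues, then peel off a multishift on a complement --- but two of the three steps are not yet proofs.

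In Step 1 the directness argument fails as written. From $x\in\Rc_{\lambda_j}(S)$ and $x=r+\sum_{i\ne j}d_i$ with $r\in\Rcc(S)$ and $d_i\in D_{\lambda_i}$, you cannot ``project componentwise'': membership in $\Rc_{\lambda_j}(S)$ is a property of the sum $x$, not of its individual summands, and \eqref{Trumpeter} only controls the \emph{pairwise} intersections $\Rc_{\lambda_j}(S)\cap\Rc_{\lambda_i}(S)$. Pairwise trivial intersections do not make a multi-fold sum direct (three distinct lines in $\C^2$ meet pairwise in $\{0\}$ yet do not sum directly). What you actually need is that the root manifolds at distinct eigenvalues sum directly modulo $\Rcc(S)$, a genuinely nontrivial fact established in \cite{SandDeSn05} and \cite{BSTW20} by its own induction; it must be cited or reproved, not deduced from \eqref{Trumpeter}. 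Step 3 is the real substance of the theorem --- producing $D_M$ and a subrelation $S_M\subseteq D_M\times D_M$ so that the total splitting is \emph{reducing} in the sense of Definition \ref{decomm} --- and your proposal, after correctly identifying this as the main obstacle, resolves it by ``invoking directly the underlying decomposition theorem of \cite{BSTW21}'', i.e.\ by appealing to the very statement being proved. As it stands the argument is therefore either circular at its crucial point or reduces to the citation the paper itself gives.
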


\section{Weyr characteristic of a linear relation in $\C^m$}\label{sec:weyr}
As it was mentioned in the introduction, the Weyr characteristic of a linear operator $S$ acting on $\C^m$ is a complete set of invariants which determines the Jordan normal form of $S$. The aim of this section is to introduce a complete set of invariants for a linear relation and to show that this is an analogue of the Weyr characteristic of a linear operator.

Let $S$ be a linear relation in $\mathbb C^{m}$ and assume that it is decomposed as in \eqref{DemiLovato}.
We introduce the Weyr characteristic for a linear relation according to four different parts of it.
%along \eqref{DemiLovato} four different kinds of Weyr characteristics for a linear relation. 
We distinguish between (a) the singular part of $S$, (b) each finite proper eigenvalue, and (c) the possible proper eigenvalue $\infty$. The fourth kind corresponds to the part of $S$ which has no eigenvalues.

\begin{defi}\label{CosteraAzul} \rm
Given a linear relation $S$ in $\mathbb C^{m}$, let $\lambda_1, \ldots, \lambda_l$ be its different proper eigenvalues and assume that \eqref{DemiLovato} holds.
%of $S$ such that \eqref{DemiLovato} holds. 
%For each $k\geq 1$, we define the following:
\begin{enumerate}
\item The \emph{Weyr characteristic corresponding to a proper eigenvalue} $\lambda_j\in\C$ of $S$ is given by the sequence $W(\lambda_j):= (W_k(\lambda_j))_{k\geq 1}$, where
\begin{align*}
W_k(\lambda_j):=\dim\frac{N((S-\lambda_j)^k)+\Rcc(S)}{N((S-\lambda_j)^{k-1})+\Rcc(S)}.
\end{align*}
\item If $\infty$ is a proper eigenvalue of $S$ then the \emph{Weyr characteristic corresponding to} $\infty$ is given by the sequence $A:=(A_k)_{k\geq 1}$, where 
\begin{align*}
A_k:=\dim\frac{\mul(S^k) +\Rcc(S)}{\mul(S^{k-1})+\Rcc(S)}.
\end{align*}
\item If $S_{\rm sing}\neq \{0\}$ then the \emph{Weyr characteristic corresponding to the singular chains subspace} is defined by the sequence $B:=(B_k)_{k\geq 1}$, where
\begin{align*}
B_k:=\dim\frac{N (S^{k})\cap \Rcc(S)}{N( S^{k-1})\cap \Rcc(S)}.
\end{align*}
\item If $S_M\neq \{0\}$
%$S\neq \Span \{\Rc_{\la}(S) : \la \in \sigma_p(S) \}\times \Span \{\Rc_{\la}(S) : \la \in \sigma_p(S) \}$ 
then the \emph{Weyr characteristic corresponding to the multi-shifts} is given by the sequence $C:=(C_k)_{k\geq 1}$, where
\begin{align*}
C_1:=\dim\frac{R(S)+\dom S}{R(S)+\Rc_0(S)} \quad \text{and} \quad C_k:=\dim\frac{R(S^{k-1})+\Rc_{0}(S)}{R(S^{k})+\Rc_{0}(S)}, \ k\geq 2.
\end{align*}
\end{enumerate}
\end{defi}

Each of the sequences $W(\la_j)$, $A$, $B$, and $C$ is 
non-increasing \cite{BSTW21}. Since the underlying space is finite-dimensional each of the Weyr characteristic sequences contains only a finite number of non-zero terms.
In what follows, we shall treat them as finite sequences and omit the zero entries. 
Also, we collect the Weyr characteristics corresponding to the different finite proper eigenvalues in a single sequence: if these are $\lambda_1, \ldots, \lambda_l$, we set
\begin{equation*}%\label{Ws}
W:=(W(\lambda_1), W(\lambda_2), \ldots, W(\lambda_{l})).
\end{equation*}
\begin{defi}
The collection of finite sequences
\begin{equation}\label{Anitta}
\big(W,A,B,C\big)
\end{equation}
is called the \emph{Weyr characteristic} of the linear relation $S$.
If it necessary to refer to the underlying linear relation we also
write $\big(W(S),A(S),B(S),C(S)\big)$. Moreover, in the same manner, we
write then $W(S)=(W_S(\lambda_1), W_S(\lambda_2), \ldots, W_S(\lambda_{l}))$ and
$W_S(\lambda_j)= (W_k(S,\lambda_j))_{k\geq 1}$, $A(S)=(A_k(S))_{k\geq 1}$, $B(S)=(B_k(S))_{k\geq 1}$, and
 $C(S)=(C_k(S))_{k\geq 1}$.
\end{defi}
%It is helpful to extend the notion $W(\lambda)$ to all $\lambda \in \mathbb C$ by setting it zero.

Next, we explain the quantities measured by $W(\la_j)$, $A$, $B$, and $C$. Let us start with $B$.
%$W_k(\lambda)$, $A_k$, $B_k$, and $C_k$ for a given $k\in\N$. Let us start with $B_k$. 
Since $S$ is a linear relation in a finite dimensional space, there exists $k_0\in\N$ such that $\Rc_0(S)= N(S^{k_0})$. If $[x]\neq 0$ belongs to $\frac{N(S^{k_0})\cap \Rcc(S)}{N( S^{k_0-1})\cap \Rcc(S)}$, then there exist $y_1,\ldots,y_{k_0-1}\in\C^m$ such that
\begin{align*}
(x, y_1), (y_1, y_2),\ldots, (y_{k_0-1},0)\in S.
\end{align*}
We have $x\in  \Rcc(S)$ and the chain $(x, y_1), (y_1, y_2),\ldots, (y_{k_0-1},0)$ is a Jordan chain (at 0) of length $k_0$. Moreover, the vectors $x,y_1,\ldots,y_{k_0-1}$ are linearly independent. Hence, $B_{k_0}$ is the number of linearly independent Jordan chains (at 0) of length $k_0$ in $S$ that can be extended to a singular chain. Following this reasoning, it is easy to see that for each $k$, $1\leq k \leq k_0$,
%$B_{k_0-1}$ measures the number of linearly independent Jordan chains (at 0) of length at least $k_0-1$ that can be extended to a singular chain, etc. Thus,
$B_{k}$ represents the number of linearly independent Jordan chains (at 0) of length at least $k$ that can be extended to a singular chain.

\smallskip

From the definition of $S_{\rm sing}$ we have that $(S^{-1})_{\rm sing}=(S_{\rm sing})^{-1}$, which implies the following lemma.
\begin{lemma}\label{Bs}
Let $S$ be a linear relation in $\C^m$. Then, for each $k\geq 1$,
$$
B_k(S)=B_k(S^{-1}).
$$ 
\end{lemma}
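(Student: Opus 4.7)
The plan is to reduce both $B_k(S)$ and $B_k(S^{-1})$ to quantities depending only on the completely singular part $S_{\rm sing}$, and then to finish via the canonical form of completely singular relations.

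First, I would establish the identity
$$N(S^k)\cap\Rcc(S)=N(S_{\rm sing}^k),$$
together with its dual $\mul(S^k)\cap\Rcc(S)=\mul(S_{\rm sing}^k)$. The inclusion $\supseteq$ is trivial. For $\subseteq$, take $x\in N(S^k)\cap\Rcc(S)$; then there are $x_1,\ldots,x_{k-1}$ with $(x,x_{k-1}),(x_{k-1},x_{k-2}),\ldots,(x_1,0)\in S$, and since $x\in\Rcc(S)\subseteq\Rc_\infty(S)$ there are also $y_1,\ldots,y_{l-1}$ with $(0,y_1),(y_1,y_2),\ldots,(y_{l-1},x)\in S$. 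Concatenating the two chains produces a singular chain of $S$ passing through $x_{k-1},\ldots,x_1$, so these vectors lie in $\Rcc(S)$; hence the kernel chain actually lives in $S_{\rm sing}$ and $x\in N(S_{\rm sing}^k)$. This yields
$$B_k(S)=\dim\frac{N(S_{\rm sing}^k)}{N(S_{\rm sing}^{k-1})}.$$

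Applying exactly the same reduction to $S^{-1}$ and using $\Rcc(S^{-1})=\Rcc(S)$, $N((S^{-1})^k)=\mul(S^k)$, together with the hinted identity $(S^{-1})_{\rm sing}=(S_{\rm sing})^{-1}$, one obtains
$$B_k(S^{-1})=\dim\frac{N(((S_{\rm sing})^{-1})^k)}{N(((S_{\rm sing})^{-1})^{k-1})}=\dim\frac{\mul(S_{\rm sing}^k)}{\mul(S_{\rm sing}^{k-1})}.$$
Hence the lemma reduces to the following symmetry for every completely singular relation $T$:
$$\dim\frac{N(T^k)}{N(T^{k-1})}=\dim\frac{\mul(T^k)}{\mul(T^{k-1})}.$$

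To prove this symmetry I would invoke the canonical form of a completely singular relation from \cite{BSTW21}, which decomposes $T$ as a direct sum of elementary singular blocks $T=T_{s_1}\oplus\cdots\oplus T_{s_r}$, where each $T_s$ is spanned by the pairs $(0,x_s),(x_s,x_{s-1}),\ldots,(x_1,0)$ of a single singular chain of width $s$. A short direct computation on a block yields $\dim N(T_s^k)=\dim\mul(T_s^k)=\min(k,s)$, so by the direct-sum identities \eqref{Natti3} and \eqref{Natti5} both sides of the displayed equation equal $\#\{i:s_i\ge k\}$. The main obstacle is securing this canonical decomposition; once it is available, the rest is bookkeeping via the direct-sum identities collected earlier in the section, and a purely abstract proof avoiding the canonical form seems to circle back on the $T\leftrightarrow T^{-1}$ symmetry one is trying to prove.
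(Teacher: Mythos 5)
Your proof is correct, and it is essentially a rigorous elaboration of the paper's argument, which consists of the single observation $(S^{-1})_{\rm sing}=(S_{\rm sing})^{-1}$ with everything else left implicit. What you add are precisely the two missing steps: first, the identity $N(S^k)\cap\Rcc(S)=N(S_{\rm sing}^k)$ (your concatenation argument is sound -- every vector appearing in a singular chain lies in both $\Rc_0(S)$ and $\Rc_\infty(S)$, hence in $\Rcc(S)$, so the kernel chain through $x$ indeed lives in $S_{\rm sing}$), which localizes $B_k$ to the completely singular part; and second, the dimension symmetry $\dim\bigl(N(T^k)/N(T^{k-1})\bigr)=\dim\bigl(\mul(T^k)/\mul(T^{k-1})\bigr)$ for completely singular $T$. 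This second step is the genuine content of the lemma: the identity $(S^{-1})_{\rm sing}=(S_{\rm sing})^{-1}$ alone does not give it, since $N(S^k)\cap\Rcc(S)$ and $\mul(S^k)\cap\Rcc(S)$ are in general different subspaces (already for a single elementary block they are spanned by opposite ends of the chain), and only their dimensions agree. You are right to flag the canonical decomposition of $S_{\rm sing}$ into elementary singular-chain blocks as the one external input; it is available in the cited structure results of Berger, de Snoo, Trunk and Winkler, on which the paper already leans for the decomposition \eqref{DemiLovato}, and your block computation $\dim N(T_s^k)=\dim\mul(T_s^k)=\min(k,s)$ together with \eqref{Natti3} and \eqref{Natti5} then closes the argument. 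An alternative that avoids the full canonical form is to use only the weaker fact from \cite{BSTW20} that $\Rcc(S)$ admits a basis composed of singular chains, which yields the same count $\#\{i:s_i\ge k\}$ on both sides; either way, the bookkeeping is as you describe.
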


%\begin{proof}
%First, note that
%\begin{align*}
%\Rcc(S) &= \Rc_0(S)\cap \Rc_\infty(S)=\Rc_\infty(S^{-1})\cap \Rc_0(S^{-1})=\Rcc(S^{-1}).
%\end{align*}
%Also, for each $k\geq 1$ we have that $N((S^{-1})^k)=\mul(S^k)$. Then, let us show that $B_k(S)=B_k(S^{-1})$ for every $k\geq 1$. Since $S$ is a linear relation in a finite dimensional space, there exist $h_0\in\N$ such that $\Rcc(S)=N(S^{h_0})\cap\mul(S^{h_0})$. Note that $B_{h_0}(S^{-1})=\frac{N((S^{-1})^{h_0})\cap \Rcc(S^{-1})}{N( (S^{-1})^{h_0-1})\cap \Rcc(S^{-1})}=\frac{\mul(S^{h_0})\cap\Rcc(S)}{\mul(S^{h_0-1})\cap\Rcc(S)}$ measures the number of linearly independent Jordan chains of $S$ at infinity of length $h_0$ that can be extended to a singular chain, i.e. the number of linearly independent Jordan chains of $S$ of the form
%$$
%(0,z_1),(z_1,z_2),\ldots,(z_{h_0-1},z_{h_0})\in S,
%$$
%%where $z_1,\ldots, z_{h_0}\in\C^m$ are linearly independent and 
%such that $(z_{h_0},0)$ also belongs to $S$. Thus, $B_{h_0}(S^{-1})=B_{h_0}(S)$ because both are counting the linearly independent chains of $S$ of the form
%$$
%(0,z_1),(z_1,z_2),\ldots,(z_{h_0-1},z_{h_0}),(z_{h_0},0)\in S,
%$$
%where $z_1,\ldots, z_{h_0}\in\C^m$ are linearly independent. In a similar way, for each $1\leq k \leq h_0$, it can be shown that $B_{k}(S^{-1})=B_{k}(S)$ because both are counting the linearly independent chains of $S$ of the form
%$$
%(0,z_1),(z_1,z_2),\ldots,(z_{k-1},z_{k}),(z_{k},0)\in S,
%$$ 
%where $z_1,\ldots, z_{k}\in\C^m$ are linearly independent. This completes the proof.
%\end{proof}
%

To continue with the meaning of $W(\lambda_j)$ for a complex eigenvalue $\lambda_j$ of $S$ we show that, for each $k\in\N$, the sum $W_k(\lambda_j)+B_k$ equals the dimension of $\frac{N((S-\lambda_j)^k)}{N((S-\lambda_j)^{k-1})}$. Actually, this is true not only for $\lambda_j$ but for any complex number $\lambda$.

\begin{lemma}\label{numbers}
Let $S$ be a linear relation in $\C^m$. For $k\geq 1$ and  $\lambda \in \mathbb C$, %the following holds:
\begin{equation*}%\label{ThankPedro}
W_k(S,\lambda) + B_k(S) =
\dim\frac{N((S-\lambda)^k)}{N((S-\lambda)^{k-1})}.
\end{equation*}
\end{lemma}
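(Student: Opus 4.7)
The plan is to combine an elementary subspace dimension identity with the binomial shift of Lemma~\ref{CHain}.

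First, I would prove the following algebraic identity: for any subspaces $U\subseteq V$ and $R$ of a vector space,
\begin{equation*}
\dim(V/U) \;=\; \dim\frac{V+R}{U+R} \;+\; \dim\frac{V\cap R}{U\cap R}.
\end{equation*}
This follows from two applications of the second isomorphism theorem once one observes that the inclusion $U\subseteq V$ forces $V\cap(U+R)=U+(V\cap R)$: then $(V+R)/(U+R)\cong V/(V\cap(U+R))$, while $(U+(V\cap R))/U\cong(V\cap R)/(U\cap R)$. Specialising to $V=N((S-\lambda)^k)$, $U=N((S-\lambda)^{k-1})$, and $R=\Rcc(S)=\Rcc(S-\lambda)$, and identifying the first summand on the right with $W_k(S,\lambda)$ by Definition~\ref{CosteraAzul}(1), reduces the lemma to the identity
\begin{equation*}
\dim\frac{N((S-\lambda)^k)\cap\Rcc(S)}{N((S-\lambda)^{k-1})\cap\Rcc(S)} \;=\; B_k(S).
\end{equation*}

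Next, I would show that both sides depend only on the completely singular part of $S$. If $(x,x_{k-1}),\dots,(x_1,0)\in S-\lambda$ is a Jordan chain with $x\in\Rcc(S)$, then each intermediate $x_j$ lies in $N((S-\lambda)^j)\subseteq\Rc_\lambda(S)$; prepending a chain from $0$ to $x$ in $S-\lambda$ (available since $x\in\Rc_\infty(S)=\Rc_\infty(S-\lambda)$) further places $x_j\in\Rc_\infty(S-\lambda)=\Rc_\infty(S)$, whence by \eqref{Trumpeter}, $x_j\in\Rc_\lambda(S)\cap\Rc_\infty(S)=\Rcc(S)$. Coupled with the easily verified identity $(S-\lambda)_{\rm sing}=S_{\rm sing}-\lambda$, this shows $N((S-\lambda)^k)\cap\Rcc(S)=N((S_{\rm sing}-\lambda)^k)$, and in particular $N(S^k)\cap\Rcc(S)=N(S_{\rm sing}^k)$. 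The task thus reduces to proving, for the completely singular relation $T:=S_{\rm sing}$, that
\begin{equation*}
\dim\frac{N((T-\lambda)^k)}{N((T-\lambda)^{k-1})} \;=\; \dim\frac{N(T^k)}{N(T^{k-1})}.
\end{equation*}

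This last step is where I expect the main obstacle. I would decompose $T$ as a reducing sum of elementary singular-chain subrelations---each of the form $\linspan\{(0,x_s),(x_s,x_{s-1}),\dots,(x_1,0)\}$ with linearly independent $x_1,\dots,x_s$---using the structure theory for completely singular relations from \cite{BSTW21}. The shift $T\mapsto T-\lambda$ preserves this reducing decomposition, so Lemma~\ref{Lem:PowersDecomp} together with the decomposition formulas in \eqref{Natti3} reduce the claim to a single elementary chain $T_i$ of length $s$. A direct computation gives $\dim N(T_i^j)=\min(j,s)$. Applying Lemma~\ref{CHain} with shift $\lambda$ produces vectors $z_1,\dots,z_s$ whose matrix in the basis $x_1,\dots,x_s$ is upper triangular with unit diagonal and such that $(0,z_s),(z_s,z_{s-1}),\dots,(z_1,0)\in T_i-\lambda$; these $s+1$ pairs span an $(s+1)$-dimensional subrelation that must coincide with $T_i-\lambda$, itself of dimension $s+1$. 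Thus $T_i-\lambda$ is an elementary chain relation of length $s$, giving the same count $\dim N((T_i-\lambda)^j)=\min(j,s)$ and completing the proof by summation over the elementary chains.
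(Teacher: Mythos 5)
Your proof is correct, but it reaches the key identity by a genuinely different route than the paper. Both arguments rest on the same two pillars: the elementary dimension identity $\dim(V/U)=\dim\frac{V+R}{U+R}+\dim\frac{V\cap R}{U\cap R}$ for $U\subseteq V$ (which you derive from the modular law and the isomorphism theorems, while the paper simply counts dimensions of sums and intersections), and Lemma~\ref{CHain}. The divergence lies in how Lemma~\ref{CHain} is used to show that $\dim\frac{N((S-\lambda)^k)\cap\Rcc(S)}{N((S-\lambda)^{k-1})\cap\Rcc(S)}$ does not depend on $\lambda$. The paper argues directly on quotient classes: a nonzero class $[x]$ in $\frac{N(S^{k})\cap\Rcc(S)}{N(S^{k-1})\cap\Rcc(S)}$ is completed to a singular chain through $x$, the chain is shifted via \eqref{zk}, and the unitriangular form of the coefficients guarantees that $[z_k]\neq 0$ and that linear independence of classes is preserved; no structure theory is needed. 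You instead first prove $N((S-\lambda)^k)\cap\Rcc(S)=N((S_{\rm sing}-\lambda)^k)$ --- a correct and in fact somewhat sharper localization than the paper states --- and then invoke the decomposition of the completely singular relation $S_{\rm sing}$ into elementary singular-chain subrelations, applying Lemma~\ref{CHain} blockwise to see that $T_i-\lambda$ is again an elementary chain of the same length. This yields a cleaner structural picture, but at the cost of importing the chain decomposition of the completely singular part from \cite{BSTW21}, a result this paper never states or needs for this lemma; to make your argument self-contained relative to the present text you would have to cite that decomposition explicitly (or replace this step by the paper's class-by-class argument).
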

\begin{proof}%[Proof of Lemma \ref{numbers}]
First, assume that $x\in\C^m$ is such that $[x]\ne 0$ in $\frac{N(S^k)\cap\Rcc(S)}{N(S^{k-1})\cap\Rcc(S)}$. Then there exist $x_1,\ldots,x_{k-1}\in\C^m$ such that
\begin{align*}
(x,x_{k-1}),(x_{k_1},x_{k-2}),\ldots, (x_1,0)\in S.
\end{align*}
Also, since $x\in \Rcc(S)$, there exist $n\in \N$ and $x_{k+1},\ldots,x_{k + n}\in \C^m$ such that
\begin{align*}
(0, x_{k+n}),(x_{k+n},x_{k+n-1}),\ldots, (x_{k+1},x)\in S.
\end{align*}
Applying Lemma \ref{CHain} to the chain $(0, x_{k+n})$, $(x_{k+n},x_{k+n-1})$, \ldots, $(x_{1},0)$ in $S$, we obtain a chain $(0,z_{k+n})$, $(z_{k+n},z_{k+n-1})$, \ldots, $(z_1,0)$ in $S-\lambda$, where each $z_j$ is  given by \eqref{zk}. In particular, $z_k$ is a linear combination of $x$ and the vectors $x_1,\ldots,x_{k-1}$. Hence, $[z_k]\ne 0$ in $\frac{N((S-\lambda)^k)\cap\Rcc(S)}{N((S-\lambda)^{k-1})\cap\Rcc(S)}$.

A similar argument shows that if $[x^1],\ldots, [x^l]$ are linearly independent in $\frac{N(S^k)\cap\Rcc(S)}{N(S^{k-1})\cap\Rcc(S)}$, then there exist linearly independent classes $[z_k^1],\ldots,[z_k^l]$ in $\frac{N((S-\lambda)^k)\cap\Rcc(S)}{N((S-\lambda)^{k-1})\cap\Rcc(S)}$. Thus, Lemma \ref{CHain} implies that
\begin{equation*}%\label{Pata}
\dim\frac{N (S^{k})\cap \Rcc(S)}{N( S^{k-1})\cap \Rcc(S)}=
\dim\frac{N ((S-\lambda)^{k})\cap \Rcc(S)}{N( (S-\lambda)^{k-1})\cap \Rcc(S)}.
\end{equation*}
Therefore, the statement in the Lemma \ref{numbers} is of the form
\begin{equation}\label{ThankPedro2}
 \dim \frac{X+Z}{Y+ Z}+\dim \frac{X\cap Z}{Y\cap Z}  =\dim \frac{X}{Y},
\end{equation}
where $X:= N((S-\lambda)^k)$, $Y:=N((S-\lambda)^{k-1})$, and $Z:=\Rcc(S)$. But for arbitrary finite-dimensional subspaces $X$, $Y$, $Z$ such that $Y\subseteq X$ it holds that 
\begin{align*}
 \dim \frac{X+Z}{Y+ Z}&=\dim(X+Z)-\dim(Y+Z)\\
 &=\dim X-\dim(X\cap Z)-\dim Y+\dim(Y\cap Z),\\
%&=(\dim X+\dim Z-\dim(X\cap Z))-(\dim Y+\dim Z-\dim(Y\cap Z)),\\
 \dim \frac{X\cap Z}{Y\cap Z}&=\dim(X\cap Z)-\dim(Y\cap Z),
\end{align*}
and summing up the above equations \eqref{ThankPedro2} follows.
%its left hand side is equal to
%It only remains to show \eqref{ThankPedro2}, but its left hand side \eqref{ThankPedro2} is equal to
%\begin{align*}
%&(\dim X + \dim Z - \dim X\cap Z) - (\dim Y + \dim Z - \dim Y\cap Z) \\
%& \ \ \
%+ \dim X\cap Z - \dim Y\cap Z \\
%& = \dim X - \dim Y=\dim \frac{X}{Y}.
%\end{align*}
\end{proof}

As a consequence of Lemma \ref{numbers}, for each $k\geq 1$, $W_k(\lambda_j)$ represents the number of linearly independent Jordan chains of $S$ at $\lambda_j$ of length at least $k$ that {\em cannot} be extended into a singular chain.

Since $\infty$ is an eigenvalue of $S$ if and only if $0$ is an eigenvalue of the inverse relation $S^{-1}$, and $\mul(S)=N(S^{-1})$, the above arguments also show that, for each $k\geq 1$, $A_k$ represents the number of linearly independent Jordan chains of $S$ at $\infty$ of length at least $k$ that cannot be extended into a singular chain. To make this precise, we formulate another lemma.

\begin{lemma}\label{numbers with A}
Let $S$ be a linear relation in $\C^m$. Then, for each $k\geq 1$, 
\begin{equation*}
A_k(S) + B_k(S) = \dim\frac{\mul(S^k)}{\mul(S^{k-1})}.
\end{equation*}
\end{lemma}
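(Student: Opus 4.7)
The plan is to derive Lemma~\ref{numbers with A} as a corollary of Lemma~\ref{numbers} applied to the inverse relation $S^{-1}$ at the eigenvalue $0$, exploiting the symmetry between $\mul$ and $N$ under inversion, together with the invariance $B_k(S)=B_k(S^{-1})$ established in Lemma~\ref{Bs}.

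First I would record the three key identities that make this reduction work. Since inversion of linear relations interchanges the roles of kernel/nullspace and multivalued part, we have $\mul(S^k)=N((S^{-1})^k)$ for every $k\geq 1$ (using $(S^k)^{-1}=(S^{-1})^k$, which follows from the associativity of the product of linear relations and the fact that inversion reverses pairs). Similarly, $\Rc_0(S^{-1})=\Rc_\infty(S)$ and $\Rc_\infty(S^{-1})=\Rc_0(S)$, hence
\begin{equation*}
\Rcc(S^{-1})=\Rc_0(S^{-1})\cap\Rc_\infty(S^{-1})=\Rc_\infty(S)\cap\Rc_0(S)=\Rcc(S).
\end{equation*}

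Combining these, I would rewrite the terms appearing in the claim in terms of $S^{-1}$. For the left-hand side,
\begin{equation*}
A_k(S)=\dim\frac{\mul(S^k)+\Rcc(S)}{\mul(S^{k-1})+\Rcc(S)}=\dim\frac{N((S^{-1})^k)+\Rcc(S^{-1})}{N((S^{-1})^{k-1})+\Rcc(S^{-1})}=W_k(S^{-1},0),
\end{equation*}
while by Lemma~\ref{Bs} we have $B_k(S)=B_k(S^{-1})$. Applying Lemma~\ref{numbers} to $S^{-1}$ at $\lambda=0$ then gives
\begin{equation*}
A_k(S)+B_k(S)=W_k(S^{-1},0)+B_k(S^{-1})=\dim\frac{N((S^{-1})^k)}{N((S^{-1})^{k-1})}=\dim\frac{\mul(S^k)}{\mul(S^{k-1})},
\end{equation*}
which is the desired identity.

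The only conceptual obstacle is verifying the three inversion identities I use, especially $\Rcc(S)=\Rcc(S^{-1})$ and the compatibility of the $B_k$-quotients under inversion; but the first is immediate from the definitions of the root manifolds at $0$ and $\infty$, and the second is exactly the content of Lemma~\ref{Bs}. The computational skeleton then mirrors the proof of Lemma~\ref{numbers} verbatim — indeed, one could also give a direct proof by repeating the argument of Lemma~\ref{numbers} with $\mul(S^k)$ in place of $N((S-\lambda)^k)$, invoking Lemma~\ref{CHain} applied to $S^{-1}$ to move singular chains past $\lambda=0$ — but the reduction via $S^{-1}$ is much shorter and avoids redoing the dimension count \eqref{ThankPedro2}.
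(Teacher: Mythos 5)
Your proof is correct and follows essentially the same route as the paper: the paper also deduces the lemma immediately from Lemma~\ref{numbers} applied to $S^{-1}$ at $\lambda=0$ together with $A_k(S)=W_k(S^{-1},0)$ and Lemma~\ref{Bs}. Your explicit verification of the inversion identities $\mul(S^k)=N((S^{-1})^k)$ and $\Rcc(S^{-1})=\Rcc(S)$ is a welcome addition that the paper leaves implicit.
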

\begin{proof}
This is an immediate consequence of Lemmas \ref{Bs} and \ref{numbers}. Indeed, for every $k\geq 1$, since $A_k(S)=W_k(S^{-1},0)$ and $B_k(S)=B_k(S^{-1})$ we obtain that
\begin{align*}
A_k(S) + B_k(S) &= W_k(S^{-1},0) + B_k(S^{-1})=\dim \frac{N((S^{-1})^k)}{N((S^{-1})^{k-1})}\\
&= \dim\frac{\mul(S^k) }{\mul(S^{k-1}) }.
\end{align*}
\end{proof}

The meaning of $C$ is clearly explained in \cite{BSTW21}. Given $k\geq 1$, $C_k$ stands for the number of linearly independent chains of the form
$$
(x_1,x_2), (x_2,x_3),\ldots, (x_{k},x_{k+1})
$$ 
that are contained in $S_M$ and are composed by linearly independent vectors $x_1,\ldots,x_{k+1}\in\C^m$. We say that such a chain is a \emph{multi-shift chain} (\emph{of length $k$}) in $S_M$.
Therefore, $C_k$ represents the number of linearly independent multi-shift chains of length $k$ in $S$ that are not part of a Jordan chain of $S$ at $\la=0$. 

%
%It only remains to explain the meaning of $C_k$.  It is easy to see that $\dim\frac{R(S^k)}{R(S^{k+1})}$ represents the number of linearly independent multi-shifts of length at least $k$ that are contained in $S$. Therefore, $C_k$ represents the number of linearly independent multi-shifts of length at least $k$ in $S$ that are not contained in the Jordan part of $S$.

\medskip

The above considerations are made somehow more precise in Proposition \ref{DojaCat} below. 
Before, we need to 
prove a somehow technical but very helpful lemma. Given two multi-indices $a = (a_1,\ldots,a_k)\in\N^k$ and $b = (b_1,\ldots,b_l)\in\N^l$ with $k\geq l$, we define $a + b$ as the multi-index in $\N^k$ given by
$$
a + b := (a_1 + b_1, \ldots, a_l + b_l, b_{l+1}, \ldots, b_k).
$$
Note that if $a$ and $b$ are non-increasing multi-indices, then so is $a+b$.
%
%For this
%let $a = (a_1,\ldots,a_k)\in\N^k$ and $b = (b_1,\ldots,b_\ell)\in\N^\ell$ be two non-increasing multi-indices, $k\ge\ell$. Then we set
%$$
%a + b := b + a := a + \tilde b,
%$$
%where $\tilde b := (b_1,\ldots,b_\ell,0,\ldots,0)\in\N^k$. 

\begin{lemma}\label{JeyLoh}
Let $S$ be a linear relation in $\C^m$ and assume that it has a reduced sum decomposition $S = S_1\oplus S_2$ with respect to $\C^m=D_1\oplus D_2$. Then, 
%Let $S_1$ and $S_2$ be linear relations in $\C^{m_1}$ and $\C^{m_2}$, respectively, and let $S = S_1\oplus S_2$ be a reduced sum decomposition
%with respect to  $(\C^{m_1}, \C^{m_2})$. Then
$$
A(S) = A(S_1) + A(S_2),\quad B(S) = B(S_1) + B(S_2),\quad\text{and\;\;} C(S) = C(S_1) + C(S_2).
$$
If $\la$ is a proper eigenvalue of $S_1$ or $S_2$, we have 
\begin{equation}\label{TrasBotellasalpha}
W_S(\la) = W_{S_1}(\la) + W_{S_2}(\la).
 \end{equation}
% If  $\la$ is a proper eigenvalue of $S_1$ but not of $S_2$ we have $W_S(\la) = W_{S_1}(\la)$ and vice versa.
\end{lemma}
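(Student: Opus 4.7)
The plan is to exploit the decompositions of nullspaces, multivalued parts, ranges, and singular chains subspaces already summarized in \eqref{Natti1}--\eqref{Natti8}, and then use Lemmas \ref{numbers} and \ref{numbers with A} as the bridge between the four sequences.

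First I would establish the $B$-identity directly from the definition. From \eqref{Natti3} and \eqref{Natti7} one has $N(S^k)=N(S_1^k)\oplus N(S_2^k)$ and $\Rcc(S)=\Rcc(S_1)\oplus\Rcc(S_2)$, and since both summands on each right-hand side lie in the respective $D_i$ with $D_1\cap D_2=\{0\}$, the decomposition $\C^m = D_1\oplus D_2$ forces
$$
N(S^k)\cap \Rcc(S) = \bigl(N(S_1^k)\cap \Rcc(S_1)\bigr)\oplus \bigl(N(S_2^k)\cap \Rcc(S_2)\bigr).
$$
For subspaces $U_i\subseteq V_i\subseteq D_i$ one has $\dim\frac{V_1\oplus V_2}{U_1\oplus U_2}=\dim\frac{V_1}{U_1}+\dim\frac{V_2}{U_2}$, whence $B_k(S)=B_k(S_1)+B_k(S_2)$.

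Next I would derive the $W$-identity from Lemma \ref{numbers}. Applying that lemma to $S$, $S_1$ and $S_2$, and using \eqref{Natti3AA} together with the elementary dimension split above (with $V_i=N((S_i-\lambda)^k)$ and $U_i=N((S_i-\lambda)^{k-1})$), one obtains
$$
W_k(S,\lambda)+B_k(S) = \bigl(W_k(S_1,\lambda)+B_k(S_1)\bigr)+\bigl(W_k(S_2,\lambda)+B_k(S_2)\bigr),
$$
and subtracting the $B$-identity gives \eqref{TrasBotellasalpha}. The $A$-identity follows in precisely the same way from Lemma \ref{numbers with A} together with \eqref{Natti5}.

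Finally, for the $C$-identity with $k\geq 2$, I would combine \eqref{Natti8} and \eqref{Natti3} to get
$$
R(S^k)+\Rc_0(S) = \bigl(R(S_1^k)+\Rc_0(S_1)\bigr)\oplus \bigl(R(S_2^k)+\Rc_0(S_2)\bigr),
$$
which is again a direct sum because the two summands lie inside $D_1$ and $D_2$. Dividing the identities for $k-1$ and $k$ yields $C_k(S)=C_k(S_1)+C_k(S_2)$. The case $k=1$ works identically once one notes that $\dom S=\dom S_1\oplus \dom S_2$, which is immediate from $S=S_1\oplus S_2$ and the fact that $\dom S_i\subseteq D_i$.

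The main obstacle—although conceptually mild—is the bookkeeping needed to be sure that each intersection and each sum of subspaces is compatible with the internal direct sum $\C^m=D_1\oplus D_2$. This is handled once and for all by the observation that whenever subspaces $U_1\subseteq D_1$ and $U_2\subseteq D_2$ are sumned or intersected with similarly structured subspaces, the result again decomposes as $(\cdot)\oplus(\cdot)$ along $D_1\oplus D_2$, so all dimensions simply add across summands.
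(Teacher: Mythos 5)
Your proof is correct, but it inverts the logical order of the paper's argument. The paper proves the $W$-, $A$-, and $C$-identities \emph{directly}, by showing that each defining quotient (e.g.\ $\frac{N((S-\lambda)^k)+\Rcc(S)}{N((S-\lambda)^{k-1})+\Rcc(S)}$) splits as a direct sum of the corresponding quotients for $S_1$ and $S_2$ via \eqref{Natti3AA}, \eqref{Natti5}, \eqref{Natti7} and \eqref{Natti8}; only the $B$-identity is obtained \emph{indirectly}, by applying Lemma \ref{numbers} to $S$, $S_1$, $S_2$ and cancelling the already-established $W$-identity. You do the opposite: you prove the $B$-identity directly, from the observation that intersections of subspaces adapted to $D_1\oplus D_2$ distribute over the internal direct sum (the same mechanism the paper uses to establish \eqref{Natti7}), and you then recover $W$ and $A$ by subtraction, using Lemma \ref{numbers} and Lemma \ref{numbers with A} respectively. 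Both routes rest on the same decompositions \eqref{Natti1}--\eqref{Natti8} and the same bridging lemmas, and both are complete. Yours has the small advantage that the $B$-identity --- the one invariant tied to the singular chains --- is exhibited as a genuine direct-sum splitting of $N(S^k)\cap\Rcc(S)$ rather than inferred by cancellation; the paper's order has the small advantage of not needing Lemma \ref{numbers with A} at all, since it handles $A$ directly through the quotient $\frac{\mul(S^k)+\Rcc(S)}{\mul(S^{k-1})+\Rcc(S)}$. Your treatment of $C$, including the $k=1$ case via $\dom S=\dom S_1\oplus\dom S_2$, matches the paper's.
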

\begin{proof}

Let $\lambda$ be a proper eigenvalue of $S_1$ or $S_2$.
 Then, using \eqref{Natti3AA} and  \eqref{Natti7} we obtain
 \begin{align*}
 \ \ &\frac{N((S-\lambda_j)^k)+\Rcc(S)}{N((S-\lambda_j)^{k-1})+\Rcc(S)} \\[1ex]
  &=\frac{\left(N((S_1-\lambda_j)^k)+\Rcc(S_1)\right)
  \oplus \left(N((S_2-\lambda_j)^k)+\Rcc(S_2)\right)}{\left(N((S_1-\lambda_j)^{k-1})+\Rcc(S_1)\right)
  \oplus \left(N((S_2-\lambda_j)^{k-1})+\Rcc(S_2)\right)}
\end{align*}
which shows \eqref{TrasBotellasalpha}. The statement on $A(S)$ follows in the same way, using \eqref{Natti3AA} and \eqref{Natti5}. The same applies to $C(S)$, where one uses 
\eqref{Natti8} and \eqref{Natti3}. 

It remains to show the statement on $B(S)$. Using the same argument as above, we have 
\begin{align}\label{cafe}
\dim\frac{N((S-\lambda)^k)}{N((S-\lambda)^{k-1})} =\dim\frac{N((S_1-\lambda)^k)}{N((S_1-\lambda)^{k-1})} +
\dim \frac{N((S_2-\lambda)^k)}{N((S_2-\lambda)^{k-1})}. 
\end{align}
Then, applying Lemma \ref{numbers} repeatedly, together with \eqref{cafe}, we have 
\begin{align*}
 W_k(S,\lambda) + B_k(S)  &=\dim\frac{N((S-\lambda)^k)}{N((S-\lambda)^{k-1})}\\ &= \left(W_k(S_1,\lambda) + B_k(S_1)\right) + \left( W_k(S_2,\lambda) + B_k(S_2) \right)\\[1ex]
  &= W_k(S,\lambda) + B_k(S_1) + B_k(S_2).
\end{align*}
Therefore, $B_k(S) = B_k(S_1) + B_k(S_2)$ for each $k\geq 1$.
%Relation \eqref{Natti3}, by applying Lemma \ref{numbers}
%three times (to the linear relations $S$, $S_1$, and $S_2$),
%and by what we have proven so far for $W(S)$ we obtain
%\begin{align*}
 %W_k(S,\lambda) + B_k(S)  &=\dim\frac{N((S-\lambda)^k)}{N((S-\lambda)^{k-1})} \\[1ex]
%&=\dim\left(\frac{N((S_1-\lambda)^k)}{N((S_1-\lambda)^{k-1})} \oplus
%\frac{N((S_2-\lambda)^k)}{N((S_2-\lambda)^{k-1})} \right)\\[1ex]
  %&=  \left(W_k(S_1,\lambda) + B_k(S_1)\right) + \left( W_k(S_2,\lambda) + B_k(S_2) \right)\\[1ex]
  %&= W_k(S,\lambda) + B_k(S_1) + B_k(S_2).
%\end{align*}
\end{proof}

\begin{prop}\label{DojaCat}
Given a linear relation $S$ in $\mathbb C^{m}$ assume that the reduced sum decomposition \eqref{DemiLovato} holds.
Then, the Weyr characteristic of $S$ satisfies:
\begin{align*}
&W_S(\la_j) = W_{J_{\la_j}}(\la_j), \qquad  \text{for}\ 1\leq j\leq l,\\ 
A(S)&=A(J_\infty),\quad B(S)=B(S_{\rm sing}), \quad\text{and}\quad C(S)=C(S_M).
\end{align*}
\end{prop}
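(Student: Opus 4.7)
The plan is to apply Lemma \ref{JeyLoh} iteratively to the reducing sum decomposition \eqref{DemiLovato}. This expresses each of $W_S(\la_j)$, $A(S)$, $B(S)$, $C(S)$ as a sum of the corresponding characteristics of $S_{\rm sing}$, $J_{\la_1},\ldots,J_{\la_l}$, and $S_M$. It then remains to verify that, in each of the four cases, only the designated summand survives; the rest contribute $0$.

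For $W_S(\la_j) = W_{J_{\la_j}}(\la_j)$ I would argue as follows. The kernels $N((J_{\la_i}-\la_j)^k)$ are trivial for $i\neq j$ since $\sigma_p(J_{\la_i}) = \{\la_i\}$, and similarly $N((S_M-\la_j)^k) = \{0\}$ because $S_M$ has empty point spectrum; hence $W_{J_{\la_i}}(\la_j) = 0$ for $i\neq j$ and $W_{S_M}(\la_j) = 0$. For $S_{\rm sing}$, using that it is completely singular gives $\dom((S_{\rm sing}-\la_j)^k) \subseteq \dom S_{\rm sing} \subseteq \Rcc(S) = \Rcc(S_{\rm sing})$, so $N((S_{\rm sing}-\la_j)^k)$ is absorbed by $\Rcc(S_{\rm sing})$ and $W_{S_{\rm sing}}(\la_j) = 0$. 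The proof that $A(S) = A(J_\infty)$ is dual, replacing $N((\cdot -\la_j)^k)$ by $\mul((\cdot)^k)$ and noting that $\infty \in \sigma_p(T)$ iff $0 \in \sigma_p(T^{-1})$ for each summand.

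For $B(S) = B(S_{\rm sing})$, the numerator $N((\cdot)^k)\cap \Rcc(\cdot)$ vanishes as soon as the singular chain subspace of the summand is trivial. By \eqref{Trumpeter} applied with any $\mu\neq \la_i$, one gets $\Rcc(J_{\la_i}) = \Rc_{\la_i}(J_{\la_i}) \cap \Rc_\mu(J_{\la_i}) = \{0\}$ (the second factor being trivial since $\mu \notin \sigma_p(J_{\la_i})$), and likewise $\Rcc(S_M)= \{0\}$ since $\sigma_p(S_M) = \emptyset$. Consequently $B_k(J_{\la_i}) = B_k(S_M) = 0$.

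For $C(S) = C(S_M)$ one needs $C$ to vanish on $S_{\rm sing}$ and on each $J_{\la_i}$. For $S_{\rm sing}$, all iterated ranges satisfy $R(S_{\rm sing}^{k}) \subseteq R(S_{\rm sing}) \subseteq \Rcc(S_{\rm sing}) \subseteq \Rc_0(S_{\rm sing})$ and likewise $\dom S_{\rm sing} \subseteq \Rc_0(S_{\rm sing})$; hence numerator and denominator in each $C_k$ collapse to $\Rc_0(S_{\rm sing})$. The main obstacle is the case distinction for $J_{\la_i}$: when $\la_i \in \C\setminus\{0\}$, $J_{\la_i}$ is an invertible operator on $D_{\la_i}$, so $R(J_{\la_i}^k) = D_{\la_i}$ for every $k$; when $\la_i = 0$, $J_0$ is nilpotent and $\Rc_0(J_0) = D_0$; when $\la_i = \infty$, $J_\infty$ is the inverse of a nilpotent operator on $D_\infty$, so $R(J_\infty^k) = D_\infty$ while $\Rc_0(J_\infty) = \{0\}$. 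In each of these three situations both the numerator and the denominator in the definition of $C_k(J_{\la_i})$ reduce to $D_{\la_i}$, so $C_k(J_{\la_i}) = 0$, which completes the verification.
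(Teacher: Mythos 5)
Your proposal is correct and follows essentially the same route as the paper: decompose via Lemma \ref{JeyLoh} and then check that every non-designated summand contributes trivially, including the same three-way case distinction ($\la_i=0$, $\la_i=\infty$, $\la_i\in\C\setminus\{0\}$) for the $C$-part. The only cosmetic differences are that you justify $\Rcc(J_{\la_i})=\{0\}$ via \eqref{Trumpeter} where the paper uses $\Rcc(J_{\la_j})\subset\Rcc(S)\cap D_{\la_j}=\{0\}$, and you treat the $A$-part by passing to inverses rather than directly noting $\mul(J_{\la_j}^k)=\{0\}$.
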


\begin{proof}
Fixed $1\leq i\leq l$, applying Lemma \ref{JeyLoh} to the reducing sum decomposition \eqref{DemiLovato} we get
\begin{align*}
W_S(\la_i) &= W_{S_{\rm sing}}(\la_i)+ \sum_{j=1}^l W_{J_{\la_j}}(\la_i) + W_{S_M}(\la_i),
%A(S) &=A(S_{\rm sing})+ \sum_{j=1}^l A(J_{\la_j}) + A(S_M),\\
%B(S) &=B(S_{\rm sing})+ \sum_{j=1}^l B(J_{\la_j}) + B(S_M),
%C(S) &=C(S_{\rm sing})+ \sum_{j=1}^l C(J_{\la_j}) + C(S_M),
\end{align*}
and similar descriptions hold for $A$, $B$ and $C$.

Note that $W_{S_{\rm sing}}(\la_i)$ and $W_{S_M}(\la_i)$ are trivial sequences because $S_{\rm sing}\subset \Rcc(S)\times \Rcc(S)$ and $\Rcc(S_M)=N((S_M-\la)^k)=\{0\}$ for every $k\geq 1$ and every $\la\in\C\cup\{0\}$, respectively. Therefore, $W_S(\la_i)=\sum_{j=1}^l W_{J_{\la_j}}(\la_i)$. Moreover,  since $\sigma_p(J_{\la_j})=\{\la_j\}$ it follows that
\begin{align*}
W_S(\la_i)=\sum_{j=1}^l W_{J_{\la_j}}(\la_i)=W_{\la_i}(\la_i).
\end{align*}
Let us analyze the case of the sequence $A$. Once again, $A(S_{\rm sing})$ and $A(S_M)$ are trivial because $S_{\rm sing}\subset \Rcc(S)\times \Rcc(S)$ and $\Rcc(S_M)=\mul(S_M^k)=\{0\}$ for every $k\geq 0$, respectively. Also, if $\la_j\in\C$ then $A(J_{\la_j})$ is trivial because $\Rcc(S_{J_{\la_j}})=\mul(J_{\la_j}^k)=\{0\}$ for every $k\geq 1$ and every $1\leq j\leq l$. Hence, $A(S)=A(J_\infty)$.
%for the same reason as it is for $S_M$.  % and it is non trivial if and only if $\la_j=\infty$ for some $1\leq j\leq l$. 

According to \eqref{Juergen}, $\Rcc(J_{\la_j}) \subset \Rcc(S) \cap D_{\la_j} =\{0\}$ for every $1\leq j\leq l$. Then, $B(J_{\la_j})$ is trivial for every $1\leq j\leq l$. Also, $B(S_M)$ is trivial because $N(S_M^k)=\{0\}$ for every $k\geq 1$. Thus, $B(S)=B(S_{\rm sing})$.

To complete the proof let us show that $C(S)=C(S_M)$. We have that $C(S_{\rm sing})$ is trivial because $R(S_{\rm sing}^k)\subseteq \Rcc(S_{\rm sing})\subseteq \Rc_0(S_{\rm sing})$ for every $k\geq 1$. 

It remains to prove that $C(J_{\la_j})$ is trivial for each $j$, $1\leq j\leq l$. 
Firstly, assume that $\la_j=0$ for some $j$. In this case, $J_0$ is a nilpotent operator i.e. there exist $k_0\in\N$ such that $J_0^{k_0}=0$. Then, $\Rc_0(J_0)=N(J_0^{k_0})=D_0=\dom J_0$ and we have that $R(J_0^k)\subseteq D_0=\Rc_0(J_0)$ for each $k\geq 1$, so $C(J_0)$ is trivial. 

Secondly, assume that $\la_j=\infty$ for some $j$. Since $J_\infty$ is the inverse of the graph of a linear operator $T_0$ defined on $D_\infty$, it is immediate that $\dom J_\infty=R(T_0)\subseteq D_\infty$, $\Rc_0(J_\infty)=\{0\}$ and $R(J_\infty^k)=\dom(T_0^k)=D_\infty$ for each $k\geq 1$. Hence, $C(J_\infty)$ is trivial.
Finally, given $1\leq j\leq l$, assume that $\la_j\in\C\setminus\{0\}$. Since $0\notin \sigma_p(J_{\la_j})$ we have that $J_{\la_j}$ is invertible. In particular, $\Rc_0(J_{\la_j})=\{0\}$ and $R(J_{\la_j}^k)=D_{\la_j}=\dom J_{\la_j}$ for each $k\geq 1$, which shows that $C(J_{\la_j})$ is the trivial sequence.  
Therefore, $C(S)=C(S_M)$. 
%and the following:
%
%1. As $S_{\rm sing}\subset \Rcc(S)\times \Rcc(S)$ it follows for $k\in \mathbb N$, $1\leq j\leq l$
%$$
%W_k(S_{\rm sing}, \la_j)= A_k(S_{\rm sing})= C_k(S_{\rm sing})=0. 
%$$
%
%2. As $S_M\subset \dom  S_M + R( S_M)$ it follows from Theorem \ref{splitit}
%for $k\in \mathbb N$, $1\leq j\leq l$
%$$
%W_k(S_{M}, \la_j)= A_k(S_M)= B_k(S_M)=0. 
%$$
%
%3. For $k\in \mathbb N$, $1\leq j\leq l$, we have $\Rcc(J_\infty) \subset \Rcc(S) \cap \dom J_\infty =\{0\}$ and 
%$\Rcc(J_{\la_j}) \subset \Rcc(S) \cap \dom J_{\la_j} =\{0\}$, hence
%$$
%B_k(J_{\infty})= B_k(J_{\la_j})=0
%$$
%and the statement on $B(S)$ is shown.
 %
%4. As  for every $k\in \mathbb N$ and $\la_j\neq 0$ we have
%$\Rc_{\la_j}(J_{\la_j})\subset R(J_{\la_j}^k)$ and
%$\Rc_{\infty}(J_{\infty})\subset R(J_{\infty}^k)$ 
%(see also \cite{BSTW21}) we obtain
%$$
%C_k(J_{\infty})= C_k(J_{\la_j})=0
%$$
%and the statement on $C(S)$ follows.
%
%5. As $\mul J_{\la_j}^k\subset \Rc_\infty(S) \cap \dom J_{\la_j}=\{0\}$, and as 
%$N(J_{\la_j}-\lambda_i)^k\subset \dom J_{\la_j}$ for $k\in \mathbb N$, $1\leq i,j\leq l$, $i\neq j$,
%we see
%$$
%A_k(J_{\la_j})=0 \quad W_k(J_{\la_j}, \lambda_i)=0,
%$$
%which shows the statements on $W_S(\la_j)$ and $A(S)$.
\end{proof}

 We close this section with showing that the Weyr characteristic is invariant under strict equivalence.

\begin{lemma}
\label{lem:invariance}
The Weyr characteristics of two strictly equivalent linear relations in $\C^m$ coincide.
\end{lemma}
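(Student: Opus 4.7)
The plan is to transport every subspace appearing in Definition~\ref{CosteraAzul} from $S_1$ to $S_2$ by the linear bijection $\Phi:\C^m\to\C^m$, $\Phi x := Tx$, and then use that a linear bijection preserves the dimension of a quotient.

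The algebraic backbone is the observation that $S_2=TS_1T^{-1}$ (with $T$ and $T^{-1}$ identified with their graphs) implies, by associativity of the product of linear relations and the identity $(TS_1T^{-1})(TS_1T^{-1})=TS_1^2T^{-1}$, that
\[
S_2^k = TS_1^kT^{-1} \quad\text{and}\quad (S_2-\la)^k = T(S_1-\la)^kT^{-1}
\]
for all $k\geq 1$ and all $\la\in\C$. From these two identities and the definitions of kernel, range, domain and multivalued part, one reads off directly
\[
N((S_2-\la)^k)=\Phi\bigl(N((S_1-\la)^k)\bigr),\quad \mul(S_2^k)=\Phi(\mul(S_1^k)),
\]
\[
R(S_2^k)=\Phi(R(S_1^k)),\quad \dom S_2=\Phi(\dom S_1).
\]

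Taking unions over $k$ gives $\Rc_\la(S_2)=\Phi(\Rc_\la(S_1))$ for every $\la\in\C\cup\{\infty\}$, and intersecting the two equalities for $\la=0$ and $\la=\infty$ yields $\Rcc(S_2)=\Phi(\Rcc(S_1))$. In particular, $\sigma_p(S_2)=\sigma_p(S_1)$ and the proper eigenvalues coincide (since $\Phi$ also carries $\Rc_\la(S_1)\subset\Rcc(S_1)$ to $\Rc_\la(S_2)\subset\Rcc(S_2)$).

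Now each of the four Weyr sequences is given by the dimension of a quotient $V/W$ where $V,W$ are finite-dimensional subspaces built from the ingredients listed above. Applying $\Phi$ to numerator and denominator produces exactly the corresponding subspaces attached to $S_2$, and since $\Phi$ is a linear bijection the quotient dimensions are preserved. Explicitly, $W_k(S_2,\la)=W_k(S_1,\la)$, $A_k(S_2)=A_k(S_1)$, $B_k(S_2)=B_k(S_1)$ for all $k\geq 1$, and analogously $C_1(S_2)=C_1(S_1)$ and $C_k(S_2)=C_k(S_1)$ for $k\geq 2$. Hence the Weyr characteristics $(W,A,B,C)$ of $S_1$ and $S_2$ agree. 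The only step requiring any care is the clean verification that product, kernel, range and multivalued part of linear relations intertwine with the two-sided action of $T$; beyond that the argument is a bookkeeping exercise.
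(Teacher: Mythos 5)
Your proposal is correct and follows essentially the same route as the paper: both transport the kernels, ranges, multivalued parts and root manifolds via the invertible matrix $T$ and then use that a linear bijection preserves the dimensions of the relevant quotients. The only cosmetic difference is that you make the intermediate identities $S_2^k=TS_1^kT^{-1}$ and $(S_2-\la)^k=T(S_1-\la)^kT^{-1}$ explicit, whereas the paper passes directly to the transformed subspaces.
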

\begin{proof}
Let $S$ and  $\tilde S$ be two strictly equivalent linear relations in $\C^m$. By definition, there
is an invertible matrix $T$ such that $(x,y)\in S$ if and only if $(Tx,Ty)\in \tilde S$. Therefore, for each $k\in \mathbb N$ and any $\lambda\in\C$,
\begin{align*}
\mul(\tilde S^k)=T(\mul(S^{k})),\quad  N((\tilde S-\lambda)^k)&=T(N((S-\lambda)^k)),\quad \Rcc(\tilde S)=T(\Rcc(S)), \\
R(\tilde S^k)=T(R(S^{k})),\quad &\text{and}\quad \Rc_r(\tilde S)=T(\Rc_r(S)).
\end{align*}
Hence,
\begin{align*}
\frac{N(\tilde S-\lambda)^k)+\Rcc(\tilde S)}{N((\tilde S-\lambda)^{k-1})+\Rcc(\tilde S)}=\frac{T\big(N((S-\lambda)^k)+\Rcc(S)\big)}{T\big(N( (S-\lambda)^{k-1})+\Rcc(S)\big)}.
\end{align*}
The invertibility of $T$ thus implies that $W(S) = W(\tilde S)$. The remaining equalities $A(S) = A(\tilde S)$, $B(S) = B(\tilde S)$, and $C(S) = C(\tilde S)$ are proved similarly.
\end{proof}

\section{Kernel and range representation for matrix pencils}
\label{sec:examples}

Kronecker proved that any pair of matrices $E,F\in\C^{n\times m}$, or what is the same
any matrix pencil of the form $sE-F$,
can be transformed into a canonical form, cf.\ \eqref{KCF} below. Nowadays, this form is referred to as the \emph{Kronecker canonical form},
see e.g. \cite{BergTren12, BergTren13, G59}. %Here we refer to the version in~\cite{G59}.

According to Kronecker \cite{K90}, there exist invertible matrices $U\in\C^{m\times m}$ and $V\in\C^{n\times n}$ such that $V(sE-F)U$ has the following block quasi-diagonal form
\begin{align}\label{KCF}
V(sE-F)U=\begin{bmatrix} sI_{n_0}-J_0& 0&0&0 \\0& sN_\alpha-I_{|\alpha|}&0&0\\ 0&0& sK_{\beta}-L_{\beta} &0\\ 0&0&0& sK_{\gamma}^\top-L_{\gamma}^\top\end{bmatrix}
\end{align}
where $J_0\in\C^{n_0\times n_0}$ is in Jordan normal form, and it is unique up to a permutation of its Jordan blocks, and $\alpha\in\N^{n_\alpha}$, $\beta\in\N^{n_\beta}$, $\gamma\in\N^{n_{\gamma}}$ are multi-indices which are unique up to a permutation of their entries and  the absolute value of $\alpha$ is given by $|\alpha|=\sum_{i=1}^{n_\alpha} \alpha_i$. 
%, see~\cite{K90} and also \cite[Chapter XII]{G59} or \cite{KM06}. 
For further details on the remaining blocks appearing in \eqref{KCF} we refer to Appendix \ref{sec:kron}, which is mainly based on \cite[Chapter XII]{G59}. 

For our purposes it is important to assume that the entries of the multi-indices $\alpha$, $\beta$ and $\gamma$ as well as the sizes of the different Jordan blocks in $J_0$ corresponding to a given eigenvalue $\la$ are arranged in non-increasing order.

Recall that the Weyr characteristic of a matrix arises from its Segre characteristic by computing the conjugate partition. Similar to the Segre characteristic of the matrix $J_0$, also the multi-indices $\alpha, \beta$, and $\gamma$ reflect the various block sizes in the Kronecker canonical form. Hence, it is natural to define the Weyr characteristic of a matrix pencil by conjugate partitions of the Kronecker invariants.

\begin{defi}\label{Costera}
Let $P(s)=sE-F$ be a matrix pencil with matrices 
$E$ and $F$ in $\mathbb C^{n\times m}$.
If $\lambda_1, \ldots, \lambda_{l}$ are the different eigenvalues of the matrix $J_0$ in \eqref{KCF}, 
denote by $w_{\la_1}, \ldots, w_{\la_l}$ the corresponding Weyr characteristics. Assuming that $\lambda_1, \ldots, \lambda_{l}$ are given in a certain order, we define the Weyr characteristic of $J_0$ by $w=(w_{\la_1},\ldots, w_{\la_l})$.
Also, consider the conjugate partitions of the multi-indices $\alpha$, $\beta$, and
$\gamma$ and denote them by $a=(a_1, \ldots, a_{n_{a}})$,
$b=(b_1, \ldots, b_{n_{b}})$, and
$c=(c_1, \ldots, c_{n_{c}})$, respectively.
Then, the collection of multi-indices
\begin{equation}\label{Eminem}
w(P) := (w,a,b,c)
\end{equation}
is called the \emph{Weyr characteristic of the matrix pencil} $P(s)=sE-F$.
\end{defi}

%Let $E$ and $F$ be matrices in $\mathbb C^{n\times m}$.
As it was explained in the introduction, a given matrix pencil $P(s)=sE-F$ gives rise to two associated linear relations, namely
%To a matrix pencil $sE-A$ two subspaces
\begin{align*}
E^{-1}F=N[F,-E]\qquad \text{and} \qquad FE^{-1}= R\begin{bmatrix}
E\\ F\end{bmatrix},
\end{align*}
which are the kernel and the range representation of the pencil, respectively, see \cite{BTW16}. Assuming that the Kronecker canonical form of $P(s)=sE-F$ is given by \eqref{KCF}, in \cite[Example 4.4]{BTW16} the authors showed that the eigenvalues of $P(s)$ are exactly the same as the proper eigenvalues of $FE^{-1}$. The same correspondence holds for the kernel representation $E^{-1}F$. The proof is omitted because it is similar to the arguments used for the range representation.

\begin{prop}
Given $E,F\in\C^{n\times m}$, consider the pencil $P(s)=sE-F$. If $\la\in\C\cup\{\infty\}$, the following conditions are equivalent:
\begin{itemize}
\item[\rm (a)] $\la$ is an eigenvalue of $P(s)$;
\item[\rm (b)] $\la$ is a proper eigenvalue of the linear relation $E^{-1}F$;
\item[\rm (c)] $\la$ is a proper eigenvalue of the linear relation $FE^{-1}$.
\end{itemize}
\end{prop}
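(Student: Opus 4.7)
The plan is to reduce the problem to the Kronecker canonical form and then analyze each of the four block types. First I would observe that strict equivalence of pencils induces strict equivalence of both associated linear relations: if $V(sE-F)U = s\tilde E-\tilde F$ with $U,V$ invertible, then a direct substitution in the definitions gives
\begin{align*}
\tilde E^{-1}\tilde F &= \{(x,y) : VFUx = VEUy\} = U^{-1}(E^{-1}F)U,\\
\tilde F \tilde E^{-1} &= \{(VEUz, VFUz) : z \in \C^m\} = V(FE^{-1})V^{-1}.
\end{align*}
By Lemma \ref{lem:invariance} the proper eigenvalues of the kernel and range representations are invariants, and the eigenvalues of the pencil itself are classically invariant under strict equivalence. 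Hence we may assume that $sE-F$ is already in the Kronecker canonical form \eqref{KCF}.

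Next I would exploit the block-diagonal structure of \eqref{KCF}: partitioning $\C^m$ (resp.\ $\C^n$) according to the columns (resp.\ rows) of the canonical blocks yields reduced sum decompositions
\begin{equation*}
E^{-1}F = \bigoplus_i (E_i^{-1}F_i), \qquad FE^{-1} = \bigoplus_i (F_i E_i^{-1}),
\end{equation*}
where $sE_i-F_i$ ranges over the four block types. By an iterated application of Lemma \ref{JeyLoh}, the proper eigenvalues of $E^{-1}F$ (and of $FE^{-1}$) are the union of the proper eigenvalues of the corresponding block relations. It therefore suffices to inspect each block type individually.

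The analysis block-by-block goes as follows. For a Jordan block $sI_{n_i}-J_0^{(i)}$ at eigenvalue $\la_i$, both $E_i^{-1}F_i$ and $F_iE_i^{-1}$ coincide with the graph of $J_0^{(i)}$, so $\la_i$ is the unique (and proper) eigenvalue. For a block $sN_\alpha - I_{|\alpha|}$, the kernel representation is $-N_\alpha^{-1}$ and the range representation is $-N_\alpha$, each having $\mul\neq\{0\}$ or $\ker\neq\{0\}$ respectively, and a trivial singular chains subspace; thus $\infty$ appears as a proper eigenvalue of both. For the singular blocks $sK_\beta-L_\beta$ and $sK_\gamma^\top-L_\gamma^\top$, the pencils are rectangular and, by direct computation with the explicit shift-structure of $K_\bullet$ and $L_\bullet$, every element of the domain or range of the associated relation can be extended to a singular chain or belongs to a multi-shift; consequently the root manifolds are contained in $\Rcc$, and no proper eigenvalue is produced.

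The main obstacle is the verification in the singular-block case, since it requires an explicit description of the linear relations attached to $sK_\beta-L_\beta$ and $sK_\gamma^\top-L_\gamma^\top$ together with a check that any putative eigenvector can be prolonged backwards and forwards into a singular chain (or shown to belong only to a multi-shift part, which has empty spectrum by definition). Once this is settled, combining the four cases recovers the eigenvalues of the pencil exactly as the proper eigenvalues of $E^{-1}F$, proving (a)$\Leftrightarrow$(b); the dual computation for $FE^{-1}$ yields (a)$\Leftrightarrow$(c), matching the argument from \cite{BTW16} cited before the statement.
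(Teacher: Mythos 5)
Your overall strategy -- reduce to the Kronecker canonical form via strict equivalence, split both representations into a reduced sum over the four block types, and read off the proper eigenvalues blockwise -- is exactly the strategy that underlies this statement in the paper. The paper itself omits the proof (citing \cite[Example 4.4]{BTW16} for the range representation and declaring the kernel case analogous), but all the computations you would need are carried out explicitly in the proofs of Theorems \ref{prop:KCFtoWeyr} and \ref{prop:KCFtoWeyr2}: see \eqref{FalklandArgentina} and \eqref{Heino} for the reduced sum decompositions, and \eqref{eins}, \eqref{rcformula}, \eqref{eins2} for the identification of $\Rc_0$, $\Rc_\infty$ and $\Rcc$ on each block. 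In particular, the step you flag as ``the main obstacle'' (that the singular blocks produce no proper eigenvalues) is precisely \eqref{rcformula} resp.\ \eqref{eins2}: the $\beta$-block component of the space lies entirely inside $\Rcc$, so every eigenvalue it contributes is degenerate, while $S_\gamma$ and $T_\gamma$ are multi-shifts (or trivial) and have empty point spectrum. So your outline is viable, but as written it defers rather than completes the one verification that actually requires work.

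There is also a concrete error in your block analysis of $sN_\alpha-I_{|\alpha|}$. Here $E=N_\alpha$ and $F=I_{|\alpha|}$, so \emph{both} representations equal $\{(N_\alpha x,\,x):x\in\C^{|\alpha|}\}=N_\alpha^{-1}$, the inverse of the graph of $N_\alpha$ (compare $S_\alpha$ in the proof of Theorem \ref{prop:KCFtoWeyr} and $T_\alpha$ in \eqref{Scooter}). Your claim that the range representation is the graph of $-N_\alpha$ is wrong: that relation has $\mul=\{0\}$ and point spectrum $\{0\}$, so the inference ``$\ker\neq\{0\}$, thus $\infty$ appears as a proper eigenvalue'' does not follow -- taken literally it would make $0$, not $\infty$, a proper eigenvalue of $FE^{-1}$ coming from this block, contradicting the statement you are proving. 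The correct relation $N_\alpha^{-1}$ has $\mul(N_\alpha^{-1})=N(N_\alpha)\neq\{0\}$, no finite eigenvalues, and trivial $\Rcc$, which is what yields $\infty$ as a proper eigenvalue of both representations. With that computation corrected and the singular-block verification actually carried out, your argument closes.
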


%\begin{proof}
%First, assume that $\la\in \C$. Given $x\in\C^m\setminus\{0\}$, note that $(\laE-F)x=0$ if and only if $(x,\la x)\in N[F,-E]=E^{-1}F$. 
%\end{proof}

In Sections~\ref{sec:weyr_kernel} and \ref{sec:weyr_range} below we describe the connection between the Weyr characteristic of the matrix pencil and of its kernel and range representation, respectively. 
 It turns out that their Weyr characteristics coincide in the  first two entries in  \eqref{Anitta} and in \eqref{Eminem}, whereas there is a certain 
shift in the last two entries. Before proving this in Sections~\ref{sec:weyr_kernel} and \ref{sec:weyr_range}, we present two simple examples.

\begin{ex} \rm
Given $E=K_{(2,1,1)}=[1,0,0,0]$ and $F=L_{(2,1,1)}=[0,1,0,0]$ in $\C^{1\times 4}$, consider the pencil 
%$P(s)=sE-F$, which is given in Kronecker canonical form,
\begin{align*}
P(s)=sE- F=[s,-1,0,0].
%sK_{(2,1,1)}- L_{(2,1,1)}
\end{align*}
It is given in Kronecker canonical form, and only the third block in the Kronecker canonical form is present.

According to Definition \ref{Costera}, the only non-empty entry in the Weyr characteristic of $P(s)$ is the entry $b=(b_1, \ldots, b_{n_{b}})$, which in this case is the conjugate of the multi-index $\beta=(2,1,1)$. Hence,
\begin{equation}\label{NoPuedoSalir}
b= (3,1).
\end{equation}
On the one hand, the kernel representation  of the pencil $P(s)$ is  %and its second power,
$$
E^{-1}F= %N[L_{211},-K_{211}]=
\left\{\big( (x_1, x_2,x_3 ,x_4) ,  (x_2, y_1, y_2, y_3) \big)
:\ x_1, x_2, x_3, x_4, y_1, y_2, y_3\in \mathbb C \right\},
$$
and also $(E^{-1}F)^2= \mathbb C^4\times \C^4$. This implies that $\Rc_c(E^{-1}F) = \C^4$. Moreover, since $\dim N(E^{-1}F)=3$ and $\dim N(E^{-1}F)^2=4$, we obtain
$$
B(E^{-1}F)=(3,1),
$$
which coincides with \eqref{NoPuedoSalir}.

On the other hand, the range representation of the matrix pencil $P(s)$ is
$$
FE^{-1}=R \begin{bmatrix}E
\\ F\end{bmatrix} = \C\times\C.
$$
Again, this implies that $\Rc_c(FE^{-1}) = \C$ and, since $\dim N(FE^{-1})=1$,
\begin{equation}\label{MuchoTrabajo}
B(FE^{-1})=(1).
\end{equation}
Comparing \eqref{NoPuedoSalir} and \eqref{MuchoTrabajo}, we see that the range representation does not provide the complete Weyr characteristic of $P(s)$. However $B(FE^{-1})$ is part of the sequence $b$, it can be obtained by removing the first entry in $b$.
%crossing out the first entry in $b$ gives $B(FE^{-1})$ in \eqref{MuchoTrabajo}.
\end{ex}

\begin{ex} \rm
Given $E=K_{(2,1,1)}^\top=[1,0,0,0]^\top$ and $F=L_{(2,1,1)}^\top=[0,1,0,0]^\top$ in $\C^{4\times 1}$, consider the pencil 
%$Q(s)=sE-F$ which is given in Kronecker canonical form,
\begin{align*}
Q(s)=sE- F=[s,-1,0,0]^\top.
\end{align*}
It is given in Kronecker canonical form and only the fourth block in the Kronecker canonical form is present.

The only non-zero entry in the Weyr characteristic is the entry $c=(c_1, \ldots, c_{n_{c}})$,
which is the conjugate of the multi-index $\gamma=(2,1,1)$. Hence,
\begin{equation*}%\label{Balvin}
c= (3,1).
\end{equation*}
On the one hand, the kernel representation  of the pencil $Q(s)$ is
%\begin{align*}
%\begin{split}
$$
E^{-1}F=\{0\},
$$
which implies that $C(E^{-1}F)$ is trivial, and $c$ cannot be recovered from $C(E^{-1}F)$.
%\end{split}
%\end{align*}
%This gives
%\begin{equation*}%\label{Costumbre}
%C(E^{-1}F)=\emptyset.
%\end{equation*}
 On the other hand, the range representation of $Q(s)$ is
 $$
 FE^{-1}= \left\{\big((x,0,0,0),(0,x,0,0) \big) :\ x\in \mathbb C \right\},
 $$
and $(FE^{-1})^2=\{0\}$. In this case, $C(FE^{-1})=(1)$, because $\Rcc(FE^{-1})=\{0\}$, $\dim R(FE^{-1})=1$ and $\dim R((FE^{-1})^2)=0$. Hence, the range representation does not provide the complete Weyr characteristic of $Q(s)$ but $C(FE^{-1})$ can be obtained from $c$ by removing the first entry in it.

%From this $C$ is easily computed,
%$$
%C(FE^{-1})=1.
%$$
\end{ex}

\section{Weyr characteristics of a pencil and its kernel representation}\label{sec:weyr_kernel}

In this section we present one of the main results of this paper. Theorem \ref{prop:KCFtoWeyr} and Proposition \ref{prop:correspondence} below show that there is a correspondence between the Weyr characteristic of a matrix pencil and that of its kernel representation. Moreover, it is shown that strict equivalence of pencils and linear relations is essentially the same as having the same Weyr characteristics. 

%We start with a simple observation which need no proof but which is very helpful in the following.

%\begin{lemma}\label{IAmChino}

\begin{theorem}
\label{prop:KCFtoWeyr}
Let $P(s)=sE-F$ be a matrix pencil with matrices 
$E$ and $F$ in $\mathbb C^{n\times m}$
and Weyr characteristic $w(P) = (w,a,b,c)$. If $(W,A,B,C)$ is the Weyr characteristic of the kernel representation $E^{-1}F$ then
\begin{align*}
W=w, \quad A=a, \quad B=b.
\end{align*}
Moreover, if $c=(c_1,\ldots, c_{n_c})$, then $C=(c_3,\ldots, c_{n_c})$.
%\begin{align*}
%C=(c_3,\ldots, c_{n_c}).
%\end{align*}
\end{theorem}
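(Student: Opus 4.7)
My plan is to invoke strict equivalence to reduce to the Kronecker canonical form and then use Lemma~\ref{JeyLoh} to split the Weyr characteristic computation into four independent block analyses. The reduction works because strict equivalence of the pencil descends to strict equivalence of the kernel representation: if $V(sE-F)U$ is in KCF with $U,V$ invertible, a direct check gives
\[
(VEU)^{-1}(VFU) = \begin{bmatrix}U^{-1}&0\\0&U^{-1}\end{bmatrix}\cdot E^{-1}F,
\]
so by Lemma~\ref{lem:invariance} the Weyr characteristic of the kernel representation is unchanged, while the Weyr characteristic of the pencil is invariant by uniqueness of the KCF. Thus one may assume $sE-F$ is already in KCF.

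Writing $E=\operatorname{diag}(I_{n_0},N_\alpha,K_\beta,K_\gamma^\top)$ and $F=\operatorname{diag}(J_0,I_{|\alpha|},L_\beta,L_\gamma^\top)$ and splitting $x,y\in\C^m$ along the column partition decouples $Fx=Ey$ into four independent equations, producing a reducing sum decomposition
\[
E^{-1}F = T_J\oplus T_\infty\oplus T_\beta\oplus T_\gamma.
\]
By Lemma~\ref{JeyLoh} the sequences $W,A,B,C$ add componentwise across these four pieces, so it suffices to analyse each summand.

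For the three easier pieces: $T_J=\Gamma(J_0)$ is the graph of $J_0$, with $W(T_J)=w$ and $A,B,C$ trivial; $T_\infty=\Gamma(N_\alpha)^{-1}$ has only $\infty$ as a proper eigenvalue, and combining Lemma~\ref{Bs} with Lemma~\ref{numbers} (applied to $N_\alpha$) gives $A(T_\infty)$ equal to the Weyr characteristic of $N_\alpha$ at $0$, namely $a$; for the block of $T_\beta$ with $\beta_i=\ell$, the defining relation $\{(x,y)\in\C^\ell\times\C^\ell:y_j=x_{j+1}\text{ for }j=1,\dots,\ell-1\}$ admits the singular chain $(0,e_\ell),(e_\ell,e_{\ell-1}),\dots,(e_1,0)$, so $\Rcc(T_{\beta_i})=\C^\ell$ and $N(T_{\beta_i}^k)=\Span\{e_1,\dots,e_k\}$, giving a $B$-contribution $(1,\dots,1)$ with exactly $\ell$ entries. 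Summing yields $B=b$.

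The main obstacle is $T_\gamma$. For a block $\gamma_i=\ell$, writing $L_{(\ell)}^\top x=K_{(\ell)}^\top y$ row by row forces $y_1=0$, $x_{\ell-1}=0$, and $y_{j+1}=x_j$ for $j=1,\dots,\ell-2$; hence $T_{\gamma_i}$ is the pure multi-shift $\Span\{(e_j,e_{j+1}):j=1,\dots,\ell-2\}$ inside $\C^{\ell-1}$ (trivial for $\ell\leq 2$). Iterating shows $R(T_{\gamma_i}^k)=\Span\{e_{k+1},\dots,e_{\ell-1}\}$ and $\Rcc(T_{\gamma_i})=\{0\}$, from which one reads off that the $C$-contribution of this block has exactly $\ell-2$ nonzero entries, each equal to $1$. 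Summing, $C_k=\#\{i:\gamma_i\geq k+2\}=c_{k+2}$, which is $C=(c_3,\dots,c_{n_c})$. The shift by two reflects the two boundary conditions $y_1=0$ and $x_{\ell-1}=0$, which together compress the $\ell$-sized block into only $\ell-2$ pairs in the kernel representation.
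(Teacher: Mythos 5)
Your proof is correct and follows essentially the same route as the paper: reduce to the Kronecker canonical form via strict equivalence (Lemma \ref{lem:invariance}), split the kernel representation blockwise into a reducing sum decomposition, apply the additivity of the Weyr characteristic from Lemma \ref{JeyLoh}, and compute the powers of each block relation explicitly. The only difference is organizational: by decomposing all the way down to individual Kronecker blocks (including the degenerate size-one blocks of $\beta$ and $\gamma$, which contribute $\C\times\C$ and the zero relation, respectively) you sidestep the paper's five-case analysis of the possible shapes of $\beta$ and $\gamma$, but the underlying computations and conclusions are the same.
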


\begin{proof}
Since $w(P)$ is invariant under strict equivalence, we assume that $P(s)=sE-F$ is in Kronecker canonical form, i.e. \eqref{KCF} holds with $U=I_m$ and $V=I_n$. Then, 
 \begin{align}\label{FalklandArgentina}
 E^{-1}F= S_{n_0}\oplus S_\alpha\oplus S_\beta\oplus S_\gamma,
\end{align}
 is a reducing sum decomposition in the sense of Definition~\ref{decomm}, where $S_{n_0}=N[J_{0},-I]$, $S_\alpha= N[I_\alpha,-N_\alpha]$, $S_\beta=N[L_{\beta},-K_{\beta}]$ and $S_\gamma=N[L_{\gamma}^\top,-K_{\gamma}^\top]$.

Moreover, $S_{n_0}$ admits a reducing
sum decomposition where each subrelation is determined by one and only one Jordan block of $J_0$. Analogously, $S_\alpha$ 
admits a reducing sum decomposition where each subrelation is given by one (and only one) of the blocks along the diagonal of $N_\alpha$.
%Moreover, as $J_0$ and $N_\alpha$ are block diagonal matrices, 
%$ N[J_{0},-I]$ and $N[I_\alpha,-N_\alpha ]$ admit a reducing
%sum decomposition along all Jordan blocks and along \eqref{KissMeMore}, respectively. 
Hence, according to Lemma \ref{JeyLoh}, there is no restriction 
in assuming that the matrix $J_0$ contains only a single Jordan block $J_{n_0}(0)$
of size $n_0$ associated to the eigenvalue $\lambda=0$ and that
the multi-index $\alpha$ consists of one entry only. In this case, the Segre
characteristic of $sI-J_0$ is just the number $n_0$, whereas its Weyr characteristic is the finite sequence $(1,1,\ldots,1)\in\N^{n_0}$. Similarly, the Weyr characteristic of $sN_\alpha-I_\alpha$ is $a= (1,1,\ldots,1)\in\N^{\alpha}$.

In general, the multi-indices $\beta$ and $\gamma$ are of the form
\begin{align*}
\beta = (\beta_1, \ldots, \beta_{l_\beta}, 1,\ldots , 1), \quad
\gamma = (\gamma_1, \ldots , \gamma_{l_\gamma}, 1,\ldots , 1),
\end{align*}
where $\beta_1, \ldots, \beta_{l_\beta}$ and $\gamma_1, \ldots , \gamma_{l_\gamma}$ are integers larger than one and 
the number of entries equal to one in $\beta$ and $\gamma$ are given by $m_\beta$ and $m_\gamma$, respectively.
As with the other relations in \eqref{FalklandArgentina}, $S_{\beta}$ and 
$S_{\gamma}$ admit a reducing
sum decomposition with respect to the blocks corresponding to
$\beta_1, \ldots, \beta_{l_\beta-1}$,
$(\beta_{l_\beta}1,\ldots , 1)$, $\gamma_1, \ldots , \gamma_{l_\gamma-1}$ 
and $(\gamma_{l_\gamma}, 1,\ldots , 1)$, respectively.
Hence, in view of Lemma \ref{JeyLoh}, there is no restriction in assuming that $\beta$ is either $\beta=(\beta_1)$, $\beta=(\beta_1,1,\ldots,1)$ or $\beta=(1,\ldots,1)$ for some $m_\beta>1$. The same applies to $\gamma$. However, the computation of the Weyr characteristic $(W,A,B,C)$ of $E^{-1}F$ depends on the form of $\beta$ and $\gamma$. In the following we analyze five different cases. 

\smallskip

%it is no restriction to assume 
%\begin{align}\label{telepatia}
%\beta = (\beta_1, 1,\ldots , 1), \quad \mbox{and} \quad
%\gamma = (\gamma_1, 1,\ldots , 1).
%\end{align}
%where $\beta_1 > 1$, $\gamma_1>1$. Let us denote the number of 
%entries in $\beta$ equal to one by $m_\beta$ and the number of 
%entries in $\gamma$ equal to one by $m_\gamma$.

%The kernel representation of $P(s)$ has the reducing sum decomposition
%\begin{align}\label{Falkland2}
%S:= E^{-1}F = S_{n_0}\oplus
  %S_\alpha\oplus
   %S_\beta\oplus
    %S_\gamma,
%\end{align}
%where $S_{n_0}:=N[J_{n_0}(0),-I_{n_0}]$, $S_\alpha:=N[I_\alpha,-N_\alpha ]$, $S_\beta:=N[L_{\beta},-K_{\beta}]$, and $S_\gamma:=N[L_{\gamma}^\top,-K_{\gamma}^\top]$.

\emph{Case 1. }\ %If $\gamma\neq (1,\ldots , 1)$ and $\beta \neq (1,\ldots , 1)$, 
If $\beta = (\beta_1,1,\ldots , 1)\in \N^{m_\beta+1}$ for $m_\beta\neq 0$, and $\gamma= (\gamma_1,1,\ldots , 1)\in \N^{m_\gamma+1}$ for $m_\gamma\neq 0$ then the Weyr characteristic of the pencil $P(s)=sE-F$ is
 \begin{align}\label{Jungle}
 \begin{array}{ll}
w= (1,1,\ldots,1)\in\N^{n_0},& a= (1,1,\ldots,1)\in\N^{\alpha}, \\
b=(m_\beta+1,1,\ldots,1)\in\N^{\beta_1},& c=(m_\gamma + 1,1,\ldots,1)\in\N^{\gamma_1}. %\nonumber
 \end{array}
\end{align}

Now, let us compute the Weyr characteristic of the kernel representation $S:=E^{-1}F$.
To do so we calculate the powers of the four linear relations in \eqref{FalklandArgentina} separately
(cf. Lemma \ref{JeyLoh}).
First, it is easy to see that $S_{n_0}$ is the graph of $J_{n_0}(0)$, and consequently $S^k_{n_0}$ is the graph of $J_{n_0}(0)^k$.
Similarly, we have that
   $$
    S_{\alpha}=  \{(x,  y) \in \mathbb C^{\alpha}\times\C^\alpha
\,:\, x=N_\alpha y\} = \{(N_\alpha y,  y)
\,:\, y\in \mathbb C^{\alpha} \},
 $$
and it is easy to see that its $k$-th power is
 \begin{equation}\label{Derulo}
 S_\alpha^k= \{(N_\alpha^k\ y,  y)\,:\, y\in \mathbb C^{\alpha} \}.
 \end{equation}
The third linear relation is of the form
\begin{align*}
\begin{split}
S_\beta &= \{(x,  y) \in \mathbb C^{\beta_1+m_\beta}\times\C^{\beta_1+m_\beta}
\,:\, L_\beta x=K_\beta y\}
\\ &=
\left\{\big((x_1, \ldots ,x_{\beta_1+m_\beta}) ,  (x_2,\ldots, x_{\beta_1}, y_1,\ldots y_{m_\beta+1}) \big) \in
\mathbb C^{\beta_1+m_\beta}\times\C^{\beta_1+m_\beta}\right\}.
\end{split}
\end{align*}
For $k<\beta_1$ the  $k$-th power is of the form
$$
S_\beta^k=\left\{\big((x_1, \ldots ,x_{\beta_1+m_\beta})) ,  (x_{k+1},\ldots, x_{\beta_1}, y_1, \ldots, y_{m_\beta +k}) \big)
\in
\mathbb C^{\beta_1+m_\beta}\times\C^{\beta_1+m_\beta}\right\},
$$
and for $k\geq \beta_1$ the first and the second components of the elements in
$S_\beta^k$ are completely unrelated to each other, that is, $S_\beta^k=\mathbb C^{\beta_1+m_\beta}\times\C^{\beta_1+m_\beta}$.
\smallskip

The last linear relation is of the form
\begin{align*}
\begin{split}
S_\gamma = \{(x,  y) \in \mathbb C^{\gamma_1-1}\times\C^{\gamma_1-1}
\,:\, L_\gamma^\top x=K_\gamma^\top y\}.
\end{split}
\end{align*}
On the one hand, if $\gamma_1=2$ then $S_\gamma = \{(0,0)\}$ and the same holds for $S_\gamma^k$.
On the other hand, if $\gamma_1 >2$ we obtain
\begin{align*}
\begin{split}
S_\gamma =
\left\{\big((x_1, \ldots ,x_{\gamma_1 -2},0),  (0, x_1,\ldots, x_{\gamma_1-2})\big)
\,:\, x_1,\ldots, x_{\gamma_1-2}  \in \mathbb C \right\}.
\end{split}
\end{align*}
For $k<\gamma_1-1$ the $k$-th power $S_\gamma^k$ is of the form
$$
\left\{\big( (x_{1}, \ldots ,x_{\gamma_1-1-k},0 , \ldots, 0),
(0 , \ldots, 0, x_{1},\ldots, x_{\gamma_1 -1-k})\big):
x_{1},\ldots, x_{\gamma_1 -1-k} \in \mathbb C \right\},
$$
and for $k\geq \gamma_1-1$ we have that $S_\gamma^k= \{(0,0)\}.$
% \end{enumerate}

Taking into account the decomposition \eqref{FalklandArgentina} and the above formulas for
$S_{n_0}^k, S_{\alpha}^k, S_{\beta}^k$,  and $S_{\gamma}^k$, it is easy to see that
\begin{align}\label{zwei}
\begin{split}
 N(S^k)=&  \{(x_1,\ldots, x_k, 0, \ldots, 0 ) \in \mathbb C^{n_0}\}
    \oplus \{0\}^\alpha\\[1ex]
  & \oplus \{(z_1,\ldots, z_k, 0, \ldots, 0, z_{\beta_1+1},
  \ldots , z_{\beta_1+m_\beta}) \in \mathbb C^{\beta_1+m_\beta}\}\oplus \{0\}^{\gamma_1 -1}
\end{split}
\end{align}
    and
\begin{equation}\label{drei}
    \mul(S^k)=  \{0\}^{n_0} \oplus N(N_\alpha^k) \oplus
    \{( 0, \ldots, 0, y_1,\ldots, y_{m_\beta +k})\in \mathbb C^{\beta_1+m_\beta}\}
    \oplus \{0\}^{\gamma_1 -1}.
\end{equation}
Hence,
\begin{align}\label{eins}
\begin{split}
\Rc_{0}(S)  =& \C^{n_{0}}\oplus  \{0\}^{\alpha}
\oplus\C^{\beta_1+m_\beta}\oplus\{0\}^{\gamma_1-1} \\[1ex]
\Rc_{\infty}(S)=&\{0\}^{n_0} \oplus  \mathbb C^{\alpha}
\oplus\C^{\beta_1+m_\beta}\oplus\{0\}^{\gamma_1-1}.
\end{split}
\end{align}
By definition, $\Rc_c(S)=\Rc_0(S)\cap\Rc_\infty(S)$ and we have
\begin{equation}\label{rcformula}
\Rc_c(S)=\{0\}^{n_0}\oplus \{0\}^{\alpha}\oplus \C^{\beta_1+m_\beta}\oplus \{0\}^{\gamma_1 -1}.
\end{equation}

Now, we use the \eqref{zwei}, \eqref{drei},\eqref{eins}, and
\eqref{rcformula} to compute the Weyr characteristics 
of $S$ introduced in Definition~\ref{CosteraAzul}.
The Weyr characteristic corresponding 
to the eigenvalue $\lambda=0$ has $n_0$ non zero entries.
Given $1\leq k\leq n_0$, 
\begin{align*}
W_k(0)=\dim\frac{N(S^k)+\Rc_c(S)}{N(S^{k-1})+\Rc_c(S)}
= \dim\frac{\{(x_1,\ldots, x_k, 0, \ldots, 0 ) \in \mathbb C^{n_0}\}}{\{(x_1,\ldots, x_{k-1}, 0, \ldots, 0 ) \in \mathbb C^{n_0}\}}=1,
\end{align*}
and the finite sequence $W$ is of the form $W=(1,\ldots,1)\in\C^{n_0}$, i.e. $W=w$. % (where we ignore zero entries).
Analogously, the Weyr characteristic corresponding to $\infty$ has $\alpha$ non zero entries which are given by
\begin{align*}
A_k=\dim\frac{\mul(S^k) +\Rc_c(S)}{\mul(S^{k-1})+\Rc_c(S)} = \dim\frac{N(N_\alpha^k)}{N(N_\alpha^{k-1})}=1, \quad 1\leq k\leq\alpha.
\end{align*}
Hence, the finite sequence $A$ is of the form $A=(1,\ldots,1)\in\C^\alpha$. Therefore, $A=a$.

The Weyr characteristic of the singular chains subspace is defined by
\begin{align*}
\begin{split}
B_k=&
\dim\frac{N (S^{k})\cap \Rc_c(S)}{N(S^{k-1})\cap \Rc_c(S)}\\[1ex]
=& \left\{
\begin{array}{ll}
\dim\{(z_1, 0, \ldots, 0, z_{\beta_1+1},
  \ldots , z_{\beta_1+m_\beta}) \in 
  \mathbb C^{\beta_1+m_\beta}\}=m_\beta +1, & 
\mbox{ if } k=1,\\[1ex]
 \dim\frac{\{(z_1,\ldots, z_k, 0, \ldots, 0, z_{\beta_1+1},
  \ldots , z_{\beta_1+m_\beta}) \in 
  \mathbb C^{\beta_1+m_\beta}\}}{\{(z_1,\ldots, z_{k-1}, 0, \ldots, 0, z_{\beta_1+1},
  \ldots , z_{\beta_1+m_\beta}) \in 
  \mathbb C^{\beta_1+m_\beta}\}}=1,&
\mbox{ if } 2\leq k\leq \beta_1,
\end{array}
\right.
\end{split}
\end{align*}
and  the finite sequence $B$ coincides with $b$.

Finally, to compute $C$, note that \eqref{Derulo}
and \eqref{eins} imply 
\begin{align*}
R(S^k)+\Rc_{0}(S) = \C^{n_{0}}\oplus\C^{\alpha}
\oplus\C^{\beta_1+m_\beta}\oplus R(S_\gamma^k).
\end{align*}
If $\gamma_1 >2$ then, 
\begin{align*}
C_1=\dim\frac{R(S)+\dom S}{R(S)+\Rc_{0}(S)}=\dim\frac{\C^{n_{0}}\oplus\C^{\alpha}
\oplus\C^{\beta_1+m_\beta}\oplus \big(R(S_\gamma)+\dom(S_\gamma)\big)}{\C^{n_{0}}\oplus\C^{\alpha}
\oplus\C^{\beta_1+m_\beta}\oplus R(S_\gamma)}=1,
\end{align*}
and, for $2\leq k\leq \gamma_1-2$,
\begin{align*}
C_k=\dim\frac{R(S^k)+\Rc_{0}(S)}{R(S^{k+1})+\Rc_{0}(S)}=
\dim\frac{\{(0, \ldots, 0, x_{1},\ldots, x_{\gamma_1 -1-k}) 
\in \mathbb C^{\gamma_1-1} \}}{\{(0, \ldots, 0, x_{1},\ldots, x_{\gamma_1 -1-(k+1)}) \in \mathbb C^{\gamma_1 -1}\}}=1.
\end{align*}
Therefore, the finite sequence $C$ is
of the form $C=(1,\ldots,1)\in\N^{\gamma_1-2}$. On the other hand, if $\gamma_1\leq 2$ then $C$ is the trivial sequence.

\smallskip

\emph{Case 2. }\ Assume that $\beta=(\beta_1)$ with $\beta_1>1$ or $\gamma=(\gamma_1)$ with $\gamma_1>1$. This case can be solved exactly in the same way as Case 1, the only difference is that in this case either $m_\beta=0$ or $m_\gamma=0$.

\smallskip

\emph{Case 3. }\ Assume that $\beta= (1,\ldots , 1)\in\N^{m_\beta}$ for $m_\beta\neq 0$ and that $\gamma= (\gamma_1,1,\ldots , 1)\in\N^{m_\gamma+1}$ with $\gamma_1>1$ and $m_\gamma\geq 0$.
%Let $\gamma\neq (1,\ldots , 1)$ and $\beta = (1,\ldots , 1)$ with $m_\beta$ entries. 
Also, assume that one (or both) of the first two entries of the Kronecker canonical form 
 in \eqref{KCF} is (are) present, then the  Kronecker canonical form  
of the pencil $sE-F$ has the form

 \begin{align}\label{camiloTutu}
 \begin{bmatrix} sI_{n_0}-J_0& 0& 0 &0 \\
 0& sN_\alpha-I_{\alpha}& 0_{\alpha \times m_\beta } & 0\\ %(n_0 + |\alpha| + |\gamma|)
 0&0& 0& sK_{\gamma}^\top-L_{\gamma}^\top\end{bmatrix},
 \end{align}
 where $ 0_{\alpha \times m_\beta}$ stands for the zero matrix with
$\alpha$ rows and  $m_\beta$ columns. On the contrary, if the first two entries of the Kronecker 
canonical form in \eqref{KCF} are not present, then the  Kronecker canonical form  
of the pencil $sE-F$ has the form
 \begin{align}\label{camiloTutuII}
 \begin{bmatrix}
  0_{|\gamma| \times m_\beta } & 
  sK_{\gamma}^\top-L_{\gamma}^\top\end{bmatrix}.
 \end{align}
In both cases $L_\beta$ and $K_\beta$ are zero matrices of appropriate sizes and we have that $S_\beta=\C^{m_\beta}\times \C^{m_\beta}$. Hence, the reducing sum decomposition \eqref{FalklandArgentina} reduces to $E^{-1}F=S_{n_0}\oplus S_\alpha\oplus (\C^{m_\beta}\times \C^{m_\beta})\oplus S_\gamma$, and we proceed as in Case 1 to show the statement.
%with $L_\beta$ and $K_\beta$ being zero matrices. Now one can proceed as in Case 1 to show the statement.

\smallskip

\emph{Case 4.}\ Assume that $\beta= (\beta_1,1,\ldots , 1)\in\N^{m_\beta+1}$ with $\beta_1>1$ and $m_\beta\geq 0$ and $\gamma = (1,\ldots , 1)\in\N^{m_\gamma}$ with $m_\gamma\neq 0$.
Then, the reducing sum  decomposition \eqref{FalklandArgentina} is of the form $E^{-1}F=S_{n_0}\oplus S_{\alpha} \oplus S_{\beta}$, where
%the Kronecker canonical form \eqref{KCF} 
%of the pencil $sE-F$ is of the form
%\eqref{Derulo4} and we obtain the following reducing sum  decomposition 
 %\begin{align*}
 %E^{-1}F= N[J_{0},-I]\oplus N[I_\alpha,-N_\alpha ]\oplus
   %N\left[\begin{bmatrix}
 %L_{\beta}\\
%0_{m_\gamma\times |\beta|}
%\end{bmatrix}, -\begin{bmatrix}
 %K_{\beta}\\
%0_{m_\gamma\times |\beta|}
%\end{bmatrix}
%\right],
%\end{align*}
%where $0_{m_\gamma\times |\beta|}$ stands for the zero matrix with
%$m_\gamma$ rows and  $|\beta|$ columns. Obviously we have
 \begin{align*}
S_\beta=   N\left[\begin{bmatrix}
 L_{\beta_1}\\
0_{m_\gamma\times |\beta_1|}
\end{bmatrix}, -\begin{bmatrix}
 K_{\beta_1}\\
0_{m_\gamma\times |\beta_1|}
\end{bmatrix}
\right] =  N\left[L_{\beta_1}, -K_{\beta_1}
\right],
\end{align*}
and the proof of the statement follows as in Case 1.

\smallskip

\emph{Case 5. }\ Let $\beta = (1,\ldots , 1)\in\N^{m_\beta}$ with $m_\beta\geq 1$ and $\gamma = (1,\ldots , 1)\in\N^{m_\gamma}$ with $m_\gamma\geq 1$. If one (or both) of the first two entries of the KCF in \eqref{KCF} is (are) present, then it is of the form
\begin{align*}
\begin{bmatrix} sI_{n_0}-J_0& 0& 0 \\
 0& sN_\alpha-I_{\alpha}& 0 \\ 
 0&0&  0_{m_\gamma \times m_\beta } 
\end{bmatrix}.
\end{align*}  
Then the reducing sum decomposition \eqref{FalklandArgentina} is of the form $E^{-1}F=S_{n_0}\oplus S_\alpha\oplus S_\beta$, where $S_\beta=\C^{m_\beta}\times\C^{m_\beta}$ because $L_\beta$ and $K_\beta$ are zero matrices of appropriate sizes. Finally, if the first two entries are not present, the Kronecker canonical form equals $0_{m_\gamma \times m_\beta}$ and the reduced sum decomposition is just $E^{-1}F=S_\beta=\C^{m_\beta}\times\C^{m_\beta}$. Therefore, in both cases $C_k=0$ for every $k\geq 1$, $B_1=m_\beta=b_1$ and the equalities $W=w$ and $A=a$ follows as in Case 1.
\end{proof}

Theorem~\ref{prop:KCFtoWeyr} shows how the Weyr characteristic $(w,a,b,c)$ of a matrix pencil determines the Weyr characteristic $(W,A,B,C)$ of its kernel representation. As is shown, the multi-index $C$ differs from $c$ by two entries, $c_1$ and $c_2$ vanish.

In order to reconstruct the Weyr characteristic of a matrix pencil from
its kernel representation, it is necessary to know not only the Weyr characteristic of the linear relation but also the sizes of the matrices in the kernel representation (these matrices are not uniquely given).

\begin{prop}
\label{prop:correspondence}
Let $S$ be a linear relation in $\mathbb C^{m}$ with Weyr characteristic $(W,A,B,C)$. If there exist matrices  $E,F\in\C^{n\times m}$ such that the kernel representation $E^{-1}F$ of the matrix pencil $P(s)=sE-F$ coincides with $S$, then the Weyr characteristic $(w,a,b,c)$ of the pencil $P(s)$ is given by
\begin{eqnarray*}%\label{Anitta22}
w=W, \quad a=A, \quad b=B,
\end{eqnarray*}
and, if $C=(C_1,\ldots, C_{n_C})$ is the last multi-index in the Weyr characteristic
of the  linear relation $S$ then
\begin{equation}\label{c}
c= (n-m+B_1, m-|W|-|A|-|B|-|C|, C_1, C_2, C_3,\ldots, C_{n_C}).
\end{equation}
\end{prop}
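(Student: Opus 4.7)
The plan is to observe that the content of the proposition is essentially the recovery of just the two entries $c_1$ and $c_2$, since everything else follows directly from Theorem~\ref{prop:KCFtoWeyr}. Indeed, that theorem asserts $W=w$, $A=a$, $B=b$, and $C=(c_3,\ldots,c_{n_c})$. So the only unknowns in the multi-index $c = (c_1, c_2, c_3, \ldots, c_{n_c})$ are its first two entries; everything reduces to expressing these in terms of $n$, $m$, and the given Weyr data.

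The strategy for extracting $c_1$ and $c_2$ is a bookkeeping argument: count the total number of rows and the total number of columns in the Kronecker canonical form of $P(s)$. Since strict equivalence preserves both sides of \eqref{DuaLipa} and the matrix sizes, I would work with $V(sE-F)U$ as in \eqref{KCF}. Reading off the block dimensions, which are implicit in the concrete representations used in the proof of Theorem~\ref{prop:KCFtoWeyr} (for example in the descriptions of $S_\beta$ and $S_\gamma$), the block $sI_{n_0}-J_0$ is $|w|\times|w|$, the block $sN_\alpha - I_{|\alpha|}$ is $|a|\times|a|$, each $\beta_i$-block contributes a $(\beta_i-1)\times\beta_i$ matrix, and each $\gamma_j$-block contributes a $\gamma_j\times(\gamma_j-1)$ matrix. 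Summed, the $\beta$-part occupies $|b|-b_1$ rows and $|b|$ columns, while the $\gamma$-part occupies $|c|$ rows and $|c|-c_1$ columns (noting $b_1$ is the length of $\beta$ and $c_1$ is the length of $\gamma$).

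Adding the contributions I would obtain the two identities
\[
n \;=\; |w|+|a|+|b|-b_1+|c|, \qquad m \;=\; |w|+|a|+|b|+|c|-c_1.
\]
Subtracting yields $c_1 = n-m+b_1 = n-m+B_1$, which is the first entry in \eqref{c}. For the second, I would exploit that $c$ is a finite partition and that its tail from index three onwards is $C$, so $c_1+c_2+|C|=|c|$; substituting $|c| = m-|w|-|a|-|b|+c_1$ from the column identity gives $c_2 = m-|w|-|a|-|b|-|C| = m-|W|-|A|-|B|-|C|$, as required.

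The argument is conceptually straightforward, and the only real obstacle is the asymmetric block geometry: the $\beta$-blocks are short in rows but long in columns, whereas the $\gamma$-blocks are long in rows but short in columns. This asymmetry is precisely what produces the nonsymmetric dependence on $n-m$ in the formula for $c_1$, and what forces a correction by $B_1$ rather than, say, by $C_1$. Beyond this combinatorial care, no new structural input is needed—the entire correspondence between linear relations and pencils is already delivered by Theorem~\ref{prop:KCFtoWeyr}.
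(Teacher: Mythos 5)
Your proposal is correct and follows essentially the same route as the paper: reduce to Kronecker canonical form via strict equivalence and Lemma~\ref{lem:invariance}, import $w=W$, $a=A$, $b=B$ and $c_k=C_{k-2}$ ($k\ge 3$) from Theorem~\ref{prop:KCFtoWeyr}, and then solve the row/column counting identities $n=|w|+|a|+(|b|-b_1)+|c|$ and $m=|w|+|a|+|b|+(|c|-c_1)$ for $c_1$ and $c_2$. The only (immaterial) difference is the order of elimination: you subtract the two identities to get $c_1=n-m+B_1$ first, whereas the paper extracts $c_2$ from the column count first and then back-substitutes for $c_1$.
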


\begin{proof}
Assume that $S=E^{-1}F$ is the kernel representation of the matrix pencil $P(s)$. Then, there exist invertible matrices $U\in \C^{n\times n}$ and $V\in\C^{m\times m}$ such that $U P(s) V= sUEV- UFV$ is in Kronecker canonical form. 
%Obviously, $(UEV)^{-1}(UFV)=V^{-1}E^{-1}U^{-1}UFV=V^{-1}E^{-1}FV$. On the other hand, 
Note that $(x,y)\in (UEV)^{-1}(UFV)$ if and only if $(Vx,Vy)\in E^{-1}F$.
%\begin{align*}
%(x,y)\in N[FU,-EU]\;\Leftrightarrow\; FUx=EUy\;\Leftrightarrow\; (Ux, Uy)\in
%N[F,-E].
%\end{align*}
This implies that $(UEV)^{-1}(UFV)$ and $E^{-1}F$ are strictly equivalent because
\begin{align*}
(UEV)^{-1}(UFV)=\begin{bmatrix}
V^{-1} & 0 \\ 0 & V^{-1}
\end{bmatrix}\cdot E^{-1}F.
\end{align*}
Hence, according to Lemma~\ref{lem:invariance}, they have the same Weyr characteristic. Therefore, we can assume without loss of generality that $P(s)$ is already in Kronecker canonical from.
%$(UEV)^{-1}(UFV)=\begin{bmatrix}
%V^{-1} & 0 \\ 0 & V^{-1}
%\end{bmatrix}\cdot E^{-1}F$ and, according to Lemma~\ref{lem:invariance}, the Weyr characteristic does not change. Therefore, we can assume without loss of generality that $P(s)$ is already in Kronecker canonical from.

Now, let $(w,a,b,c)$ be the Weyr characteristic of the pencil $P(s)$. 
It is possible to express the numbers
$m$ and $n$ of the matrix pencil $P(s)$ in~\eqref{KCF} in terms of the Weyr characteristic. In fact, $|w|$ and $|a|$ are the number of rows  (and columns) of $P(s)$ determined by the blocks associated to the finite and the infinity eigenvalues, respectively. On the other hand, $b_1$ and $c_1$ are the number of rectangular blocks corresponding to the pencils $sK_\beta-L_\beta$ and $sK_\gamma^\top-L_\gamma^\top$, respectively. In particular, the number of rows and columns of $P(s)$ determined by the pencil $sK_\beta-L_\beta$ are $|b|-b_1$ and $|b|$, respectively. Analogously, the number of rows and columns of $P(s)$ determined by the pencil $sK_\gamma^\top-L_\gamma^\top$ are $|c|$ and  $|c|-c_1$, respectively. Hence,
\begin{equation}\label{Norton}
n= |w|+|a|+(|b|-b_1)+|c| \quad \text{and}\quad m=|w|+|a|+|b|+(|c|-c_1).
\end{equation}

%each index $w_i$ (resp. $a_i$) in the muti-index $w$ (resp. $a$) corresponds to a block of size $w_i\times w_i$ (resp. $a_i\times a_i$) associated to a complex eigenvalue (resp. to $\infty$). Analogously, each index $b_i$ (resp. $c_i$) in the muti-index $b$ (resp. $c$) corresponds to a block of size $(w_i-1)\times w_i$ (resp. $a_i\times a_i$) associated to a matrix pencil $sK_i-L_i$ (resp. $sK_i^\top-L_i^\top$. Moreover,

By Theorem \ref{prop:KCFtoWeyr}, we have that
\begin{equation*}
w=W, \quad a=A, \quad \text{and} \quad b=B.
\end{equation*}
Also, we know the indices in $c$ with the exception of the first two: $c_k=C_{k-2}$ if $k\geq 3$.
 %Moreover, $c_1$ and $c_2$ are bigger than (or equal to) $C_1$, and 
%So, consider $m_1,m_2\in \N$ such that $c_2=C_1 + m_1$ and $c_1=C_1 + m_1 + m_2$. Then,
%\begin{align*}
%|c|-c_1=|C| + c_2=|C| + C_1 + m_1 \quad \text{and} \quad |c|=|C| + 2C_1 + 2m_1 + m_2.
%\end{align*}
%Replacing the above formulas in \eqref{Norton}, we obtain
%\begin{align*}
%n &= |W|+ |A| + (|B|-B_1) + |C| + 2C_1 + 2m_1 + m_2, \\
%m &= |W| + |A| + |B| + |C| + C_1 + m_1.
%\end{align*}
%Thus,
%\begin{align*}
%c_2 &= C_1+m_1= m - |W| - |A| - |B| - |C|, \quad \text{and}\\
%c_1 &= C_1+m_1+m_2= n- |W|- |A| - (|B|-B_1) - |C|-(C_1+m_1)\\
%&= n- |W|- |A| - (|B|-B_1) - |C|-(m - |W| - |A| - |B| - |C|)\\
%&= n-m + B_1,
%\end{align*}
%\begin{align*}
%m_1 &=  m - |W| - |A| - |B| - |C|-C_1, \quad \text{and}\\
%m_2 &=  n - |W| - |A| - (|B|-B_1) - |C|-2C_1 -2m_1\\
%&= n - |W| - |A| - (|B|-B_1) - |C|-2C_1 -2( m - |W| - |A| - |B| - |C|-C_1)\\
%&= n-2m + |W| + |A| + |B| + B_1 + |C|,
%\end{align*}
Moreover, $c_1\geq c_2\geq c_3=C_1$ and
\begin{align*}
%|c|-c_1=|C| + c_2 \quad \text{and} \quad 
|c|=|C| + c_1 + c_2.
\end{align*}
Replacing the above formula in \eqref{Norton}, we obtain
\begin{align*}
n &= |W|+ |A| + (|B|-B_1) + |C| + c_1 + c_2, \\
m &= |W| + |A| + |B| + |C| + c_2.
\end{align*}
Thus,
\begin{align*}
c_2 &=  m - |W| - |A| - |B| - |C|, \quad \text{and}\\
c_1 &=  n- |W|- |A| - (|B|-B_1) - |C|-c_2\\
&= n- |W|- |A| - (|B|-B_1) - |C|-(m - |W| - |A| - |B| - |C|)\\
&= n-m + B_1.
\end{align*}
\end{proof}

\begin{cor}\label{minrow}
Let $S$ be a linear relation in $\mathbb C^{m}$ and assume that its Weyr characteristic is $(W,A,B,C)$. Then,
\begin{align*}
n=2m-|W|-|A|-|B|-B_1-|C|,
\end{align*}
is the smallest possible number of rows for matrices $E,F\in\C^{n\times m}$ such that the kernel representation $E^{-1}F$ of the matrix pencil $P(s)=sE-F$ coincides with $S$. In this case, the Weyr characteristic $(w,a,b,c)$ of the pencil $P(s)$ is given by
\begin{eqnarray*}
w=W, \quad a=A, \quad b=B,
\end{eqnarray*}
and, if $C=(C_1,\ldots, C_{n_c})$ is the last multi-index in the Weyr characteristic
of the  linear relation $S$, then
\begin{align*}
c= (m - |W| - |A| - |B| - |C|, m - |W| - |A| - |B| - |C|, C_1, C_2, C_3,\ldots, C_{n_C}).
\end{align*}
\end{cor}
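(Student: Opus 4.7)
The plan is to prove the lower bound on $n$ first, then construct a pair $(E,F)$ attaining it and read off the Weyr characteristic of the associated pencil from Proposition~\ref{prop:correspondence}.

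First I would invoke Proposition~\ref{prop:correspondence}: whenever $E,F\in\C^{n\times m}$ satisfy $E^{-1}F=S$, the Weyr characteristic $(w,a,b,c)$ of the pencil $P(s)=sE-F$ has
\begin{equation*}
c=\big(n-m+B_1,\;m-|W|-|A|-|B|-|C|,\;C_1,\ldots,C_{n_C}\big).
\end{equation*}
Since $c$ is a conjugate partition, it is non-increasing, so the constraint $c_1\geq c_2$ forces $n\geq 2m-|W|-|A|-|B|-B_1-|C|$. Observe that at least one kernel representation of $S$ always exists (one may write $S$ as the kernel of a matrix of the form $[F,-E]$ of size $(2m-\dim S)\times 2m$). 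Applying the displayed formula to such a representation and using $c_2\geq c_3$ yields the purely numerical inequality $m-|W|-|A|-|B|-|C|\geq C_1$, which will be needed for what follows.

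Next, set $n_0:=2m-|W|-|A|-|B|-B_1-|C|$ and define
\begin{equation*}
c^*:=\big(m-|W|-|A|-|B|-|C|,\;m-|W|-|A|-|B|-|C|,\;C_1,\ldots,C_{n_C}\big).
\end{equation*}
By the inequality just noted, $c^*$ is non-increasing and therefore qualifies as a valid Weyr-type multi-index. Let $P^*(s)=sE^*-F^*$ be a pencil in Kronecker canonical form realizing the Weyr characteristic $(W,A,B,c^*)$; such a pencil is produced by simply writing down the block-diagonal form recalled in Appendix~\ref{sec:kron}. The dimension formulas~\eqref{Norton} appearing in the proof of Proposition~\ref{prop:correspondence} then yield $E^*,F^*\in\C^{n_0\times m}$.

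Finally, Theorem~\ref{prop:KCFtoWeyr} guarantees that the kernel representation $S^*:=(E^*)^{-1}F^*$ has Weyr characteristic $(W,A,B,C)$, the same as that of $S$. As stated in the introduction, two linear relations in $\C^m$ with identical Weyr characteristics are strictly equivalent, so there exists an invertible $V\in\C^{m\times m}$ with $S=\diag(V,V)\cdot S^*$. A short manipulation of the kind used in the proof of Proposition~\ref{prop:correspondence} shows that $\diag(V,V)\cdot(E^*)^{-1}F^*=(E^*V^{-1})^{-1}(F^*V^{-1})$, so the pair $E:=E^*V^{-1}$, $F:=F^*V^{-1}$ lies in $\C^{n_0\times m}$ and realizes $S$ as a kernel representation. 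Applying Proposition~\ref{prop:correspondence} to this specific pair delivers the stated formulas for $w,a,b,c$. The main technical obstacle is the appeal to the converse of Lemma~\ref{lem:invariance}, i.e.\ the fact that equal Weyr characteristics imply strict equivalence; should that direction not yet be available at this point of the paper, it can be bypassed by building $(E^*,F^*)$ directly from the reducing sum decomposition~\eqref{DemiLovato} of $S$, assembling a Kronecker block for each summand exactly as in Cases 1--5 of the proof of Theorem~\ref{prop:KCFtoWeyr}.
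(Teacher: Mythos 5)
Your lower-bound argument is exactly the paper's: both proofs feed the formula \eqref{c} from Proposition \ref{prop:correspondence} and the dimension count \eqref{Norton} into the monotonicity constraint $c_1\ge c_2$ to force $n\ge 2m-|W|-|A|-|B|-B_1-|C|$, and both read off $w,a,b,c$ once $c_1=c_2$ is imposed. Where you genuinely add something is attainability: the paper stops at the necessary condition (``the minimum of $|c|$ is attained only if $c_1=c_2=c_3$'') and never exhibits a pencil with the minimal number of rows, whereas you first extract the numerical inequality $m-|W|-|A|-|B|-|C|\ge C_1$ from an arbitrary kernel representation (which always exists, as you note) to certify that $c^*$ is a legitimate non-increasing multi-index, and then build a Kronecker canonical form realizing $(W,A,B,c^*)$ of the right size. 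The one point that must be handled with care is your route through the converse of Lemma \ref{lem:invariance}: that converse is Theorem \ref{Weeknd}, which the paper proves \emph{after} and \emph{by means of} Corollary \ref{minrow} (its proof begins by choosing $n$ to be the minimal value from this very corollary), so invoking it here is circular. You flag this yourself, and your fallback --- assembling $(E^*,F^*)$ block by block from the reducing sum decomposition \eqref{DemiLovato} of $S$, one Kronecker block per summand as in Cases 1--5 of Theorem \ref{prop:KCFtoWeyr} --- is the correct repair, though it quietly relies on canonical forms for the completely singular part and the multishift part that the paper only cites. With that substitution your argument is complete and in fact strictly stronger than the printed proof, which establishes only the necessity direction.
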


\begin{proof}
Assume that $E,F\in\C^{n\times m}$ are such that the kernel representation $E^{-1}F$ of the matrix pencil $P(s)=sE-F$ coincides with $S$. By Proposition~\ref{prop:correspondence}, the Weyr characteristic $(w,a,b,c)$ of the pencil $P(s)$ is such that $w=W$, $a=A$, $b=B$, and $c$ is given by \eqref{c}.

First of all, let us show that the proposed minimal number of rows is positive. According to \eqref{Norton}, $m=|W|+|A|+|B|+|C|+c_2$. Then,
\begin{align*}
2m-|W|-|A|-|B|-B_1-|C| &=m -B_1 + (m-|W|-|A|-|B|-|C|) \\ &=(m-B_1) + c_2\geq 1. 
\end{align*}
Also, according to \eqref{Norton} we have that $n= |w|+|a|+(|b|-b_1)+|c|$. Then, in order to minimize $n$ it is necessary to minimize $|c|$, and the minimum of $|c|$ is attained only if $c_1=c_2=c_3$.
Therefore, using the expressions of $c_1$ and $c_2$ in \eqref{c} we get $n-m+B_1=m-|W|-|A|-|B|-|C|$, i.e.
\begin{align*}
n=2m - |W|-|A|-|B|-B_1-|C|.
\end{align*}
\end{proof}

%\begin{remark} The above corollary can also be derived from \cite[Theorem 3.3]{BTW16}, taking into account that
%\begin{align*}d:=\dim S= |W|-|A|-|B|-B_1-|C|.\end{align*}
%Moreover, $n=2m - |W|-|A|-|B|-B_1-|C|$ is exactly the rank of the associated matrix pencil $P(s)=sE-F$.
%\end{remark}

In the following we are going to prove the converse of Lemma \ref{lem:invariance}, i.e.\ if two linear relations $S_1$ and $S_2$ in $\C^m$ have the same Weyr characteristic, then they are strictly equivalent, see Theorem \ref{Weeknd} below.

\begin{prop}\label{casi}
Given $E_1,E_2,F_1,F_2\in\C^{n\times m}$, consider the matrix pencils $P_1(s)=sE_1-F_1$ and $P_2(s)=sE_2-F_2$. Let $S_1=E_1^{-1}F_1$ and $S_2=E_2^{-1}F_2$ be their corresponding kernel representations. If $P_1(s)$ is strictly equivalent to $P_2(s)$, 
%(or, what is the same, their Kronecker canonical form coincide)
then $S_1$ is strictly equivalent to $S_2$. In this case, the Weyr
characteristics of $S_1$ and $S_2$ coincide.
\end{prop}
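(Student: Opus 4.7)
The plan is to unpack the definition of strict equivalence for pencils, push the transformation through the kernel representation, and then invoke Lemma~\ref{lem:invariance} for the Weyr statement.

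First, since $P_1(s)$ and $P_2(s)$ are strictly equivalent, there exist invertible matrices $U\in\C^{n\times n}$ and $V\in\C^{m\times m}$ such that $U(sE_1-F_1)V = sE_2-F_2$, which means $E_2=UE_1V$ and $F_2=UF_1V$. The pair $(x,y)$ belongs to $S_2=E_2^{-1}F_2$ exactly when $F_2x=E_2y$, i.e.\ $UF_1Vx=UE_1Vy$. Since $U$ is invertible, this is equivalent to $F_1(Vx)=E_1(Vy)$, i.e.\ $(Vx,Vy)\in S_1$. Setting $T:=V^{-1}$, this is precisely
\begin{align*}
S_2=\begin{bmatrix}T&0\\ 0&T\end{bmatrix}\cdot S_1,
\end{align*}
so $S_1$ and $S_2$ are strictly equivalent in the sense of~\eqref{DuaLipa}.

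The second claim then follows immediately from Lemma~\ref{lem:invariance}, which states that strictly equivalent linear relations share the same Weyr characteristic.

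There is no real obstacle here: the only thing to be careful about is that the matrix acting on the pair $(x,y)$ to realize the strict equivalence is $V^{-1}$ (not $V$), because the invertible matrix $V$ appears on the right of the pencil and therefore acts on the domain variable of $S_1$. Once this is noted, the rest is a one-line computation.
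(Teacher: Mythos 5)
Your proof is correct and follows essentially the same route as the paper: unpack strict equivalence of the pencils as $E_2=UE_1V$, $F_2=UF_1V$, cancel the invertible $U$ in the defining equation $F_2x=E_2y$, identify the transforming matrix as $T=V^{-1}$, and conclude via Lemma~\ref{lem:invariance}. Your closing remark about $V^{-1}$ (rather than $V$) being the matrix that realizes the strict equivalence of the relations is exactly the point the paper's computation makes as well.
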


\begin{proof}
Assume that $P_1(s)$ is strictly equivalent to $P_2(s)$, i.e.
there exist invertible matrices $U\in\C^{n\times n}$ and $V\in \C^{m\times m}$ such that
\begin{align*}
E_2=UE_1V \quad \text{and} \quad F_2=UF_1V,
\end{align*}
 (cf.\ Appendix \ref{sec:kron}). Then, $S_2=E_2^{-1}F_2=(UE_1V)^{-1}(UF_1V)$, or equivalently,
\begin{align*}
S_2 &= \left\{ (x,y): \ F_1Vx=E_1Vy \right\} \\ &=\begin{pmatrix}
V^{-1}&0 \\ 0& V^{-1}
\end{pmatrix}\cdot\left\{ (Vx,Vy): \ F_1Vx=E_1Vy \right\} \\
&= \begin{pmatrix}
T&0 \\ 0&T
\end{pmatrix}\cdot S_1,
\end{align*}
where $T:=V^{-1}$. Therefore, $S_1$ is strictly equivalent to $S_2$. The last statement about the Weyr characteristics  
follows from Lemma \ref{lem:invariance}.
\end{proof}

\begin{theorem}\label{Weeknd}
Let $S_1$ and $S_2$ be linear relations in $\C^m$. Then their Weyr
characteristics coincide if and only 
if the linear relations $S_1$ and $S_2$  are strictly equivalent.
\end{theorem}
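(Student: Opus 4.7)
The forward direction is exactly Lemma \ref{lem:invariance}. So the work lies in showing that coinciding Weyr characteristics force strict equivalence.

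My plan is to reduce the problem to the matrix-pencil case, where strict equivalence is governed by the Kronecker canonical form. Given two linear relations $S_1, S_2 \subset \C^m \times \C^m$ with the same Weyr characteristic $(W,A,B,C)$, I would first produce kernel representations of both relations with matrices of the same size. For any linear relation $S$ in $\C^m$, one can find $E, F \in \C^{n \times m}$ with $E^{-1}F = S$ by choosing the rows of $[F, -E]$ to be a basis of the annihilator of $S$ in $(\C^m \times \C^m)^*$; the minimal such $n$ is $2m - \dim S$. Since the Weyr characteristic determines the dimension of a linear relation (it determines the dimensions of all root manifolds and of $\calR_c(S)$, from which $\dim S$ is recoverable), the relations $S_1$ and $S_2$ have the same dimension, and hence admit kernel representations $S_i = E_i^{-1} F_i$ with $E_i, F_i \in \C^{n \times m}$ for a common $n$; taking $n$ as in Corollary \ref{minrow} works.

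Next, I would apply Proposition \ref{prop:correspondence} to both pencils $P_i(s) = sE_i - F_i$. That proposition expresses the Weyr characteristic $(w_i, a_i, b_i, c_i)$ of $P_i(s)$ entirely in terms of the Weyr characteristic $(W,A,B,C)$ of $S_i$ together with the common integers $m$ and $n$. Since the Weyr characteristics of $S_1$ and $S_2$ agree by hypothesis, it follows that $P_1(s)$ and $P_2(s)$ have the same Weyr characteristic, i.e.\ $(w_1,a_1,b_1,c_1) = (w_2,a_2,b_2,c_2)$.

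The final step is to pass back from pencil equivalence to relation equivalence. The Weyr characteristic of a matrix pencil is obtained from its Kronecker invariants (the sizes of the Jordan blocks of $J_0$ at each eigenvalue together with the multi-indices $\alpha$, $\beta$, $\gamma$) by forming conjugate partitions, and conjugation is a bijection between non-increasing partitions. Hence, $P_1(s)$ and $P_2(s)$ have identical Kronecker invariants and therefore identical Kronecker canonical forms, so they are strictly equivalent as pencils. Proposition \ref{casi} then immediately yields that $S_1 = E_1^{-1}F_1$ and $S_2 = E_2^{-1}F_2$ are strictly equivalent as linear relations, completing the proof.

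The only real obstacle is the first step: making sure that one can realize $S_1$ and $S_2$ as kernel representations with matrices of a common size $n$. This is essentially a linear-algebra fact (any subspace of $\C^m \times \C^m$ is the kernel of a row block matrix $[F,-E]$), combined with the observation that equal Weyr characteristics force equal dimensions, so that the minimal $n$ from Corollary \ref{minrow} can be used for both relations simultaneously. Once that is in place, the chain Corollary \ref{minrow} $\Rightarrow$ pencils have equal Weyr characteristic $\Rightarrow$ equal Kronecker canonical form $\Rightarrow$ strict equivalence of pencils $\Rightarrow$ strict equivalence of relations (via Proposition \ref{casi}) runs without further difficulty.
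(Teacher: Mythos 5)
Your proposal is correct and follows essentially the same route as the paper: realize both relations as kernel representations with the common minimal row number $n$ from Corollary \ref{minrow}, conclude via Proposition \ref{prop:correspondence} (equivalently Corollary \ref{minrow}) that the two pencils share the same Weyr characteristic and hence the same Kronecker canonical form, and then transfer strict equivalence back to the relations with Proposition \ref{casi}. The only difference is that you spell out the justifications (the annihilator construction of a kernel representation and the bijectivity of conjugation of partitions) that the paper leaves implicit.
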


\begin{proof}
On the one hand, in Lemma \ref{lem:invariance} we showed that if $S_1$ and $S_2$  are strictly equivalent then their Weyr
characteristics coincide.

On the other hand, if $S_1$ and $S_2$ have the same Weyr characteristic then, defining $n$ as the minimal value given in Corollary~\ref{minrow}, there exist matrices $E_1,F_1,E_2,F_2\in\C^{n\times m}$ such that
$$
S_1=E_1^{-1}F_1 \quad \text{and} \quad S_2=E_2^{-1}F_2.
$$
Moreover, the Weyr characteristics of the associated matrix pencils $P_1(s)=sE_1-F_1$ and $P_2(s)=sE_2-F_2$ coincide. Therefore, $P_1$ is strictly equivalent to $P_2$ and, by Proposition \ref{casi}, the kernel representations $S_1$ and $S_2$ are also strictly equivalent.
%The other inclusion follows from Lemma \ref{lem:invariance}.
\end{proof}

\section{Weyr characteristics of a pencil and its range representation}\label{sec:weyr_range}

Along this section we obtain results on the correspondence between the Kronecker canonical form of a matrix pencil and  the Weyr characteristic of its range representation. They are similar to those presented in Section \ref{sec:weyr_kernel}.

\begin{theorem}
\label{prop:KCFtoWeyr2}
Let $P(s)=sE-F$ be a matrix pencil with matrices 
$E$ and $F$ in $\mathbb C^{n\times m}$ and with Weyr  characteristic $w(P) = (w,a,b,c)$.
If $(W,A,B,C)$ is the Weyr characteristic of the range representation $FE^{-1}$ then
\begin{equation*}%\label{Minage2}
W=w,\quad A=a,
%\begin{bmatrix}
%W(\lambda_1)\\
%\vdots \\
%W(\lambda_{n_{J_0}})\\
%A\\
%B\\
%C\\
%\end{bmatrix}=\begin{bmatrix}
%w(\lambda_1)\\
%\vdots \\
%w(\lambda_{n_{J_0}})\\
%a\\
%(b_2,\ldots, b_{n_b})\\
%(c_2,\ldots, c_{n_c})
%\end{bmatrix},
\end{equation*}
and, if $b=(b_1,\ldots, b_{n_b})$ and $c=(c_1,\ldots, c_{n_c})$, then
% are the last two entries in the Weyr characteristic\eqref{Eminem} of the pencil $P(s)$
\begin{align*}
B=(b_2,\ldots, b_{n_b}) \quad \text{and} \quad C=(c_2,\ldots, c_{n_c}).
\end{align*}
\end{theorem}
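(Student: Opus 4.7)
The plan is to mirror the proof of Theorem \ref{prop:KCFtoWeyr}. First I would establish a range-representation analogue of Proposition \ref{casi}: if $sE_1-F_1$ and $sE_2-F_2$ are strictly equivalent with $E_2=UE_1V$ and $F_2=UF_1V$ for invertible $U,V$, then
\begin{equation*}
F_2E_2^{-1}=(UF_1V)(UE_1V)^{-1}=UF_1E_1^{-1}U^{-1},
\end{equation*}
so the range representations are strictly equivalent via $U$. Combined with Lemma \ref{lem:invariance}, both sides of the claimed equalities are invariant under strict equivalence of pencils, so we may assume $P(s)=sE-F$ is in Kronecker canonical form \eqref{KCF}. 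Then
\begin{equation*}
FE^{-1}=T_{n_0}\oplus T_\alpha\oplus T_\beta\oplus T_\gamma
\end{equation*}
is a reducing sum decomposition, where $T_{n_0}=\{(x,J_0x):x\in\C^{n_0}\}$, $T_\alpha=\{(N_\alpha y,y):y\in\C^{|\alpha|}\}$, $T_\beta=\{(K_\beta x,L_\beta x):x\in\C^{|\beta|}\}\subset\C^{|\beta|-n_\beta}\times\C^{|\beta|-n_\beta}$, and $T_\gamma=\{(K_\gamma^\top y,L_\gamma^\top y):y\in\C^{|\gamma|-n_\gamma}\}\subset\C^{|\gamma|}\times\C^{|\gamma|}$. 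By Lemma \ref{JeyLoh} the Weyr characteristic of $FE^{-1}$ is the sum of those of the summands, and each of $T_\beta,T_\gamma$ decomposes further along the individual entries of $\beta$ and $\gamma$.

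The blocks $T_{n_0}$ and $T_\alpha$ coincide as sets of pairs with the corresponding blocks $S_{n_0}$ and $S_\alpha$ of the kernel decomposition \eqref{FalklandArgentina} (because $E=I_{n_0}$ in the first block and $F=I_{|\alpha|}$ in the second), so the computations from the proof of Theorem \ref{prop:KCFtoWeyr} immediately give $W=w$ and $A=a$. For a single part $\beta_i=n$ the range block is
\begin{equation*}
T^\beta_{(n)}=\{((x_1,\ldots,x_{n-1}),(x_2,\ldots,x_n)):x\in\C^n\}.
\end{equation*}
If $n=1$, this lives in $\C^0\times\C^0$ and is trivial. For $n\geq 2$, an induction on $k$ shows that $(T^\beta_{(n)})^k$ leaves the first argument and the last $n-1-k$ components of the second argument independent, so $(T^\beta_{(n)})^{n-1}=\C^{n-1}\times\C^{n-1}$; hence $T^\beta_{(n)}$ is completely singular with $\Rc_c=\C^{n-1}$ and $\dim N((T^\beta_{(n)})^k)=k$ for $1\leq k\leq n-1$, so its $B$-contribution is $(1,\ldots,1)\in\N^{n-1}$. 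Summing over parts $\beta_i\geq 2$ yields the conjugate partition of $(\beta_1-1,\beta_2-1,\ldots)$, which is $(b_2,\ldots,b_{n_b})$; the singletons $\beta_i=1$ are invisible, and this is precisely what erases the leading entry $b_1=n_\beta$. The analogous computation for $\gamma_i=n\geq 2$ produces
\begin{equation*}
T^\gamma_{(n)}=\{((y_1,\ldots,y_{n-1},0),(0,y_1,\ldots,y_{n-1})):y\in\C^{n-1}\},
\end{equation*}
which is a single multi-shift chain of length $n-1$ generated by $e_1,\ldots,e_n\in\C^n$, contributing $(1,\ldots,1)\in\N^{n-1}$ to $C$; again $\gamma_i=1$ contributes nothing, so $C=(c_2,\ldots,c_{n_c})$.

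The main technical obstacle is the case analysis parallelling Cases 1--5 in the proof of Theorem \ref{prop:KCFtoWeyr}, together with careful bookkeeping of how $N(T^k)$, $\mul(T^k)$, $R(T^k)$, and $\Rc_c(T)$ split across blocks when $\beta$ and $\gamma$ mix singleton and larger parts, and when either of the first two block types in \eqref{KCF} is absent. The conceptual novelty relative to the kernel case is a uniform one-column shift of both $b$ and $c$: in the range representation the singletons of \emph{both} $\beta$ and $\gamma$ become invisible, whereas in the kernel representation the singletons of $\beta$ still populate $B_1$ while both singletons and parts of size two of $\gamma$ are invisible, which gives the asymmetric shifts $B=b$ and $C=(c_3,\ldots,c_{n_c})$ found in Theorem \ref{prop:KCFtoWeyr}.
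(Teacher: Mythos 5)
Your proposal is correct and follows essentially the same route as the paper: reduction to Kronecker canonical form via strict equivalence of the range representations, the block decomposition $FE^{-1}=T_{n_0}\oplus T_\alpha\oplus T_\beta\oplus T_\gamma$ of \eqref{Heino}, and explicit computation of the powers of each block to read off $N(T^k)$, $\mul(T^k)$, $R(T^k)$ and $\Rc_c(T)$. The only difference is organizational: by splitting $T_\beta$ and $T_\gamma$ down to the individual entries of $\beta$ and $\gamma$ and summing the single-block contributions via Lemma \ref{JeyLoh}, you replace the paper's five-case analysis (which groups the singleton entries with the largest part) by a uniform conjugate-partition count, and your per-block formulas agree with the paper's.
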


The proof of Theorem \ref{prop:KCFtoWeyr2} has the same
structure as the proof of Theorem \ref{prop:KCFtoWeyr}. We concentrate our attention only on those calculations which are different.

\begin{proof}
There is no restriction to assume that $P(s)=sE-F$ is in Kronecker canonical form. As in the proof of Theorem \ref{prop:KCFtoWeyr},
we assume that the matrix $J_0$ in
 \eqref{KCF} contains only a single Jordan block $J_{n_0}(0)$ of size $n_0$ associated to the eigenvalue $\lambda=0$
and that the multi-index $\alpha$ consists of one entry only.
Once again, there is no restriction in assuming that $\beta$ is either $\beta=(\beta_1)$, $\beta=(\beta_1,1,\ldots,1)$ or $\beta=(1,\ldots,1)$ for some $m_\beta>1$, and the same applies to $\gamma$.

The Kronecker canonical form induces a reducing sum decomposition 
of the range representation $FE^{-1}$ as
 \begin{align}\label{Heino}
FE^{-1} = T_{n_0}\oplus
  T_\alpha\oplus
   T_\beta\oplus
    T_\gamma,
\end{align}
where 
$$
T_{n_0}:=R\begin{bmatrix}I\\ J_0\end{bmatrix}, \  T_\alpha:=R\begin{bmatrix}N_\alpha\\ I_\alpha\end{bmatrix}, \  T_\beta:=R\begin{bmatrix}
K_{\beta}\\ L_{\beta}\end{bmatrix}, \mbox{ and } T_\gamma:=R\begin{bmatrix} K_{\gamma}^\top\\ L_{\gamma}^\top
\end{bmatrix}.
$$

\emph{Case 1. }\ If $\beta = (\beta_1,1,\ldots , 1)\in\N^{m_\beta+1}$ with $\beta_1>1$ and $\gamma= (\gamma_1,1,\ldots , 1)\in\N^{m_\gamma+1}$ with $\gamma_1>1$ then
the Weyr characteristic of the pencil $P(s)$ is given again by \eqref{Jungle}.

As in the proof of Theorem \ref{prop:KCFtoWeyr}, we calculate the powers of the four linear 
relations in \eqref{Heino} separately (cf. Lemma \ref{JeyLoh}).
Obviously, $T_{n_0}$ is the graph of $J_0$, and consequently $T^k_{n_0}$ is the graph of $J_0^k$.
Similarly, 
\begin{equation}\label{Scooter}
    T_{\alpha}=  \{(N_\alpha y,  y)\,:\, y\in \mathbb C^{\alpha} \}
\mbox{ and } T_\alpha^k= \{(N_\alpha^k y,  y)\,:\, y\in \mathbb C^{\alpha} \}.
\end{equation}

The third linear relation is of the form 
\begin{align*}
T_\beta=
\left\{\big((y_1, x_1, \ldots ,x_{\beta_1-2}) ,  (x_1,\ldots, x_{\beta_1-2}, z_1)\big) 
: (x_1,\ldots, x_{\beta_1-2})\in \mathbb C^{\beta_1-2}, y_1, z_1\in \mathbb C \right\}.
\end{align*}
For $k\geq \beta_1-1$ we have that $T_\beta^k=\mathbb C^{\beta_1-1}\times\C^{\beta_1-1}$, and
for $k<\beta_1-1$ the pairs in the $k$-th power $T_\beta^k$ are of the form
$$
\big((y_1,\ldots, y_k, x_1, \ldots ,x_{\beta_1-k-1}) ,  (x_1,\ldots, x_{\beta_1-k-1}, z_1,
\ldots, z_k)\big),
$$
where $x_1,\ldots, x_{\beta_1-k-1},y_1, \ldots y_k, z_1,\ldots, z_k\in \mathbb C$.

The pairs in the last linear relation $T_\gamma$ are of the form
\begin{align*}
\begin{split}
\big( (x_1, \ldots ,x_{\gamma_1 -1}, 0, \ldots ,0), (0, x_1,\ldots, x_{\gamma_1-1}, 0, \ldots, 0)\big)
\in \mathbb C^{\gamma_1 + m_\gamma} \times  \mathbb C^{\gamma_1 + m_\gamma},
\end{split}
\end{align*}
where $ x_1,\ldots, x_{\gamma_1 -1}  \in \mathbb C$.
For $k\geq \gamma_1$ we have that
$T_\gamma^k= \{(0,0)\}$ and,
for $k<\gamma_1$ the pairs in the $k$-th power $T_\gamma^k$ are given by
$$
\big((x_{1}, \ldots ,x_{\gamma_1-k},0,\ldots,0), (0,\ldots,0,  x_{1},\ldots, x_{\gamma_1 -k}, 0, \ldots, 0)\big)
\in \mathbb C^{\gamma_1 + m_\gamma} \times  \mathbb C^{\gamma_1 + m_\gamma}.
$$
If $T:=FE^{-1}$ then the following descriptions hold:
\begin{align*}%\label{zwei2}
\begin{split}
 N(T^k)=&  \{(x_1,\ldots, x_k, 0, \ldots, 0 ) \in \mathbb C^{n_0}\}
    \oplus \{0\}^\alpha \\[1ex]
  & \oplus \{(y_1,\ldots, y_k, 0, \ldots, 0) \in \mathbb C^{\beta_1-1}\}\oplus \{0\}^{\gamma_1 +m_\gamma}
\end{split}
\end{align*}
    and
\begin{equation*}%\label{drei2}
    \mul(T^k)=  \{0\}^{n_0} \oplus N(N_\alpha^k) \oplus
    \{(z_1,\ldots, z_{k}, 0, \ldots, 0)\in \mathbb C^{\beta_1-1}\}
    \oplus \{0\}^{\gamma_1 +m_\gamma}.
\end{equation*}
Hence,
\begin{align}\label{eins2}
\begin{split}
\Rc_{0}(T)  =& \C^{n_{0}}\oplus  \{0\}^{\alpha}
\oplus\C^{\beta_1-1}\oplus\{0\}^{\gamma_1+m_\gamma}, \\[1ex]
\Rc_{\infty}(T)=&\{0\}^{n_0} \oplus  \mathbb C^{\alpha}
\oplus\C^{\beta_1-1}\oplus\{0\}^{\gamma_1+m_\gamma}, \\[1ex]
\Rc_c(T)=&\{0\}^{n_0}\oplus \{0\}^{\alpha}\oplus \C^{\beta_1-1}\oplus \{0\}^{\gamma_1 +m_\gamma}.
\end{split}
\end{align}
Therefore, the first two parts of the Weyr characteristics coincide,
\begin{equation*}
W=w \quad \text{and} \quad A=a.
\end{equation*}
The entries of the Weyr characteristic of the singular chains subspace are
\begin{align*}
B_k=\dim\frac{N (T^{k})\cap \Rc_c(T)}{N( T^{k-1})\cap \Rc_c(T)}=
\dim\frac{\{(y_1,\ldots, y_k, 0, \ldots, 0 ) \in \mathbb C^{\beta_1-1}\}}
{\{(y_1,\ldots, y_{k-1}, 0, \ldots, 0 ) \in \mathbb C^{\beta_1-1}\}}=1,
\end{align*}
if $1\leq k\leq \beta_1-1$, and they are zero otherwise. So, $B=(1,\ldots,1)\in\N^{\beta_1-1}$.

For the Weyr characteristic of multi-shifts, note that \eqref{Scooter} and \eqref{eins2} imply
\begin{align*}
R(T^k)+\Rc_{0}(T) = \C^{n_{0}}\oplus\C^{\alpha}
\oplus\C^{\beta_1 -1}\oplus R(T_\gamma^k).
\end{align*}
Hence, the entries of $C$ are given by
\begin{align*} 
C_1 & = \dim\frac{R(T)+\dom T}{R(T)+\Rc_{0}(T)}=\dim\frac{\C^{n_{0}}\oplus\C^{\alpha}
\oplus\C^{\beta_1 -1}\oplus \big(R(T_\gamma) +\dom(T_\gamma)\big)}{\C^{n_{0}}\oplus\C^{\alpha}
\oplus\C^{\beta_1 -1}\oplus R(T_\gamma)}=1,\\
C_k & =\dim\frac{R(T^k)+\Rc_{0}(T)}{R(T^{k+1})+\Rc_{0}(T)}=
\dim\frac{\{(0, \ldots, 0, x_{1},\ldots, x_{\gamma_1 -k}) \in \mathbb C^{\gamma_1+m_\gamma} \}}{\{(0, \ldots, 0, x_{1},\ldots, x_{\gamma_1 -k-1}) \in \mathbb C^{\gamma_1+m_\gamma}\}}=1,
\end{align*}
if $2\leq k\leq \gamma_1-1$, and they are zero otherwise. Thus, $C=(1,\ldots,1)\in\N^{\gamma_1-1}$. Comparing the obtained $(W,A,B,C)$ with \eqref{Jungle} we complete the proof of this case.

%and the finite sequence $C$ is of length $\gamma_1 -1$ and of the form $C=(1,\ldots,1)$. 
\emph{Case 2. }\ Assume that $\beta=(\beta_1)$ with $\beta_1>1$ or $\gamma=(\gamma_1)$ with $\gamma_1>1$. As in the proof of Theorem \ref{prop:KCFtoWeyr}, this follows as in Case 1 with the condition $m_\beta=0$ or $m_\gamma=0$, respectively.

\emph{Case 3. }\ Assume that $\beta = (1,\ldots , 1)\in\N^{m_\beta}$ for $m_\beta>0$ and also that $\gamma= (\gamma_1,1,\ldots , 1)\in\N^{m_\gamma+1}$  with $\gamma_1>1$ and $m_\gamma\geq 0$. Then the  Kronecker canonical form  
of the pencil $sE-F$ has the form \eqref{camiloTutu} or \eqref{camiloTutuII}. In both cases
\eqref{Heino} induces a reducing sum decomposition with $T_\beta=\{0\}$. Hence, we can proceed as
 in Case 1 to prove the statement.

\emph{Case 4. }\ Assume that $\beta= (\beta_1,1,\ldots , 1)\in\N^{m_\beta+1}$ with $\beta_1>0$ and $\gamma = (1,\ldots , 1)\in\N^{m_\gamma}$ for some $m_\gamma>0$.
Then the Kronecker canonical form \eqref{KCF} 
of the pencil $sE-F$ is of the form
\eqref{Derulo4} and we obtain the following reducing sum  decomposition 
 \begin{align*}
 FE^{-1}= R\begin{bmatrix} 
 I\\J_0
 \end{bmatrix}
 \oplus 
  R\begin{bmatrix} 
 N_\alpha\\I_\alpha
 \end{bmatrix}
 \oplus
   R\begin{bmatrix} 
 K_\beta\\0_{m_\gamma\times |\beta|} \\ L_\beta\\0_{m_\gamma\times |\beta|}
 \end{bmatrix},
\end{align*}
where $0_{m_\gamma\times |\beta|}$ stands for the zero matrix with
$m_\gamma$ rows and  $|\beta|$ columns. Then the proof of this case is similar to the proof of Case 1.

\emph{Case 5. }\ Assume that $\beta =(1,\ldots , 1)\in\N^{m_\beta}$ for $m_\beta>0$ and $\gamma = (1,\ldots , 1)\in\N^{m_\gamma}$ for $m_\gamma>0$. Again, if one (or both) of the first two entries of the KCF in \eqref{KCF} is (are) present, then it is of the form 
\begin{align*}
\begin{bmatrix} sI_{n_0}-J_0& 0& 0 \\
 0& sN_\alpha-I_{\alpha}& 0 \\ 
 0&0&  0_{m_\gamma \times m_\beta } 
\end{bmatrix}.
\end{align*}  
Then the reducing sum decomposition \eqref{FalklandArgentina} is of the form $T=FE^{-1}= T_{n_0}\oplus T_\alpha \oplus T_\gamma$, where $T_\gamma=\{(0,0)\}^{m_\gamma\times m_\gamma}$ because $L_\gamma$ and $K_\gamma$ are zero matrices of appropriate sizes. Since $\Rcc(T)=\{0\}^{n_0+\alpha +m_\gamma}$, we have that $B_k=0$ for every $k\geq 1$. Also, $\Rc_0(T)+ R(T^k)=\dom T + R(T)=\C^{n_0}\oplus \C^\alpha\oplus\{0\}^{m_\gamma}$ implies that $C_k=0$ for each $k\geq 1$.

Finally, if the two first entries are not present then the KCF equals $0_{m_\gamma \times m_\beta}$ and the reduced sum decomposition is just $FE^{-1}= T_\gamma=\{(0,0)\}^{m_\gamma\times m_\gamma}$. Once again, it is easy to see that $B_k=C_k=0$ for every $k\geq 1$.
\end{proof}

%In Theorem \ref{prop:KCFtoWeyr2}
 %it is shown how the Weyr characteristic of a matrix pencil $(w,a,b,c)$
%determines the Weyr characteristic $(W,A,B,C)$ of its range representation.
%In order to reconstruct the Weyr characteristic of a matrix pencil from
%its range representation, it is necessary to know not only the Weyr characteristic of the linear relation
%but also to know the sizes of the matrices in the range representation. This is 
%the content of the following proposition.

We end this section with versions of Proposition \ref{prop:correspondence} and Corollary \ref{minrow} corresponding to the range representation of a matrix pencil. The proofs are omitted since they use similar arguments.

\begin{prop}
Let $S$ be a linear relation in $\mathbb C^{n}$ with Weyr characteristic $(W,A,B,C)$. If there exist matrices  $E,F\in\C^{n\times m}$ such that the range representation $FE^{-1}$ of the matrix pencil $P(s)=sE-F$ coincides with $S$, then the Weyr characteristic $(w,a,b,c)$ of the pencil $P(s)$ is given by
\begin{equation*}
w=W, \quad a=A,
\end{equation*}
and if $B=( B_1,\ldots, B_{n_B})$ and $C=( C_1,\ldots, C_{n_C})$, then
\begin{align*}
b &=(n - |W| - |A| - (|B|-B_1) - |C|,B_1,\ldots, B_{n_B}), \quad \text{and} \\
c &=(m - |W| - |A| - |B| - (|C|-C_1), C_1,\ldots, C_{n_C}).
\end{align*}
%The numbers $n_1$ and $n_2$
%are given by
%\begin{align*}
%\begin{split}
%n_2:= m - \sum_{l=1}^k |W(\lambda_l)| - |A| - |B| - (|C|-C_1),\\
%n_1:= n - \sum_{l=1}^k |W(\lambda_l)| - |A| - (|B|-B_1) - |C|.
%\end{split}
%\end{align*}
\end{prop}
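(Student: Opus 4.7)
The plan is to mirror the proof of Proposition \ref{prop:correspondence}, substituting the range representation for the kernel representation. As a first step I would establish the range-representation analogue of Proposition \ref{casi}: if two pencils are related by $E_2 = UE_1V$ and $F_2 = UF_1V$ with invertible $U\in\C^{n\times n}$ and $V\in\C^{m\times m}$, then
\begin{align*}
F_2E_2^{-1} \;=\; \{(UE_1Vx,\,UF_1Vx) : x\in\C^m\} \;=\; \begin{bmatrix} U & 0 \\ 0 & U \end{bmatrix}\cdot (F_1E_1^{-1}),
\end{align*}
so the two range representations are strictly equivalent as linear relations in $\C^n$, and by Lemma \ref{lem:invariance} they share a Weyr characteristic. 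Hence I may assume from the outset that $P(s)$ is in Kronecker canonical form.

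With that reduction in place, Theorem \ref{prop:KCFtoWeyr2} immediately yields $w=W$, $a=A$, together with $B=(b_2,\ldots,b_{n_b})$ and $C=(c_2,\ldots,c_{n_c})$. Writing $b = (b_1, B_1, \ldots, B_{n_B})$ and $c = (c_1, C_1, \ldots, C_{n_C})$, only the leading entries $b_1 \geq B_1$ and $c_1 \geq C_1$ remain to be pinned down, and I have $|b| = b_1 + |B|$ as well as $|c| = c_1 + |C|$. These two unknowns are determined by the row/column count formulas in \eqref{Norton}: substituting the above,
\begin{align*}
n &= |w|+|a|+(|b|-b_1)+|c| = |W|+|A|+|B| + c_1 + |C|, \\
m &= |w|+|a|+|b|+(|c|-c_1) = |W|+|A| + b_1 + |B|+|C|,
\end{align*}
a two-by-two linear system whose solution is
\begin{align*}
c_1 \;=\; n - |W| - |A| - |B| - |C|, \qquad b_1 \;=\; m - |W| - |A| - |B| - |C|.
\end{align*}
Inserting these back into the tuples $b$ and $c$ completes the description of the Weyr characteristic $(w,a,b,c)$ of $P(s)$.

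The structure of the argument is essentially forced by Theorem \ref{prop:KCFtoWeyr2}, so I do not expect a real obstacle beyond careful bookkeeping. The only item deserving verification a posteriori is that the recovered leading entries satisfy the monotonicity constraints $b_1 \geq B_1$ and $c_1 \geq C_1$; this must hold whenever matrices $E,F \in \C^{n\times m}$ with $FE^{-1} = S$ exist, since then the Kronecker canonical form of $P(s)$ is a genuine pencil whose conjugate-partition invariants are non-increasing by construction.
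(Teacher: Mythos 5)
Your strategy is exactly the one the paper intends: the proof of this proposition is omitted with the remark that it uses the same arguments as Proposition \ref{prop:correspondence}, and that is precisely what you do --- reduce to Kronecker canonical form via strict equivalence of the range representations (your analogue of Proposition \ref{casi} is correct), invoke Theorem \ref{prop:KCFtoWeyr2} to get $W=w$, $A=a$, $B=(b_2,\ldots,b_{n_b})$, $C=(c_2,\ldots,c_{n_c})$, and then solve the two counting identities \eqref{Norton} for the missing leading entries $b_1$ and $c_1$.

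However, you end by asserting that inserting your values ``completes the description'' without comparing them to the formulas you were asked to prove, and they do not agree. Your system yields
\begin{equation*}
b_1 = m - |W| - |A| - |B| - |C|, \qquad c_1 = n - |W| - |A| - |B| - |C|,
\end{equation*}
whereas the statement asserts $b_1 = n - |W| - |A| - (|B|-B_1) - |C|$ and $c_1 = m - |W| - |A| - |B| - (|C|-C_1)$. Substituting $n$ and $m$ from \eqref{Norton} into the stated formulas gives $b_1 = B_1 + c_1$ and $c_1 = b_1 + C_1$, which forces $B_1 = C_1 = 0$; so the printed formulas cannot be correct in general. A concrete check: in Example 4.3 of the paper one has $n=1$, $m=4$, $(W,A,B,C)=\big((\,),(\,),(1),(\,)\big)$ and the true value is $b=(3,1)$; the stated formula produces leading entry $1$, while yours produces $3$. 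The same happens with $c$ in Example 4.4. In short, your computation is right and your final formulas are the correct ones (the statement as printed appears to contain a transcription error, with the roles of $n$ and $m$ and the correction terms garbled), but a complete solution must point out this discrepancy explicitly rather than claim to have verified the printed identities. Your closing remark on the a posteriori constraints $b_1\ge B_1$, $c_1\ge C_1$ is appropriate and is the same observation that underlies the minimality assertion in the subsequent corollary.
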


%\begin{proof}
%The proof is analogous to the proof of Proposition \ref{prop:correspondence} and therefore omitted.
%we can assume without that $sE-A$ is already in Kronecker canonical from.
%Proposition \ref{prop:KCFtoWeyr2} implies that
%\begin{equation*}
 %\begin{bmatrix}
%w(\lambda_1)\\
%\vdots \\
%w(\lambda_k)\\
%a
%\end{bmatrix}=
%\begin{bmatrix}
%W(\lambda_1)\\
%\vdots \\
%W(\lambda_k)\\
%A
%\end{bmatrix}.
%\end{equation*}
 %We know from Proposition \ref{prop:KCFtoWeyr2} that we have to extend
 %$B$ and $C$ by one entry to obtain $a$ and $b$.
 %The missing entries $b_1$ and $c_1$ are given by $B_1$ ($C_1$) plus the number
 %of missing columns $n_2$ (missing rows $n_1$, respectively).
%\end{proof}

\begin{cor}%\label{minrow2}
Let $S$ be a linear relation in $\mathbb C^{n}$ and assume that its Weyr characteristic is $(W,A,B,C)$. Then
\begin{align*}
n=|W|+|A|+|B|+|C|
\end{align*}
is the smallest possible number of rows for matrices $E,F\in\C^{n\times m}$ such that the range representation $FE^{-1}$ of the matrix pencil $P(s)=sE-F$ coincides with $S$. In this case, the Weyr characteristic $(w,a,b,c)$ of the pencil $P(s)$ is given by
\begin{eqnarray*}
w=W, \quad a=A,
\end{eqnarray*}
and if $B=( B_1,\ldots, B_{n_B})$ and $C=( C_1,\ldots, C_{n_C})$, then
\begin{align*}
b &=(B_1,B_1,B_2,\ldots, B_{n_B}), \quad \text{and} \\
c &=(m - |W| - |A| - |B| - (|C|-C_1), C_1,C_2,\ldots, C_{n_C}).
\end{align*}

\end{cor}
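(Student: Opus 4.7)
The plan is to mirror the proof of Corollary \ref{minrow} but with the roles of $n$ and $m$ reversed, since the range representation lives in $\C^n\times\C^n$ rather than $\C^m\times\C^m$.

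Paragraph 1 (Apply the preceding proposition). Suppose $E,F\in\C^{n\times m}$ satisfy $FE^{-1}=S$. By the immediately preceding proposition, the Weyr characteristic $(w,a,b,c)$ of $P(s)=sE-F$ is forced to be $w=W$, $a=A$, with
\[
b=\bigl(n-|W|-|A|-(|B|-B_1)-|C|,\,B_1,\ldots,B_{n_B}\bigr),\quad c=\bigl(m-|W|-|A|-|B|-(|C|-C_1),\,C_1,\ldots,C_{n_C}\bigr).
\]
The crucial observation is that $b$, being a Weyr characteristic, must be a non-increasing sequence; in particular $b_1\geq b_2=B_1$.

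Paragraph 2 (Derive the lower bound). Writing out $b_1\geq B_1$ yields
\[
n-|W|-|A|-(|B|-B_1)-|C|\geq B_1,
\]
which simplifies to $n\geq |W|+|A|+|B|+|C|$. This establishes the lower bound on the number of rows. Substituting $n=|W|+|A|+|B|+|C|$ into the formula for $b$ forces $b_1=B_1$, giving $b=(B_1,B_1,B_2,\ldots,B_{n_B})$, and the formula for $c$ is unchanged, yielding the displayed expressions.

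Paragraph 3 (Attainment). It remains to exhibit matrices $E,F\in\C^{n\times m}$ with $n=|W|+|A|+|B|+|C|$ realizing $S$. For any $m\geq |W|+|A|+|B|+|C|$ (ensuring $c_1\geq C_1$), construct a pencil $\wt P(s)=s\wt E-\wt F$ in Kronecker canonical form with the prescribed invariants $(w,a,b,c)=(W,A,(B_1,B_1,B_2,\ldots),(m-|W|-|A|-|B|-(|C|-C_1),C_1,C_2,\ldots))$; the total row count of such a pencil is exactly $n=|W|+|A|+|B|+|C|$ by the row-count identity used in the preceding proof. By Theorem \ref{prop:KCFtoWeyr2}, the range representation $\wt F\wt E^{-1}$ has Weyr characteristic $(W,A,B,C)$, matching that of $S$. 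Invoking the range-representation analog of Theorem \ref{Weeknd} (alluded to by the paper's remark that the omitted arguments parallel those for the kernel case), $\wt F\wt E^{-1}$ is strictly equivalent to $S$ via some invertible $T\in\C^{n\times n}$; setting $E:=T\wt E$ and $F:=T\wt F$ gives $FE^{-1}=T\wt F\wt E^{-1}T^{-1}=S$, with $E,F\in\C^{n\times m}$.

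Main obstacle. The inequality argument in Paragraphs 1--2 is routine once one trusts the preceding proposition. The genuinely substantive step is the attainment in Paragraph 3, where one needs the analog of the strict-equivalence classification (Theorem \ref{Weeknd}) for the range representation. Accepting that analog, the whole argument is algebraic bookkeeping; without it, one would have to construct $E,F$ directly from the canonical blocks and verify by hand that $FE^{-1}=S$ up to a change of basis in $\C^n$.
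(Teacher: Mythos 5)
The paper gives no proof of this corollary (it is explicitly omitted), so the natural benchmark is the proof of the kernel analogue, Corollary \ref{minrow}. Structurally you mirror that proof faithfully, and your Paragraph 3 even supplies the attainment step that the paper's kernel-case argument leaves implicit, which is an improvement. One small remark there: Theorem \ref{Weeknd} is a statement about abstract linear relations and applies verbatim to $\wt F\wt E^{-1}$ and $S$ once both live in $\C^n$; no separate ``range-representation analog'' is needed.

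The genuine problem is in Paragraphs 1--2: the lower bound rests entirely on the formula $b_1=n-|W|-|A|-(|B|-B_1)-|C|$ quoted from the preceding proposition, and that formula is inconsistent with the paper's own bookkeeping. The $\beta$-blocks of the Kronecker form occupy $|b|$ columns and $|b|-b_1$ rows, and Theorem \ref{prop:KCFtoWeyr2} gives $|b|=b_1+|B|$; inserting this into the column count in \eqref{Norton} yields $m=|W|+|A|+b_1+|B|+|C|$, i.e.\ $b_1=m-|W|-|A|-|B|-|C|$, so $b_1$ is governed by the number of \emph{columns}. (Test this on the first example of Section \ref{sec:examples}: $\beta=(2,1,1)$, $n=1$, $m=4$, $b=(3,1)$, $B=(1)$; the proposition's formula gives $b_1=1$, the column-count identity gives $b_1=3$, and $3$ is the correct value.) Symmetrically, $c_1=n-|W|-|A|-|B|-|C|$ is the entry governed by the number of rows. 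Consequently the monotonicity constraint $b_1\ge b_2=B_1$ that you invoke bounds $m$, not $n$, while the constraint that actually restricts $n$ is $c_1\ge c_2=C_1$, which yields $n\ge |W|+|A|+|B|+|C|+C_1$ rather than the value claimed in the corollary. So, read against the corrected identities, your Paragraph 2 establishes a bound on the wrong variable, and the whole derivation is only as sound as the range version of Proposition \ref{prop:correspondence}, which the paper's own examples show cannot be taken at face value. Before building on that proposition you should verify it against \eqref{Norton} and the examples, and then redo the minimization with the corrected expressions for $b_1$ and $c_1$.
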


\section{Rank one perturbations of matrix pencils via linear relations}\label{sec:pert}

 There is a lot of recent work on low-rank perturbations of matrix pencils, see 
 e.g.\ \cite{Batz14,DD07,DMT08,DD16} for generic perturbation results and 
 \cite{BaraDodi20,BaraRoca19,BaraRoca20,DodiStos20,GT17} for non-generic 
 perturbation results. 
 The  above  mentioned  non-generic  results  are  
 mainly  based  on  the  use  
 of invariant factors and the concept of majorization of finite sequences. 
 
 Here we follow a different approach.
 Our goal is to obtain perturbation results for matrix pencils via the Weyr characteristic of their kernel 
 or range representations. 
%The associated kernel and range representations are linear relations with a certain Weyr characteristic.
 Due to the results developed in the Sections \ref{sec:weyr_kernel} and \ref{sec:weyr_range} these Weyr characteristics can be deduced
 from the Weyr characteristic of the underlying, unperturbed pencil.
Recent results for perturbations of linear 
 relations \cite{LMPPTW18} describe
 the change of the  Weyr characteristic under perturbations, and, again due to 
 Sections \ref{sec:weyr_kernel} and \ref{sec:weyr_range},
 allow to give bounds for the change of the Weyr characteristic of the perturbed matrix pencil.
 The choice whether to use kernel or range
 representation depends on the appearance of the rank-one pencil. 

Along this section we consider two matrix pencils,
\begin{align*}
P(s):= sE-F \quad \text{and} \quad Q(s):= sG-H,
\end{align*}
where $E,F,G,H\in\C^{n\times m}$, and assume that they are a rank one perturbation of each other, i.e.\
\begin{align*}%\label{rankone}
\rank(P-Q)=1.
\end{align*}
Every rank one pencil in $\C^{n\times m}$ can be written in one of the following ways:
%are either of the  form
\begin{eqnarray}\label{adin}
& (su-v)w^* &\mbox{ or}
\end{eqnarray}
\begin{eqnarray}\label{dwa}
&  w(su^*-v^*),
\end{eqnarray}
where  $(u,v)\in\C^{m}\times\C^m$, $(u,v)\neq (0,0)$ and  $w\in\C^n$, $w\neq 0$, see \cite{GT17}.

The following definition, taken from \cite{ABJT}, specifies the idea of a rank-one perturbation for linear relations.

\begin{defi}\label{laplata16}
Let $S$ and $T$ be linear relations in a vector space $X$. Then $S$ is called a {\em rank one perturbation} of $T$ (and vice versa) if
\begin{equation*}%\label{SoIssEs}
\max\left\{\dim\frac{S}{S\cap T},\,\dim\frac T{S\cap T}\right\} = 1.
\end{equation*}
%In particular, $A$ is called a {\em one-dimensional extension} of $B$ if $B\subset A$ %and $\dim(A/B) = 1$.
\end{defi}

Before returning to matrix pencils, we prove a simple result on rank one perturbations of linear relations.

\begin{lemma}\label{utiles}
Let $S$ and $T$ be linear relations in an $m$-dimensional vector space $X$. Then, the following conditions are equivalent:
\begin{itemize}
\item[\rm (a)] $S$ is a rank one perturbation of $T$;
\item[\rm (b)] $S^{-1}$ is a rank one perturbation of $T^{-1}$;
\item[\rm (c)] $S^\bot$ is a rank one perturbation of $T^\bot$.
\end{itemize}
\end{lemma}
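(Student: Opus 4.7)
The plan is to prove (a) $\Leftrightarrow$ (b) and (a) $\Leftrightarrow$ (c) separately, each via a general dimension identity for the quantity $\dim S/(S\cap T)$.

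For (a) $\Leftrightarrow$ (b) I would argue that the coordinate-swap map $\sigma:X\times X\to X\times X$, $\sigma(x,y):=(y,x)$, is a linear involution on $X\times X$ that sends every linear relation $R$ in $X$ to $R^{-1}$. Since $\sigma$ is a vector space isomorphism, it preserves inclusion, intersection, sum, and dimension. In particular, $\sigma(S\cap T)=S^{-1}\cap T^{-1}$ and $\sigma$ induces an isomorphism of quotient spaces
\begin{equation*}
\frac{S}{S\cap T}\,\cong\,\frac{S^{-1}}{S^{-1}\cap T^{-1}},\qquad\frac{T}{S\cap T}\,\cong\,\frac{T^{-1}}{S^{-1}\cap T^{-1}}.
\end{equation*}
So the two maxima in Definition~\ref{laplata16} coincide, giving (a) $\Leftrightarrow$ (b).

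For (a) $\Leftrightarrow$ (c) I would use the standard orthogonal-complement identity $(U+V)^\bot=U^\bot\cap V^\bot$ in the finite-dimensional inner product space $X\times X$, together with $\dim U+\dim U^\bot=2m$. The key computation is
\begin{equation*}
\dim\frac{T^\bot}{S^\bot\cap T^\bot}=\dim T^\bot-\dim(S+T)^\bot=\dim(S+T)-\dim T=\dim\frac{S}{S\cap T},
\end{equation*}
where the last equality is the second isomorphism theorem $S/(S\cap T)\cong (S+T)/T$. A symmetric calculation yields $\dim S^\bot/(S^\bot\cap T^\bot)=\dim T/(S\cap T)$. Hence
\begin{equation*}
\max\!\left\{\dim\tfrac{S^\bot}{S^\bot\cap T^\bot},\,\dim\tfrac{T^\bot}{S^\bot\cap T^\bot}\right\}=\max\!\left\{\dim\tfrac{T}{S\cap T},\,\dim\tfrac{S}{S\cap T}\right\},
\end{equation*}
proving (a) $\Leftrightarrow$ (c).

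There is no serious obstacle; the whole argument is bookkeeping with two standard linear-algebra identities and an involution on $X\times X$. The only thing one needs to be a bit careful about is recording that $S,T,S\cap T$ and $S+T$ all live in the $2m$-dimensional ambient space $X\times X$, so that the duality formula $\dim U+\dim U^\bot=2m$ and the identity $(U+V)^\bot=U^\bot\cap V^\bot$ apply uniformly to the four relations in question.
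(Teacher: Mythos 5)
Your argument is correct and follows essentially the same route as the paper: both reduce the statement to the dimension identities $\dim S/(S\cap T)=\dim S^{-1}/(S^{-1}\cap T^{-1})$ and $\dim T^\bot/(S^\bot\cap T^\bot)=\dim S/(S\cap T)$, the first via the fact that inversion is a dimension-preserving isomorphism of $X\times X$ compatible with intersections, the second via $(S+T)^\bot=S^\bot\cap T^\bot$ (where $+$ must be read as the subspace sum in $X\times X$, not the operator-like sum of relations) together with $\dim U+\dim U^\bot=2m$. Your explicit observation that orthogonal complementation swaps the two quotient dimensions, so that only the \emph{maximum} in Definition~\ref{laplata16} is preserved, is precisely the point the paper's chain of equalities relies on implicitly.
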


\begin{proof}
In order to prove the equivalences, it suffices to show that
\begin{align*}
\dim\left(\frac{S}{S\cap T}\right) &=\dim \left(\frac{S^{-1}}{S^{-1}\cap T^{-1}}\right)=\dim \left(\frac{T^\bot}{S^\bot\cap T^\bot}\right). %\quad \text{and} \\
%\dim\left(\frac{T}{S\cap T}\right) &=\dim \left(\frac{T^{-1}}{S^{-1}\cap T^{-1}}\right)=\dim \left(\frac{S^\bot}{S^\bot\cap T^\bot}\right), 
\end{align*}
for two linear relations $S$ and $T$ in $X$. It is easy to see that $(S\cap\ T)^{-1}=S^{-1}\cap T^{-1}$ and also $\dim S^{-1}= \dim S$. Hence,
\begin{align*}
\dim \left(\frac{S^{-1}}{S^{-1}\cap T^{-1}}\right) &=\dim S^{-1} - \dim \big(S^{-1}\cap T^{-1}\big) = \dim S - \dim \big((S\cap T)^{-1}\big) \\ &= \dim S - \dim (S\cap T)= \dim\left(\frac{S}{S\cap T}\right).
\end{align*}
Since $X$ is finite dimensional, we have that 
$$
S^\bot\cap T^\bot=\left\{(x_1+x_2, y_1+y_2) \,:\, (x_1,y_1) \in S, (x_2,y_2)\in T \right\}^\bot,
$$
and $\dim S^\bot\cap T^\bot=2m- \dim S - \dim T + \dim S\cap T$. Hence,
\begin{align*}
\dim \left(\frac{T^\bot}{S^\bot\cap T^\bot}\right) &= \dim T^\bot - \dim S^\bot\cap T^\bot\\
%& = \dim S^\bot -(2m - \dim \left\{(x_1+x_2, y_1+y_2) \,:\, (x_1,y_1) \in S, (x_2,y_2)\in T \right\})\\
&= \dim T^\bot -(2m - \dim S - \dim T + \dim S\cap T) \\
&= \dim S -  \dim S\cap T =\dim\left(\frac{S}{S\cap T}\right).
\end{align*}
%Changing the roles of $S$ and $T$, $\dim\left(\frac{T}{S\cap T}\right) =\dim \left(\frac{T^{-1}}{S^{-1}\cap T^{-1}}\right)=\dim \left(\frac{S^\bot}{S^\bot\cap T^\bot}\right)$.
%Therefore, the equivalences follow from Definition \ref{laplata16}.
%$T$ is a rank one perturbation of $S$ if and only if $T^\bot$ is a rank one perturbation of $S^\bot$.
\end{proof}

%\smallskip

In the following lemma  we show that rank one perturbations of matrix pencils can always be reformulated as rank one  perturbations of their kernel or of their range representation.

\begin{lemma}\label{Miranda}
Let $P(s)=sE-F$ and $Q(s)=sG-H$ be matrix pencils in $\mathbb C^{n\times m}$.
\begin{itemize}
\item[\rm (a)]
If $(Q-P)(s)$ is a rank one matrix pencil as in \eqref{dwa}, %Let $ (su-v)w^*$ be
then %the corresponding range representations
$$
FE^{-1} \quad \mbox{and} \quad HG^{-1},
%(A+vw^*) \left( E+uw^*\right)^{-1}
$$
either coincide or they are rank one perturbations of each other.
%in the sense of Definition \ref{laplata16}.
\item[\rm (b)]
If $(Q-P)(s)$ is a rank one matrix pencil as in \eqref{adin}, %=w(su^*-v^*)
then 
%the kernel representations corresponding to the unperturbed and the perturbed pencil
$$
E^{-1}F \quad \mbox{and} \quad  G^{-1}H,%\left(E+wu^*\right)^{-1}(A+wv^*)
$$
either coincide or they are rank one perturbations of each other.
%in the sense of Definition \ref{laplata16}.
\end{itemize}
\end{lemma}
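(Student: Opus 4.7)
The plan is to reduce both parts to a single elementary principle about matrix perturbations: if two matrices $M_1, M_2$ of the same size satisfy $M_2 - M_1 = ab^*$ for some nonzero column vector $a$ and row vector $b^*$, then $R(M_1), R(M_2)$ either coincide or are rank-one perturbations of each other, and the same is true for $N(M_1), N(M_2)$. To verify the range version, I would split the domain as $\ker(b^*) \oplus \C x_0$ for some $x_0 \notin \ker(b^*)$; since $M_1$ and $M_2$ agree on $\ker(b^*)$, the common subspace $M_1(\ker b^*) = M_2(\ker b^*)$ lies inside $R(M_1) \cap R(M_2)$, and each of $R(M_1), R(M_2)$ exceeds it by at most one dimension (namely $\C M_j x_0$). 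For the null-space version, the identity $N(M_1) \cap N(M_2) = N(M_j) \cap \ker(b^*)$ (using $a \neq 0$) together with the codimension-one property of $\ker(b^*)$ in $\C^q$ gives the analogous conclusion.

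To apply this principle in part (a), I would use the matrix realizations $FE^{-1} = R\left[\begin{smallmatrix}E\\F\end{smallmatrix}\right]$ and $HG^{-1} = R\left[\begin{smallmatrix}G\\H\end{smallmatrix}\right]$ and translate the hypothesis $(Q-P)(s) = w(su^* - v^*)$ into the matrix identities $G - E = wu^*$ and $H - F = wv^*$. For part (b), the realizations $E^{-1}F = N[F,\,-E]$ and $G^{-1}H = N[H,\,-G]$ reduce the task to analyzing the null-space perturbation arising from $G - E = uw^*$ and $H - F = vw^*$, obtained from $(Q-P)(s) = (su-v)w^*$. In each case, the key step is to express the difference of the relevant stacked or concatenated matrix as a suitable outer product, with the shared factor $w$ or $w^*$ arising from the pencil decomposition playing a central role.

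The main technical point I expect to wrestle with is the explicit factorization of the matrix difference into the required rank-one form: the shared $w$ between the two blocks of the stacked matrix (in part (a)) and the shared $w^*$ between the two blocks of the concatenated matrix (in part (b)) are the natural candidates, and once such a factorization is produced, the general principle yields the conclusion. As an alternative route, Lemma \ref{utiles} permits replacing any of the relations by its inverse or orthogonal complement, allowing one to swap between range and null-space viewpoints via duality; this might simplify whichever of the two directions presents more computational friction.
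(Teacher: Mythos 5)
Your overall strategy --- reduce both parts to the elementary fact that two matrices differing by an outer product $ab^*$ have ranges (resp.\ null spaces) that either coincide or are rank-one perturbations of each other, applied to the stacked matrix $\left[\begin{smallmatrix}E\\F\end{smallmatrix}\right]$ for the range representation and to the concatenated matrix $[F,-E]$ for the kernel representation --- is sound, and for the range case it is essentially the paper's own argument (the paper observes that $\left[\begin{smallmatrix}G\\H\end{smallmatrix}\right]z=\left[\begin{smallmatrix}E\\F\end{smallmatrix}\right]z$ whenever $z$ annihilates the shared row factor, so $HG^{-1}$ exceeds $HG^{-1}\cap FE^{-1}$ by at most the span of one extra pair). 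The paper then obtains the kernel case by passing to orthogonal complements, which turns kernel representations into range representations of adjoint pencils, and invoking Lemma \ref{utiles}; your direct null-space argument would be a legitimate alternative there.

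The gap sits exactly at the step you flag as the crux: the outer-product factorizations you propose do not exist, because you have paired each representation with the wrong rank-one form. For the range representation you take $G-E=wu^*$, $H-F=wv^*$ with the \emph{left} factor $w\in\C^n$ shared; then $\left[\begin{smallmatrix}G\\H\end{smallmatrix}\right]-\left[\begin{smallmatrix}E\\F\end{smallmatrix}\right]=\left[\begin{smallmatrix}wu^*\\wv^*\end{smallmatrix}\right]$ has rank two whenever $u$ and $v$ are linearly independent, and the conclusion genuinely fails: with $E=F=0$ in $\C^{2\times 2}$, $w=u=e_1$, $v=e_2$, one gets $FE^{-1}=\{(0,0)\}$ while $HG^{-1}=\Span\{(e_1,0),(0,e_1)\}$ is two-dimensional, so these are not rank-one perturbations of each other in the sense of Definition \ref{laplata16}. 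Symmetrically, for the kernel representation you take $G-E=uw^*$, $H-F=vw^*$ with the \emph{right} factor shared, and then $[H,-G]-[F,-E]=[vw^*,-uw^*]$ again has rank two in general. The pairing must be the opposite one: the form with shared row factor $w^*$ acting on the domain ($G-E=uw^*$, $H-F=vw^*$, $w\in\C^m$, $u,v\in\C^n$) goes with the range representation, since then the stacked difference is the genuine outer product $\left[\begin{smallmatrix}u\\v\end{smallmatrix}\right]w^*$; the form with shared column factor $w\in\C^n$ ($G-E=wu^*$, $H-F=wv^*$, $u,v\in\C^m$) goes with the kernel representation, since then $[H,-G]-[F,-E]=w\,[v^*,-u^*]$. (The labels \eqref{adin} and \eqref{dwa} are not used consistently between the displayed formulas, the lemma and the later theorem, which may be what misled you; but the factorization requirement forces the pairing just described, and your write-up commits to the one for which the key step cannot be carried out.)
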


\begin{proof}
We first prove item (a). If $(Q-P)(s)$ is a rank one matrix pencil as in \eqref{dwa}, then $G=E+uw^*$ and $H=F+vw^*$. Hence,
\begin{align*}%\label{Piso21}
HG^{-1}=R\begin{bmatrix}G\\H\end{bmatrix}=R\begin{bmatrix}E+uw^*\\F+vw^*\end{bmatrix},
\end{align*}
i.e. $(x,y)\in HG^{-1}$ if and only if there exists $z\in\C^m$ such that $x=(E+uw^*)z$ and $y=(F+vw^*)z$.

If $w^*z=0$ then $(x,y)\in (HG^{-1})\cap (FE^{-1})$. Therefore,
\begin{align*}
HG^{-1} = (HG^{-1})\cap (FE^{-1})+\Span\{(u_0,v_0)\},
\end{align*}
where $u_0=Ew + (w^*w)u$ and $v_0=Fw + (w^*w)v$.
Thus, the range representations are rank one perturbations of each other, cf.\ Definition \ref{laplata16}.

\smallskip
%
%
%\smallskip

Now, if $(Q-P)(s)$ is a rank one matrix pencil as in \eqref{adin}, then
\begin{align*}
G^{-1}H= \left(E+wu^*\right)^{-1}(F+wv^*)=N[F+wv^*,-E-wu^*].
\end{align*}
Thus, the orthogonal complements of $E^{-1}F$ and $G^{-1}H$ are of the form
\begin{align*}
(E^{-1}F)^\bot=R\begin{bmatrix} F^*\\-E^* \end{bmatrix} \quad \text{and} \quad (G^{-1}H)^\bot=R\begin{bmatrix} F^*+vw^*\\-E^*-uw^* \end{bmatrix},
%\left( N[F+wv^*,-E-wu^*] \right)^{\perp}=
\end{align*}
respectively, i.e.\ they are the range representations of the pencils $\widetilde{P}(s)=sE^*+F^*$ and $\widetilde{Q}(s)=s(F^*+vw^*)+(E^*+uw^*)$, which differ in a rank one perturbation of the form \eqref{dwa}. Applying item (a), they either coincide or they are rank one perturbations of each other. Finally,  by Lemma \ref{utiles} we have that $E^{-1}F$ and $G^{-1}H$ are rank one perturbations of each other.
\end{proof}

The following theorem is the main result of this section. It describes the change of (a part of) the Weyr characteristic  of matrix pencils under rank-one perturbations. Very similar results have been previously obtained for regular pencils in \cite[Proposition~4.1]{GT17}, see also \cite{DMT08,GernTrun21};
%In the case of regular pencils such a result was previously obtained in \cite[Proposition~4.1]{GT17}, see also \cite{DMT08,GernTrun21}.
for singular pencils such a result was given in \cite[Section~7]{LMPPTW18} but only for perturbations of the form \eqref{adin}. 
%It shows how parts of the Weyr characteristics of matrix pencils change under rank one perturbations.
\begin{theorem}
%and $F$ be matrices in $\mathbb C^{n\times m}$. A
Given $E, F\in\C^{n\times m}$, assume that $\lambda \in \mathbb C$ is an eigenvalue of the matrix pencil $P(s)=sE-F$ and
denote by $w(\lambda) =(w_k(\lambda))$, $a=(a_k)$ and $b=(b_k)$
the finite sequences corresponding to the Weyr characteristic $(w,a,b,c)$ of the pencil $P$. % as in \eqref{Eminem}.
Assume also that $Q(s)$ is another matrix pencil in $\mathbb C^{n\times m}$ such that $\rank(P-Q)=1$. Denote by $\widetilde w(\lambda)=(\widetilde w_k(\lambda))$, $\widetilde a=(\widetilde a_k)$  and $\widetilde b=(\widetilde b_k)$ the corresponding parts of the Weyr characteristic $(\widetilde w, \widetilde a, \widetilde b, \widetilde c)$ of the pencil $Q$. 
\begin{itemize}
\item[\rm (a)] If $(Q-P)(s)=w(su^*-v^*)$ as in \eqref{adin} then, for each $k\geq1$, % it holds
%Denote by $\widetilde w(\lambda)=(\widetilde w_k(\lambda))$
 %and $\widetilde b=(\widetilde b_k)$ the corresponding parts of the Weyr
%characteristic of the perturbed pencil $s(E+wu^*)-(F+wv^*)$. Then,
\begin{align*}
|(w_k(\lambda)+b_k)-(\widetilde w_k(\lambda)+\widetilde b_k)|\leq k,\\
|(a_k+b_k)-(\widetilde a_k+\widetilde b_k)|\leq k.
\end{align*}
\item[\rm (b)] If $(Q-P)(s)=(su-v)w^*$ as in \eqref{dwa} then, for each $k\geq 1$, % it holds
\begin{align*}
|(w_k(\lambda)+b_{k+1})-(\widetilde w_k(\lambda)+\widetilde b_{k+1})|\leq k,\\
|(a_k+b_{k+1})-(\widetilde a_k+\widetilde b_{k+1})|\leq k.
\end{align*}
%for each $k\geq2$.
\item[\rm (c)] If $P$ and $Q$ are regular then, for each $k\geq 1$, $b_k=\widetilde b_k=0$  and also %for all $k\geq 1$,
\begin{align*}
|w_k(\lambda)-\widetilde w_k(\lambda)|\leq 1 \quad \text{and}\quad |a_k-\widetilde a_k|\leq 1,
\end{align*}
independently of the form of the rank one pencil $Q-P$.
\end{itemize}
\end{theorem}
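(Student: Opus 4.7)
The plan is to reduce each statement to a rank-one perturbation problem for linear relations and then to invoke the perturbation results of \cite{LMPPTW18}. The key bridge is Lemma \ref{Miranda}: a rank-one pencil of the form $(su-v)w^*$ produces kernel representations $E^{-1}F$ and $G^{-1}H$ that differ (at most) by a rank-one perturbation, while a rank-one pencil of the form $w(su^*-v^*)$ does the same for the range representations $FE^{-1}$ and $HG^{-1}$. This dictates which representation to use for each item: the kernel representation when the perturbation acts by a ``column-type'' rank-one pencil, the range representation when it acts by a ``row-type'' rank-one pencil. (The two cases in the statement should be paired with their natural representation via Lemma \ref{Miranda}; the quantities $w_k+b_k$ and $w_k+b_{k+1}$ encode precisely this choice.)

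\textbf{Items (a) and (b).} Using Theorem \ref{prop:KCFtoWeyr}, the sum $w_k(\la) + b_k$ of the pencil equals $W_k(\la) + B_k$ of its kernel representation, which by Lemma \ref{numbers} coincides with $\dim\bigl(N((S-\la)^k)/N((S-\la)^{k-1})\bigr)$, where $S = E^{-1}F$. Likewise, $a_k + b_k$ equals $\dim\bigl(\mul(S^k)/\mul(S^{k-1})\bigr)$ by Lemma \ref{numbers with A}. For the range representation, Theorem \ref{prop:KCFtoWeyr2} yields the analogous identification but with the shifted sums $w_k(\la) + b_{k+1}$ and $a_k + b_{k+1}$, since $B_k = b_{k+1}$ in that setting. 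Once the quantities are identified as Weyr-type jumps of the underlying linear relation, the perturbation bounds from \cite{LMPPTW18} apply: for a rank-one perturbation $S'$ of $S$, the jumps $\dim\bigl(N((S-\la)^k)/N((S-\la)^{k-1})\bigr)$ and $\dim\bigl(\mul(S^k)/\mul(S^{k-1})\bigr)$ differ from those of $S'$ by at most $k$. Translating back gives the desired inequalities. A minor degenerate case is that Lemma \ref{Miranda} allows the two representations to coincide, in which case the bound is trivially zero.

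\textbf{Item (c): the regular case.} When both $P$ and $Q$ are regular, their Kronecker canonical forms contain no blocks of type $sK_\beta - L_\beta$ or $sK_\gamma^\top - L_\gamma^\top$, so the multi-indices $\beta$ and $\gamma$ are empty and consequently $b_k = \widetilde b_k = 0$ (and $c_k = \widetilde c_k = 0$) for all $k$. In this regime the kernel representation $E^{-1}F$ reduces, modulo strict equivalence, to a direct sum of the graph of the matrix $J_0$ and the inverse $N_\alpha^{-1}$ of a nilpotent matrix, and analogously for $Q$. Rank-one perturbations of such linear relations correspond to rank-one perturbations on the two matrix components (up to a controlled distribution of the perturbation across the sum), and for genuine matrices the classical interlacing theorem of Thompson/Sá for the Weyr characteristic under rank-one perturbations yields the sharper bound $|w_k(\la) - \widetilde w_k(\la)| \leq 1$ and $|a_k - \widetilde a_k| \leq 1$. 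Independence from the form of the rank-one pencil follows because in the regular case both $(su-v)w^*$ and $w(su^*-v^*)$ can be handled symmetrically via the kernel-range duality; alternatively, the statement can be cited directly from \cite[Proposition 4.1]{GT17}.

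\textbf{Main obstacle.} The delicate point is the precise quantitative bound used in the second paragraph: one must verify that the rank-one perturbation of linear relations actually yields the $\leq k$ bound for the $k$-th Weyr jump, rather than a weaker $\leq 2k-1$ bound arising naively from telescoping $\dim N((S-\la)^k)$. This requires the finer statement from \cite{LMPPTW18} where the interlacing-type behavior of successive quotients is established, not merely the rank estimate $\rank((S-\la)^k - (S'-\la)^k) \leq k$. Handling the passage between $S$, $S-\la$, and $S^{-1}$ carefully (so that one result serves both the $w_k+b_k$ and the $a_k+b_k$ estimates) is the other bookkeeping step.
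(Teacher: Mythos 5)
Your proposal follows essentially the same route as the paper: Lemma \ref{Miranda} transfers the rank-one pencil perturbation to a rank-one perturbation of the kernel (resp.\ range) representation, Theorems \ref{prop:KCFtoWeyr} and \ref{prop:KCFtoWeyr2} together with Lemmas \ref{numbers} and \ref{numbers with A} identify $w_k(\la)+b_k$ and $a_k+b_k$ (resp.\ the shifted sums with $b_{k+1}$) with the dimension jumps $\dim\bigl(N((S-\la)^k)/N((S-\la)^{k-1})\bigr)$ and $\dim\bigl(\mul(S^k)/\mul(S^{k-1})\bigr)$, and the $\leq k$ bound is exactly \cite[Theorem 4.5]{LMPPTW18} applied to $S$ and to $S^{-1}$, precisely as in the paper. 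The one small deviation is in item (c): the paper applies \cite[Corollary 4.6]{LMPPTW18} directly to the whole kernel or range representation, whereas your primary argument of distributing the rank-one perturbation over the $J_0$ and $N_\alpha$ blocks would need justification (a rank-one perturbation need not respect that block decomposition), but your fallback of citing the regular-pencil result \cite[Proposition 4.1]{GT17} is legitimate and matches the paper's own remark.
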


\begin{proof}
First we prove (a). Assume that $S:=E^{-1}F$ and $T:=\left(E+wu^*\right)^{-1}(F+wv^*)$ are the kernel representations  corresponding to the unperturbed $P$ and the perturbed pencil $Q$, respectively. Due to  Lemma \ref{Miranda}, they either coincide or they are rank one perturbations of each other. On the one hand, \cite[Theorem 4.5]{LMPPTW18} and Lemma~\ref{numbers} imply that
$$
\left|\big(W_k(S,\lambda)+B_k(S)\big)-\big( W_k(T,\lambda)+B_k(T)\big)\right|\leq k,
$$
for each $k\geq 1$. Now the first statement in item (a) follows from Theorem~\ref{prop:KCFtoWeyr}, because $W_k(S,\lambda)=w_k(\lambda)$, $ W_k(T,\lambda)=\widetilde w_k(\lambda)$, $B_k(S)=b_k$ and $B_k(T)=\widetilde b_k$.

%Denote by $\widetilde W(\lambda)=(\widetilde W_k(\lambda))$, $\widetilde A=(\widetilde A_k)$
 %and $\widetilde B=(\widetilde B_k)$ the corresponding parts of the Weyr
%characteristic of the kernel representation
%$\left(E+wu^*\right)^{-1}(F+wv^*)$ of the
 %perturbed pencil $Q$. Due to  Lemma \ref{Miranda},
%the corresponding kernel representations of $P$ and $Q$
%either coincide or they are rank one perturbations of each other.
%On the one hand, \cite[Theorem 4.5]{LMPPTW18} and Lemma~\ref{numbers} imply that
%$$
%|(W_k(\lambda)+B_k)-(\widetilde W_k(\lambda)+\widetilde B_k)|\leq k+1,
%$$
%for each $k\geq 1$. Now the first statement in item (a) follows from Theorem~\ref{prop:KCFtoWeyr}.

On the other hand, by Lemma~\ref{numbers with A}, $A_k(S)+B_k(S)= \dim\frac{\mul(S^k)}{\mul(S^{k-1})}$ and $A_k(T)+B_k(T)= \dim\frac{\mul(T^k)}{\mul(T^{k-1})}$. Note that $\mul(S^k)=N((S^{-1})^k)$ and $\mul(T^k)=N((T^{-1})^k)$ for every $k\geq 1$. Also, by Lemma \ref{Miranda}, $S^{-1}$ and $T^{-1}$ are rank one perturbations of each other. Then, \cite[Theorem 4.5]{LMPPTW18} implies that
$$
\left|\big(A_k(S)+B_k(S)\big)-\big(A_k(T)+B_k(T)\big)\right|\leq k,
$$
for each $k\geq 1$ and the second statement in item (a) also follows from Theorem~\ref{prop:KCFtoWeyr}.

\smallskip

Item (b) is proved in a similar way, where Theorem~\ref{prop:KCFtoWeyr2} is used instead of Theorem~\ref{prop:KCFtoWeyr}. Assume that $S:=FE^{-1}$ and $T:=(F+vw^*)\left(E+uw^*\right)^{-1}$ are the range representations corresponding to the unperturbed $P$ and the perturbed pencil $Q$, respectively. In this case $S$ and $T$ are again rank one perturbations of each other, see Lemma \ref{Miranda}.
%but 
%$$
%T\subseteq S + \linspan\{(u_0,v_0)\},
%$$
%where $u_0=Ew + (w^*w)u$ and $v_0=Fw + (w^*w)v$. 
Then, applying \cite[Theorem 4.5]{LMPPTW18} and Lemma~\ref{numbers} we get
$$
\left|\big(W_k(S,\lambda)+B_k(S)\big)-\big( W_k(T,\lambda)+B_k(T)\big)\right|\leq k,
$$
for each $k\geq 1$. By Theorem~\ref{prop:KCFtoWeyr2}, we have that $W_k(S,\lambda)=w_k(\lambda)$, $ W_k(T,\lambda)=\widetilde w_k(\lambda)$, $B_k(S)=b_{k+1}$ and $B_k(T)=\widetilde b_{k+1}$, and the first inequality in (b) follows.  

Analogously, we can use \cite[Theorem 4.5]{LMPPTW18} to show that
$$
\left|\big(A_k(S)+B_k(S)\big)-\big(A_k(T)+B_k(T)\big)\right|\leq k, \quad k\geq 1,
$$
and the second inequality follows again from Theorem~\ref{prop:KCFtoWeyr2}.

%The application of Theorem~\ref{prop:KCFtoWeyr2} is also the reason for the index shift in $k$. 
%\marginpar{CT: Sorry, I was not able to understand this argument in item (b). Either we formulate it with $k+1$ or someone is able 
%to explain this more explicit...}
%
\smallskip

Finally, if $P(s)=sE-F$ and $Q(s)=sG-H$ are regular then $\mathcal{R}_c(E^{-1}F)=\mathcal{R}_c(FE^{-1})=\mathcal{R}_c(G^{-1}H)=\mathcal{R}_c(HG^{-1})=\{0\}$ 
and therefore $B_k=\widetilde B_k=0$ for all $k\geq 1$. From \cite[Corollary~4.6]{LMPPTW18} and Lemma~\ref{numbers} we have
\begin{align*}
|W_k(\lambda)-\widetilde W_k(\lambda)|\leq 1 \quad \text{and} \quad |A_k(\lambda)-\widetilde A_k(\lambda)|\leq 1.
\end{align*}
Thus, (c) follows after applying Theorem~\ref{prop:KCFtoWeyr} if $Q-P$ is as in \eqref{adin} and Theorem~\ref{prop:KCFtoWeyr} if $Q-P$ is as in \eqref{dwa}, respectively. 
\end{proof}

\appendix

\section{Invariant characteristics of matrix pencils} \label{sec:kron}

%In this section we introduce the notions of Segre and Weyr characteristics of matrix pencils. The next subsection treats the special case of square matrices.
In the following, we recall some basic definitions and results for matrix pencils and use the opportunity to fix some notation. Given a pair of matrices $E, F\in \mathbb{C}^{n\times m}$, the matrix pencil
\begin{equation*}%\label{pencil0}
P(s):= sE-F,\quad s\in \mathbb C,
\end{equation*}
is called \emph{singular} if one of the following conditions holds: $n\neq m$ or $n=m$ and $\det(P(s)) = 0$ for all $s\in\C$. Otherwise the pencil is called \emph{regular}.

The \emph{rank {\rm (or} normal rank{\rm )}} of the pencil $P$ is the largest possible size of a regular submatrix pencil of $P(s)$. We denote it by $\rank(P)$.

\begin{defi}[\cite{S}]
A complex number $\lambda$ is a \emph{finite eigenvalue} of $P(s)=sE-F$, %$E,F\in\C^{n\times m}$, 
if the rank of the matrix $P(\lambda)$ is less than $\rank(P)$. In addition,
$\infty$ is an eigenvalue of $P$ if zero is an eigenvalue of the dual pencil $sF-E$.
\end{defi}

Assume that $P_1$ and $P_2$ are two matrix pencils. They are called {\em strictly equivalent} if there are invertible matrices $U\in\C^{n\times n}$ and $V\in\C^{m\times m}$ such that $P_2(s) = UP_1(s)V$ for all $s\in\C$.

Recall that every pencil $P(s)=sE-F$ is strictly equivalent to its \textit{Kronecker canonical form}, see e.g.~\cite{BergTren12,BergTren13,G59}. To introduce this form the following notation is used: for
$k\in\N$ consider the square matrices
\begin{align*}
N_k:=\left[\begin{array}{cccc}
0&&&\\
1&0&&\\
&\ddots&\ddots&\\
&&1&0
\end{array}
\right]\in\C^{k\times k}.
\end{align*}
For a multi-index  $\alpha=(\alpha_1,\ldots,\alpha_l)\in\N^l$ with absolute value $|\alpha|:=\sum_{i=1}^l\alpha_i$, define the block diagonal matrix
\begin{align}\label{KissMeMore}
N_{\alpha}:=\diag(N_{\alpha_1},\ldots,N_{\alpha_l})\in\C^{|\alpha|\times |\alpha|}.
\end{align}
On the other hand, for $k\in\N$, $k> 1$, consider the rectangular matrices
\begin{align*}
K_k=\begin{smallbmatrix}
&&&\\
1 & 0 &  & \\
 & \ddots & \ddots &  & \\
 && 1 & 0\\
 \end{smallbmatrix},\
L_k=\begin{smallbmatrix}
&&&\\
0 & 1 &  & \\
 & \ddots & \ddots &  & \\
 && 0 & 1\\
 \end{smallbmatrix} \in\C^{(k-1)\times k}.
\end{align*}
For some multi-index ${\bf \alpha}=(\alpha_1,\ldots,\alpha_l)\in\N^l$
with $\alpha_j>1$ for all $j=1, \ldots , l$, we define the block quasidiagonal matrices
\begin{equation*}%\label{eq:NKLalpha}
\begin{aligned}
K_\alpha:= \diag(K_{\alpha_1},\ldots,K_{\alpha_l}),\
L_\alpha:= \diag(L_{\alpha_1},\ldots,L_{\alpha_l})\in\C^{(|\alpha|-l) \times |\alpha|}.
%E_{\alpha}=&\ \diag(e_{\alpha_1}^{[\alpha_1]},\ldots,e_{\alpha_l}^{[\alpha_l]})\in\R^{|\alpha|,l}.
\end{aligned}
\end{equation*}
One extends the above notion to
multi-indices $\alpha=(\alpha_1,\ldots,\alpha_l)\in\N^l$ where some (but not all) entries are
equal to $1$ by adding a zero column at every position where
an entry in $\alpha$ is equal to one. As an example consider
\begin{align*}
K_{(2,1)}:=  \begin{bmatrix}1 & 0 & 0  \end{bmatrix}
\quad \mbox{and} \quad
K_{(3,2,1,1)} :=
 \begin{bmatrix} 1 & 0 & 0 & 0 & 0&0&0 \\
                 0 & 1 & 0 & 0 & 0&0&0 \\
                 0 & 0 & 0 & 1 & 0&0&0
  \end{bmatrix}.
\end{align*}
The case that all entries in $\alpha$ are equal to one may also occur, for a more
detailed exposition of the corresponding definitions for $K_\alpha$ and $L_\alpha$ see, e.g., \cite{BTW16}. In this case, if $sE_0-F_0$ with  $E_0,F_0\in\C^{n_0\times n_0}$, and $(1,\ldots,1)\in\N^{l}$ then the block diagonal pencil is defined as follows
\begin{equation*}%\label{Derulo3}
\begin{bmatrix}
sE_0-F_0&0\\0&sK_{(1,\ldots,1)}-L_{(1,\ldots,1)}\end{bmatrix}:=
\begin{bmatrix}
sE_0-A_0,0_{n_0\times l}
\end{bmatrix}
\end{equation*}
and, correspondingly,
\begin{equation}\label{Derulo4}
\begin{bmatrix}
sE_0-F_0&0\\
0&sK^\top_{(1,\ldots,1)}-L^\top_{(1,\ldots,1)}\end{bmatrix}:=
\begin{bmatrix}
sE_0-A_0\\
0_{l\times n_0}
\end{bmatrix}.
\end{equation}

%Kronecker proved that any pair of matrices $E,F\in\C^{n\times m}$, or, what is the same,
%any matrix pencil of the form $sE-F$,
%can be transformed into a canonical form, cf.\ \eqref{KCF} below. Nowadays, this form is referred to as the \emph{Kronecker canonical form},
%see, e.g., \cite{BergTren12, BergTren13, G59}. Here we refer to the version in~\cite{G59}.

According to Kronecker \cite{K90}, for a matrix pencil $P(s)=sE-F$ there exist invertible matrices $U\in\C^{m\times m}$ and $V\in\C^{n\times n}$ such that $V(sE-F)U$ has a block quasi-diagonal form
\begin{align}\label{KCF2}
V(sE-F)U=\begin{bmatrix} sI_{n_0}-J_0& 0&0&0 \\0& sN_\alpha-I_{|\alpha|}&0&0\\ 0&0& sK_{\beta}-L_{\beta} &0\\ 0&0&0& sK_{\gamma}^\top-L_{\gamma}^\top\end{bmatrix}
\end{align}
for some $J_0\in\C^{n_0\times n_0}$ in Jordan normal form, which is unique up to a permutation of its Jordan blocks, and multi-indices $\alpha\in\N^{n_\alpha}$, $\beta\in\N^{n_\beta}$, $\gamma\in\N^{n_{\gamma}}$ which are unique up to a permutation of their entries, see~\cite{K90} and also \cite[Chapter XII]{G59} or \cite{KM06}.
For our purposes it is important to sort the entries of the multi-indices and the Jordan blocks in  $J_0$ non-increasingly.

The entries in the Segre characteristic  $s(J_0;\lambda)$ of an eigenvalue $\lambda$ of $J_0$ are called the \emph{degrees of the finite elementary divisors} associated to the matrix pencil $P(s)=sE-F$, see e.g. \cite{BergTren12, G59}. The entries of the multi-index $\alpha$ are called the \emph{degrees of the infinite elementary divisors}.
The \emph{column minimal indices} are defined by $\varepsilon_i=\beta_i-1$ for $i=1,\ldots, n_\beta$, and $\eta_j=\gamma_j-1$ ($j=1,\ldots, n_\gamma$) are called the \emph{row minimal indices}, cf.\ \cite{BergTren12, G59}.

\bibliographystyle{amsplain}

\end{document}